\numberwithin{equation}{section}
\newtheorem{lem}{Lemma}
\newtheorem{rem}{Remark}
\definecolor{pink}{rgb}{1,0.75,0.8}
\definecolor{grey}{rgb}{0.745,0.745,0.745}
\def\delete#1{\iffalse #1 \fi}
\newtheorem{theorem}{\bf Theorem}
\newtheorem{corollary}{\bf Corollary}
\newtheorem{lemma}{\bf Lemma}
\def\bse{\begin{eqnarray*}}
\def\ese{\end{eqnarray*}}
\def\bee{\begin{enumerate}}
\def\eee{\end{enumerate}}
\def\bqe{\begin{eqnarray}}
\def\eqe{\end{eqnarray}}
\def\bed{\begin{description}}
\def\eed{\end{description}}
\def\bei{\begin{itemize}}
\def\eei{\end{itemize}}
\def\pmb#1{\setbox0=\hbox{#1}%
    \kern-.025em\copy0\kern-\wd0
    \kern.05em\copy0\kern-\wd0
    \kern-.025em\raise.0433em\box0 }
\def\pmbh#1#2{\setbox0=\hbox{#1}%
    \setbox1=\hbox{#2}%
    \kern-.025em\copy0\kern-\wd0
    \kern.05em\copy1\kern-\wd0
    \kern-.025em\raise.0433em\box0 }
\def\frac#1#2{{#1\over#2}}
\def\boxit#1{\vbox{\hrule\hbox{\vrule\kern6pt
   \vbox{\kern6pt#1\kern6pt}\kern6pt\vrule}\hrule}}
\def\listing#1{\vskip 4mm\begin{verbatim}\input#1 \vskip 4mm}
\def\thick#1{\hbox{\rlap{$#1$}\kern0.25pt\rlap{$#1$}\kern0.25pt$#1$}}
\def\pmbh{{\pmb h}}
\renewcommand\today{\ifcase\month\or
   Jan\or Feb\or Mar\or Apr\or May\or
   Jun\or Jul\or Aug\or Sep\or Oct\or Nov\or
   Dec\fi
   \space\number\day, \number\year}
\let\hat\widehat
\let\tilde\widetilde
\def\@evenhead{\vbox{\hbox to\textwidth{\footnotesize \hfill
Liang.. \hfill } }}
\def\@oddhead{\vbox{\hbox to \textwidth{\footnotesize\hfill
SCFM \hfill } }}
\def\tit.arg{{\bf From Model Selection to Model Averaging: A Comparison for Nested Linear Models}
}
\def\key.arg{
}
\def\author.arg{
Wenchao Xu and Xinyu Zhang \\
Academy of Mathematics and Systems Science, Chinese Academy of Sciences, Beijing 100190, China\\
{\em xinyu@amss.ac.cn}
}
\def\abst.arg{

}
\begin{document}
\baselineskip=18pt

\thispagestyle{plain}
\thispagestyle{empty} 


\begin{center}
{\Large \tit.arg}

\vskip 6mm

\author.arg
\end{center}

\vskip 3mm
\centerline{\small ABSTRACT} 
Model selection (MS) and model averaging (MA) are two popular approaches when having many candidate models. Theoretically, the estimation risk of an oracle MA is not larger than that of an oracle MS because the former one is more flexible, but a foundational issue is: does MA offer a {\it substantial} improvement over MS? Recently, a seminal work: \citet{peng2021}, has answered this question under nested models with linear orthonormal series expansion. In the current paper, we further reply this question under linear nested regression models. Especially,  a more general nested framework, heteroscedastic and autocorrelated random errors, and sparse coefficients are allowed in the current paper, which is more common in practice.  In addition, we further compare MAs with different weight sets. Simulation studies support the theoretical findings in a variety of settings.
\abst.arg

\vskip 3mm\noindent
{\it Some key words:} 
Asymptotic efficiency,  model averaging, model selection,
risk comparison, weight set.\key.arg

\setcounter{page}{1}

\section{Introduction}\label{sec:intro}
\baselineskip=23pt

In the past two decades, model selection (MS) has received growing attention in statistics and econometrics. When a list of candidate models is considered, MS attempts to select a single best model. A large number of MS criteria have been proposed in the literature, including Akaike information criterion \citep[AIC;][]{akaike1973}, Mallows' $C_p$ \citep{mallows1973}, Bayesian information criterion \citep[BIC;][]{schwarz1978},  cross-validation \citep[CV;][]{allen1974,stone1974}, and so on. Model averaging (MA) is an alternative to MS by taking a weighted average of the estimators/predictions from candidate models, and thus is a smoothed extension of MS, and can potentially reduce risk relative to MS 
 \citep{yuan2005,magnus2010comparison}.
 
Asymptotic efficiency (or asymptotic optimality) is a key theoretical goal pursued in both MS and MA literature. It states that the risk (or loss) of MS (or MA) is equivalent to that of the infeasible oracle candidate model (or averaging estimator/prediction).
%
 For MS methods, AIC is asymptotic efficient but BIC is not in the nonparametric framework \citep{shibata1983,shao1997}; \citet{li1987} established the asymptotic efficiency of $C_p$ and leave-one-out CV (LOO-CV) for the homoscedastic nonparametric regression; \citet{andrews1991} extended the results of \citet{li1987} to the case of heteroscedastic errors. 
 See \citet{ding2018} for a recent review on the properties of MS methods. 
 
 For asymptotic optimality of MA, \citet{hansen2007mma} established the asymptotic optimality for Mallows Model Averaging (MMA) when the candidate models are nested and the weights are restricted to a discrete set. \citet{wan2010} and \citet{zhang2021} extended the result of \citet{hansen2007mma} to a non-nested model setting with continuous weights. \citet{hansen2012} established the asymptotic optimality of Jackknife Model Averaging (JMA) for heteroscedastic errors with the weight contained in a discrete set. \citet{zhang2013JMA} broadened \citet{hansen2012}'s scope of analysis to dependent data and a continuous weight set. 
 \citet{liu2013ma} proposed a heteroscedasticity-robust
  model averaging with asymptotic optimality. 
 \citet{ando2014,ando2017} removed the conventional restriction that the sum of weights equals one in MA and established the asymptotic optimality of JMA for high-dimensional linear models and generalized linear models. \citet{zhao2016} broadened \citet{ando2014}'s scope of analysis to dependent data.
 \citet{zhang.wang:2019}, \citet{fang.lan.ea:2019}, and \citet{feng.liu.ea:2021}
 established  
  asymptotic optimality for MA in nonparametric, missing data, and nonlinear models, respectively.  

However, most existing literature mainly focuses on optimal properties (e.g., asymptotic efficiency) of MS or MA in their own terms. Although many successful empirical advancements in MA have been demonstrated \citep[see, e.g.,][]{moral2015model,magnus2016,steel2020model}, theoretical investigations comparing MS and MA are still lacking. Recently, \citet{peng2021} made a seminal contribution to this question. They studied the foundational matter that compares the oracle/optimal MS with MA procedures in a nested model setting with series expansion and have
some remarkable findings. However, a limit of \citet{peng2021} is that their study was built with three restrictions: orthonormal design, homoscedastic and  independent random errors, and non-sparse coefficients, which could make the study is not applicable in applications. The goal of the current paper is to broaden the scope of analysis of \citet{peng2021} to a general model setting and to answer more important questions. Specifically, our main contributions are as follows. 
\begin{description}
	\item[(i)] 
	Without the three restrictions aforementioned,  
	we answer the questions:
	 does 
	MA offer a {\it significant} improvement over MS? If it can happen, when?  
	 %
	  Moreover, we partition the predictor variables into groups and the group size is allowed to be larger than 1, which leads to a more general nested model framework than
	  that in \citet{peng2021}. 
	  We define a sequence of indices for grouped variables and find that its decaying order determines when MA is substantially better than MS. As a result, the analysis of \citet{peng2021} becomes a special case of ours. In finite simulation studies, we compare the performance of MMA or JMA with several MS methods, including AIC, BIC, and LOO-CV. The simulation results support our theoretical findings.
	\item[(ii)] In MA literature, the MA weights could be chosen from three different weight sets, e.g., the unit simplex \citep{wan2010,zhang2013JMA}, the unit hypercube \citep{ando2014,ando2017}, and the discrete weight set \citep{hansen2007mma,hansen2012}. We further broaden the scope of our work to  compare MAs with the weights belonged to these three weight sets.
\end{description}

The rest of the paper is organized as follows. 
Section \ref{sec:problem} provides model setting and four important questions to be answered. 
Section \ref{sec:main} presents the main results about the comparison of MS and MA. Section \ref{sec:relax} considers the comparison of MAs with three different weight sets. Section \ref{sec:twoexm} provides two examples to verify the theoretical results. Section \ref{sec:simu} presents the results of finite sample simulations. Section \ref{sec:concl} concludes. The proofs of theorems, corollaries, and a lemma are relegated to the Appendix. Supplementary materials contain some additional theoretical results and simulation studies.

\section{Model Setting and Questions}\label{sec:problem}

Consider the model
\begin{equation}\label{eq:model}
	y_i=\mu_i+\varepsilon_i=\sum_{j=1}^{p_n} x_{ij}\beta_j+\varepsilon_i,\quad i=1,\ldots,n,
\end{equation}
where $\varepsilon_1,\ldots,\varepsilon_n$ are random errors, $x_{i1},\ldots,x_{ip_n}$ are nonstochastic predictor variables, and $p_n$ $(p_n<n)$ is the number of the predictors. In matrix notation, \eqref{eq:model} can be written as $\bm{y}=\bm{\mu}+\bm{\varepsilon}$, where $\bm{y}=(y_1,\ldots,y_n)^{\top}$, $\bm{\mu}=(\mu_1,\ldots,\mu_n)^{\top}$, and $\bm{\varepsilon}=(\varepsilon_1,\ldots,\varepsilon_n)^{\top}$. We assume that $E(\bm{\varepsilon})=\bm{0}$ and $\text{Cov}(\bm{\varepsilon})=\bm{\Omega}$, where $\bm{\Omega}$ is a positive definite matrix. In the current paper, we use bold forms to denote vectors or matrices. 

Following \citet{hansen2007mma,hansen2014}, \citet{feng2020nested}, and \citet{zhang2020PMA}, we consider the nested models, where the $m$th model using the first $\nu_m$ predictor variables, such that $0=\nu_0<\nu_1<\nu_2<\cdots<\nu_{q_n-1}<\nu_{q_n}= p_n$, and $q_n$ is a positive integer. 
This nested framework essentially  
 requires that the predictor variables are partitioned into $q_n$ groups and the grouped predictors are ordered, where the $m$th group of predictors is $\{x_{i,\nu_{m-1}+1},\ldots,x_{i,\nu_m}\}$ and its size is $\nu_m-\nu_{m-1}$ for $m=1,\ldots,q_n$. 
 
 Because some model screening procedures may be implemented prior to MS or MA, without loss of generality, we use the first $M_n$ ($2\leq M_n\leq q_n$) nested candidate models for MS and MA. 
Let $ \mathbf{X}_m$ be the $n\times \nu_m$ design matrix of the $m$th candidate model. We assume $ \mathbf{X}_m$ is full of rank for any  $m\in \{1,\ldots,M_n\}$. Then, under the $m$th model, the estimator of $\bm{\mu}$ is
\begin{equation*}
	\hat{\bm{\mu}}_{m}=  \mathbf{X}_m(  \mathbf{X}_m^{\top}  \mathbf{X}_m)^{-1}  \mathbf{X}_m^{\top}\bm{y}\equiv  \mathbf{P}_m\bm{y},
\end{equation*}
where $ \mathbf{P}_m=  \mathbf{X}_m(  \mathbf{X}_m^{\top}  \mathbf{X}_m)^{-1}  \mathbf{X}_m^{\top}$ is the hat matrix. A MS method selects an index (say $m^\star$) from the index set $\mathcal{H}_n=\{1,\ldots,M_n\}$ and estimate $\bm{\mu}$ by $\hat{\bm{\mu}}_{m^\star}$ using the selected model. 
%
Let $\bm{w}=(w_1,\ldots,w_{M_n})^{\top}$ be a weight vector belonging to the unit simplex of $\mathbb{R}^{M_n}$:
$$
\mathcal{W}_n=\left\{\bm{w}\in [0,1]^{M_n}: \sum_{m=1}^{M_n} w_m=1\right\}.
$$
Then, the MA estimator of $\bm{\mu}$ with weight vector $\bm{w}$ is
\begin{equation*}
	\hat{\bm{\mu}}(\bm{w})=\sum_{m=1}^{M_n} w_m \hat{\bm{\mu}}_m=\sum_{m=1}^{M_n} w_m  \mathbf{P}_m\bm{y}\equiv  \mathbf{P}(\bm{w})\bm{y},
\end{equation*}
where $ \mathbf{P}(\bm{w})=\sum_{m=1}^{M_n} w_m  \mathbf{P}_m$. The measurement of estimation accuracy is the squared prediction risk, which is defined as $R_n(m)=E\{L_n(m)\}$ and $R_n(\bm{w})=E\{L_n(\bm{w})\}$  for MS and MA, respectively, where
\begin{equation*}
	L_n(m)=\|\hat{\bm{\mu}}_{m}-\bm{\mu}\|^2\quad\text{and}\quad L_n(\bm{w})=\|\hat{\bm{\mu}}(\bm{w})-\bm{\mu}\|^2
\end{equation*}
are the squared prediction loss for MS and MA, respectively, and $\|\cdot\|^2$ is the squared Euclidean norm.

Let $m_n^*$ be the oracle optimal model index that minimizes $R_n(m)$ in $\mathcal{H}_n$ and let $\bm{w}_n^*$ be the oracle optimal weights that minimizes $R_n(\bm{w})$ in $\mathcal{W}_n$. We assume that $m_n^*$ and $\bm{w}_n^*$ are unique. Note that we can not apply $m_n^*$ and $\bm{w}_n^*$ in practice, since they are unknown. But using the asymptotically optimal MS and MA procedures mentioned in Section \ref{sec:intro}, we can select
a model index $\hat{m}_n$ and weights $\hat{\bm{w}}_n$, respectively, in the sense that
\begin{equation}\label{eq:asyopt}
	\frac{R_n(\hat{m}_n)}{R_n(m_n^*)}\xrightarrow{p} 1\quad \text{and}\quad \frac{R_n(\hat{\bm{w}}_n)}{R_n(\bm{w}_n^*)}\xrightarrow{p} 1,
\end{equation}
where $\xrightarrow{p}$ denotes convergence in probability. 
All limiting processes are studied with respect to $n\to \infty$. 
Note that in \eqref{eq:asyopt}, $\hat{m}_n$ and $\hat{\bm{w}}_n$ are directly plugged in the expressions $R_n(m)$ and $R_n(\bm{w})$. \citet{zhang2020PMA} provided a new type of asymptotical optimality as follows:
\begin{equation}\label{eq:asyoptnew}
	\frac{E\{L_n(\hat{m}_n)\}}{R_n(m_n^*)}\xrightarrow{} 1\quad \text{and}\quad \frac{E\{L_n(\hat{\bm{w}}_n)\}}{R_n(\bm{w}_n^*)}\xrightarrow{} 1,
\end{equation}
where the randomness of $\hat{m}_n$ and $\hat{\bm{w}}_n$ are taken into account.

Since MS is a special case of MA with weights concentrating on a single model, it is obvious that $R_n(m_n^*)\geq R_n(\bm{w}_n^*)$. The first task of this paper is essentially on improvability of the oracle regression model $m_n^*$ by the oracle MA. 
Let $\Delta_n=R_n(m_n^*)-R_n(\bm{w}_n^*)$ denote the potential risk reduction of the oracle MA compared to the oracle MS. We first the following two key questions: 
\begin{itemize}
	\item[Q1.] Can $R_n(\bm{w}_n^*)$ bring in 
	a smaller order
	than $R_n(m_n^*)$, i.e. can 
	$R_n(\bm{w}_n^*)/R_n(m_n^*)=o(1)$ happen?
	\item[Q2.] Is $\Delta_n$ a substantial reduction relative to $R_n(m_n^*)$ or actually negligible? If both can happen, when is the MA substantially better than MS?
\end{itemize}

\begin{rem}
Write $ \bm{x}_j=(x_{1j},\ldots,x_{nj})^{\top}$, $j=1,\ldots,p_n$. The vectors $ \bm{x}_{1},\ldots, \bm{x}_{p_n}$ are said to be orthonormal if
\begin{equation}\label{eq:orthcond}
	\frac{1}{n}\sum_{i=1}^n x_{ij}^2=1\quad \text{and}\quad \sum_{i=1}^n x_{ij}x_{ik}=0,\quad 1\leq j\neq k\leq p_n.
\end{equation}
Note that \eqref{eq:orthcond} is satisfied for a nonparametric regression with orthonormal series expansion. \citet{peng2021} has answered Questions Q1 and Q2 under the assumption that the predictors are orthonormal (i.e., \eqref{eq:orthcond} holds), and the error $\varepsilon_i$'s are homoscedastic and independent, which can be 
restrictive for some applications. In the current paper, we answer the same questions in a more general model setting without the above assumptions. Moreover, \citet{peng2021} used the typical nested framework $\nu_m=m$, which is also relaxed in the current paper.
\end{rem}

In addition to the weight set $\mathcal{W}_n$, there are other two popular weight sets in MA literature: the unit hypercube
$$\mathcal{Q}_n=\left\{\bm{w}\in [0,1]^{M_n}: 0\leq w_m\leq 1\right\}$$
and the discrete weight set
\begin{equation*}
	\mathcal{W}_n(N)=\left\{\bm{w}:w_m\in \left\{0,\frac{1}{N},\frac{2}{N},\ldots,1\right\},\sum_{m=1}^M w_m=1\right\}
\end{equation*}
for a fixed positive integer $N$. The weight set $\mathcal{Q}_n$ removes the restriction of  weights from adding up to 1 in $\mathcal{W}_n$. Among MA literature, \citet{ando2014} 
used this weight set to study the optimal MA for the first time in a high-dimensional linear regression setting.  Recently, this weight set was further carried to other regression models, e.g., high-dimensional generalized linear model \citep{ando2017}, high-dimensional quantile regression \citep{wang.QR}, and high-dimensional survival analysis \citep{yan2021,he2020}. In MA literature, the discrete set $\mathcal{W}_n(N)$ is often only considered to establish some asymptotic theories of MA for technical convenience; see, e.g.,  \citet{hansen2007mma}, \citet{hansen2012}, and \citet{fang2020}. In practice, different weight sets can produce different results, hence the the comparisons of MA with the weight sets are very important. In literature, they are compared by numerical examples; see, for example, \citet{ando2014} and \citet{wang.QR}.   
The next task of the current paper is on the theoretical comparison of MA with the weights belonging to different weight sets.   

Let $\tilde{\bm{w}}_n^*$ and $\bm{w}_{n,N}^*$ be the oracle optimal weights that minimizes $R_n(\bm{w})$ in $\mathcal{Q}_n$ and $\mathcal{W}_n(N)$, respectively. We assume that $\tilde{\bm{w}}_n^*$ and $\bm{w}_{n,N}^*$ are unique. Obviously, $R_n(\tilde{\bm{w}}_n^*)\leq R_n(\bm{w}_n^*)\leq R_n(\bm{w}_{n,N}^*)$ since $\mathcal{W}_n(N)\subset \mathcal{W}_n\subset \mathcal{Q}_n$. It implies that the weight relaxation could possibly bring in a smaller optimal risk of MA and the restriction of the weight set $\mathcal{W}_n$ to $\mathcal{W}_n(N)$ could lead to a larger optimal risk of MA. However, it is unclear that whether 
the risk reduction of optimal  MA 
by relaxing the 
weight set $\mathcal{W}_n$ to $\mathcal{Q}_n$ 
 is substantial and whether the risk increment of optimal  MA by restricting the weight set $\mathcal{W}_n$ to $\mathcal{W}_n(N)$ is substantial. Since $\mathcal{W}_n$ is widely  used, we use $R_n(\bm{w}_n^*)$ as a benchmark for the comparisons. Therefore, we consider the other two key issues as follows:

\begin{itemize}
	\item[Q3.] Is $R_n(\bm{w}_n^*)-R_n(\tilde{\bm{w}}_n^*)$ a substantial reduction relative to $R_n(\bm{w}_n^*)$ or actually negligible? If both can happen, when is $\tilde{\bm{w}}_n^*$ substantially better than $\bm{w}_{n}^*$?
	\item[Q4.] Is the risk increment  $R_n(\bm{w}_{n,N}^*)-R_n(\bm{w}_n^*)$ substantial relative to $R_n(\bm{w}_n^*)$ or actually negligible? If both can happen, when is $\bm{w}_n^*$ substantially better than $\bm{w}_{n,N}^*$?
\end{itemize}

The answers to Questions Q1 and Q2 broaden the scope of \citet{peng2021}'s work on advantage of MA over MS. The answers to Questions Q3 and
Q4 provide a previously unavailable insight on relative strengths of MA with these three weight sets.

Throughout this paper, we will use the following symbols. For two nonstochastic positive sequences $a_n$ and $b_n$, $a_n\succeq b_n$ means $b_n=O(a_n)$, and $a_n\asymp b_n$ means both $a_n\succeq b_n$ and $b_n\succeq a_n$. Also, $a_n\sim b_n$ means that $a_n/b_n\to 1$.
For two stochastic sequences $a_n$ and $b_n$, $a_n\asymp_p b_n$ means that $a_n=O(b_n)\{1+o_p(1)\}$ and $b_n=O(a_n)\{1+o_p(1)\}$; $a_n\sim_p b_n$ means that $a_n/b_n\to_p 1$. 
Let $\lfloor a\rfloor$ be the greatest integer less than or equal to $a$. Let $\lambda_{\min}(\bm{\Omega})$ and $\lambda_{\max}(\bm{\Omega})$ be the minimum and maximum eigenvalues of $\bm{\Omega}$, respectively.

\section{Comparisons of MS and MA Procedures}\label{sec:main}

In this section, the goal is to answer Questions Q1 and Q2. 
We first introduce some important notations and assumptions. Then, 
we theoretically investigate the comparison of the oracle optimal model $m_n^*$ and the oracle MA in Subsection \ref{subsec:comp}. Last, we extend the obtained results to compare two specific asymptotically optimal MS and MA procedures in Subsection \ref{subsec:comp2}. 

\subsection{Grouped Variable Importance}

In this subsection, we introduce a notation to measure the importance of each grouped predictors, whose decaying order determines when MA is substantially better than MS that will be shown in Subsection \ref{subsec:comp}. Let $\mathbf{X}_m^c$ be the $n\times (p_n-\nu_m)$ design matrix that consists of the predictors excluded from the $m$th model. Let $\bm{\beta}_m=(\beta_1,\ldots,\beta_{\nu_m})^{\top}$ and $\bm{\beta}_m^c=(\beta_{\nu_m+1},\ldots,\beta_{p_n})^{\top}$. Then, $\bm{\mu}=\mathbf{X}_m \bm{\beta}_m+\mathbf{X}_m^c \bm{\beta}_m^c$. For convenience, we further assume $ \mathbf{X}_m$ is full of rank for any $m\in \{M_n+1,\ldots,q_n\}$. 
For the $m$th model ($m=1,\ldots,q_n$), define
\begin{equation}\label{eq:index2}
	\theta_{n,m}=\frac{\bm{\beta}_{m-1}^{c\top}\mathbf{X}_{m-1}^{c\top}(\mathbf{I}_n-\mathbf{P}_{m-1})\mathbf{X}_{m-1}^c\bm{\beta}_{m-1}^c-\bm{\beta}_{m}^{c\top}\mathbf{X}_{m}^{c\top}(\mathbf{I}_n-\mathbf{P}_{m})\mathbf{X}_{m}^c\bm{\beta}_{m}^c}{n\mathrm{tr}\{( \mathbf{P}_m- \mathbf{P}_{m-1})\bm{\Omega}\}},
\end{equation}
where $ \mathbf{P}_0=\bm{0}$. Since the two terms in the numerator of \eqref{eq:index2} measure the importance of the remaining predictors after excluding from the $(m-1)$th model and the $m$th model, respectively, $\theta_{n,m}$ can be regarded as the importance of the $m$th group of variable in some sense. Therefore, we term 
$\theta_{n,m}$ as the grouped variable importance (GVI). 

Next, we impose additional assumptions for the model \eqref{eq:model} to obtain simple forms of GVI as follows.\\
 \textbf{Case 1 (Orthonormal design)} If the orthonormal design assumption \eqref{eq:orthcond} is satisfied, it is easy to show that $\bm{\beta}_{m}^{c\top}\mathbf{X}_{m}^{c\top}(\mathbf{I}_n-\mathbf{P}_{m})\mathbf{X}_{m}^c\bm{\beta}_{m}^c=n\|\bm{\beta}_{m}^c\|^2$. Then, $\theta_{n,m}$ reduces to
\begin{equation*}
	\theta_{n,m}=\frac{\sum_{j=\nu_{m-1}+1}^{\nu_m} \beta_j^2}{\mathrm{tr}\{( \mathbf{P}_m- \mathbf{P}_{m-1})\bm{\Omega}\}}.
\end{equation*}
 \textbf{Case 2 (Homoscedastic and uncorrelated errors)} If the error terms are homoscedastic and uncorrelated with variance $\sigma^2>0$, it is easy to see that $\mathrm{tr}\{( \mathbf{P}_m- \mathbf{P}_{m-1})\bm{\Omega}\}=\sigma^2(\nu_m-\nu_{m-1})$. Then, $\theta_{n,m}$ reduces to
\begin{equation*}
		\theta_{n,m}=\frac{\bm{\beta}_{m-1}^{c\top}\mathbf{X}_{m-1}^{c\top}(\mathbf{I}_n-\mathbf{P}_{m-1})\mathbf{X}_{m-1}^c\bm{\beta}_{m-1}^c-\bm{\beta}_{m}^{c\top}\mathbf{X}_{m}^{c\top}(\mathbf{I}_n-\mathbf{P}_{m})\mathbf{X}_{m}^c\bm{\beta}_{m}^c}{n\sigma^2(\nu_m-\nu_{m-1})}.
\end{equation*}
 \textbf{Case 3 (Orthonormal design and homoscedastic and uncorrelated errors)} If the orthonormal design assumption \eqref{eq:orthcond} is satisfied and the error terms are homoscedastic and uncorrelated with variance $\sigma^2>0$, then $\theta_{n,m}$ has a simple form
\begin{equation*}
	\theta_{n,m}=\frac{\sum_{j=\nu_{m-1}+1}^{\nu_m} \beta_j^2}{\sigma^2(\nu_m-\nu_{m-1})}.
\end{equation*}
\textbf{Case 4 (Model setting of \citet{peng2021})} In the model setting of \citet{peng2021}, i.e., under the  assumptions of the orthonormal design \eqref{eq:orthcond}, the homoscedastic and uncorrelated errors with variance $\sigma^2>0$, and the typical nested framework $\nu_m=m$, $\theta_{n,m}$ reduces to
\begin{equation*}
	\theta_{n,m}=\beta_m^2/\sigma^2.
\end{equation*}

From these cases (especially, Cases 1, 3, and 4), the numerator of $\theta_{n,m}$ is determined 
by the coefficients of the variables in the $m$th group, which further means that $\theta_{n,m}$ is a measure of the grouped variable importance. In the following remark, we provide another explanation for $\theta_{n,m}$. 
\begin{rem}[\textbf{Another explanation for $\theta_{n,m}$}]
	By some simple calculations, we can get a more simple form for $\theta_{n,m}$ as follows
	\begin{equation}\label{eq:index}
	\theta_{n,m}=\frac{n^{-1}\bm{\mu}^{\top}( \mathbf{P}_m- \mathbf{P}_{m-1})\bm{\mu}}{\mathrm{tr}\{( \mathbf{P}_m- \mathbf{P}_{m-1})\bm{\Omega}\}}.
	\end{equation}
	In the first formula of Appendix \ref{sec:thm1}, 
	we show that the risk of
		 $\hat{\bm{\mu}}_m$ is
	\begin{eqnarray*}
		R_n(m)\hspace{-6mm}&&=E\|\hat{\bm{\mu}}_m-\bm{\mu}\|^2\nonumber\\
		&&=\mathrm{tr}\left\{ (E\hat{\bm{\mu}}_m-\bm{\mu})(E\hat{\bm{\mu}}_m-\bm{\mu})^\top\right\} + \mathrm{tr}\left\{\mathrm{var}(\hat{\bm{\mu}}_m)\right\}\nonumber\\
		&&=\bm{\mu}^{\top}(\mathbf{I}_n-\mathbf{P}_m)\bm{\mu}+\mathrm{tr}(\mathbf{P}_m\bm{\Omega}),
	\end{eqnarray*} 
	where $\bm{\mu}^{\top}(\mathbf{I}_n-\mathbf{P}_m)\bm{\mu}$ is the trace of the squared bias term of $\hat{\bm{\mu}}_m$ and $\mathrm{tr}(\mathbf{P}_m\bm{\Omega})$ is the trace of the variance term of $\hat{\bm{\mu}}_m$, which are non-increasing and increasing in $m$, respectively, because of the nested framework of candidate models.  Therefore, by (\ref{eq:index}), the numerator and denominator of $\theta_{n,m}$ are  the decrement of the squared bias scared by $n$ and the increment of the variance of risks, respectively, when adding the $m$th group of predictors to the $(m-1)$th model. When $\bm{\Omega}=\sigma^2 \mathbf{I}_n$ and the size of groups is fixed (say $\nu^\ast$), the increment of the variance of risks is fixed to be $\nu^\ast\sigma^2$.
\end{rem}

We now impose the following two assumptions for the model \eqref{eq:model}, which are commonly used in the MA literature.
\begin{description}
	\item[Assumption 1.] $\|\bm{\mu}\|^2/n=O(1)$.
	\item[Assumption 2.] There are constants $0<c_1\leq c_2<\infty$ such that $c_1< \lambda_{\min}(\bm{\Omega})\leq \lambda_{\max}(\bm{\Omega})< c_2$.
\end{description}

{
	Assumption 1 requires the average of $\mu_i^2$ is bounded. Assumption 2 excludes the degeneracy and divergence of the error terms. Similar assumptions can be found in \citet{wan2010}, \citet{zhang2013JMA}, and \citet{liu2016}.}
Since $ \mathbf{P}_m  \mathbf{P}_l= \mathbf{P}_{\min(m,l)}$ in the nested model setting, it can be easily verified that $ \mathbf{P}_m- \mathbf{P}_{m-1}$ is a symmetric idempotent  matrix, which, together with Assumption 1 yields that $0\leq n^{-1}\bm{\mu}^{\top}( \mathbf{P}_m- \mathbf{P}_{m-1})\bm{\mu}\leq \|\bm{\mu}\|^2/n<\infty$. Using Assumption 2,  we have
$$
0<c_1(\nu_m-\nu_{m-1})< \mathrm{tr}\{( \mathbf{P}_m- \mathbf{P}_{m-1})\bm{\Omega}\}< c_2(\nu_m-\nu_{m-1}).
$$
By (\ref{eq:index}) and the truth that
$ \mathbf{P}_m- \mathbf{P}_{m-1}$ is a symmetric idempotent  matrix, we know that 
\begin{equation*}
	0\leq \theta_{n,m}< \infty \text{~for any $n$ and $m=1,\ldots,q_n$}.
\end{equation*}

We further impose an  assumption on $\theta_{n,m}$ as follows.
\begin{description}
	\item[Assumption 3.] For each sufficiently large $n$, $\{\theta_{n,m}: m=1,\ldots,q_n\}$ is a non-increasing sequence.
\end{description}

Assumption 3 is an extension of Assumption 1 of \citet{peng2021}. 
The non-increasing ordering of $\{\theta_{n,m}\}_{m=1}^{q_n}$ can give us a convenience to characterize the unknown optimal model index $m_n^*$ and weights $\bm{w}_n^*$, $\tilde{\bm{w}}_n^*$, and $\bm{w}_{n,N}^*$, and essentially requires that the predictors are groupwise ordered from the most important to the
least important, i.e, the ordering of grouped variables is ``correct". 
However, different from \citet{peng2021} where the sequence $\{\theta_{n,m}: m=1,\ldots,q_n\}$ required to be positive, the current paper allows some components in the sequence to be equal to zeros, i.e., we allow some totally unimportant variables. For the aforementioned Cases 3-4, it means we allow some kind of sparsity of coefficients, which is an important  property especially for high-dimensional problems.
Under Assumption 3, let $d_n=\max\{m\in\{1,\ldots,q_n\}:\theta_{n,m}>0\}$ be the number of important groups of predictors. If $d_n<q_n$, the $m$th group of predictors is not important for $m=d_n+1,\ldots,q_n$. Next, we make the following assumptions.

\begin{figure}[tpb]
	\centering
	\includegraphics[scale=0.7]{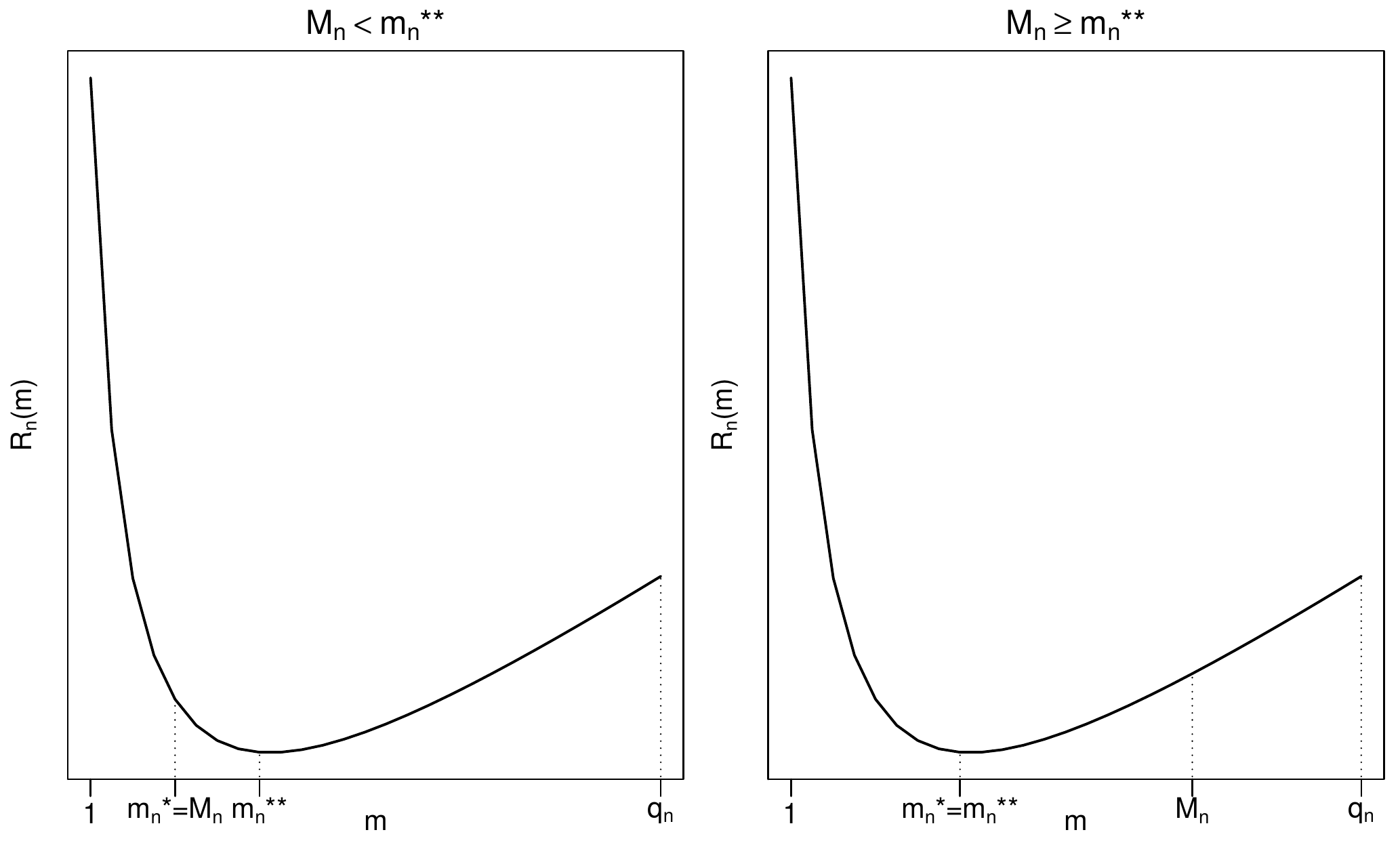}
	\caption{Two typical situations of the squared prediction risk $R_n(m)$ and the relationship between $M_n$ and $m_n^{**}$  
		under Assumption 6. In left panel: $M_n<m_n^{**}$. In right penal: $M_n\geq m_n^{**}$.}
	\label{fig:riskRm}
\end{figure}

\begin{description}
	\item[Assumption 4.] There exists a constant $V\geq 1$ which does not depend on $m$ and $n$, such that $\max_{1\leq m\leq d_n}(\nu_{m}-\nu_{m-1})\leq V$ holds uniformly in large enough $n$.
	\item[Assumption 5.] For any fixed $m\in \{1,\ldots,d_n\}$, there exist constants $\bar{\theta}_m>0$ and $K_m>0$ such that $\theta_{n,m}\geq \bar{\theta}_m$ for any $n\geq K_m$.
	\item[Assumption 6.] For each sufficiently large $n$, $R_n(m)$ is first decreasing and then increasing when $m$ varies from 1 to $d_n$.
\end{description}

Assumption 4 means that when the predictors are partitioned into groups described in Section \ref{sec:problem}, the sizes of all important groups do not grow to infinity as $n$ increases. \citet{hansen2014} and \citet{zhang2016ols} also made assumptions on group sizes for the comparison of the risks of estimators by the full model and the MMA.
Assumption 5 basically eliminates the case that $\theta_{n,m}$ goes to zero as $n$ increases for any fixed $m\in \{1,\ldots,d_n\}$. Thus, it excludes the local-to-zero asymptotic framework with fixed dimensions considered by \citet{hjort2003} and \citet{liu2015joe}. Note that in the model setting of \citet{peng2021}, Assumptions 4--5 are obviously satisfied. 
Assumption 6 essentially prevents $d_n$ from being too small. For example, when $d_n\equiv d_0$ is fixed as $n\to \infty$ and Assumptions 2 and 5 hold, by \eqref{eq:diffrisk} in the Appendix, 
\begin{eqnarray*}
	R_n(m)-R_n(m-1)\hspace{-6mm}&&=n\mathrm{tr}\{( \mathbf{P}_m- \mathbf{P}_{m-1})\bm{\Omega}\}\biggl(\frac{1}{n}-\theta_{n,m}\biggr) \\
	&&\leq n\mathrm{tr}\{( \mathbf{P}_m- \mathbf{P}_{m-1})\bm{\Omega}\}\biggl(\frac{1}{n}-\bar{\theta}_{m}\biggr)<0 
\end{eqnarray*}
for $m=2,\ldots,d_0$ and each sufficiently large $n$, which implies that $R_n(m)$ is decreasing on $\{1,\ldots,d_0\}$ for each sufficiently large $n$ and thus Assumption 6 is not satisfied. Under Assumptions 3 and 5, Assumption 6 is satisfied when $\theta_{n,d_n}<{1}/{n}$ for each sufficiently large $n$, which is derived as follows. For a sufficiently large $n$, from Assumptions 3 and 5 and $\theta_{n,d_n}<{1}/{n}$, there exists a $m_n'\in \{2,\ldots,d_n-1\}$ such that $\theta_{n,m_n'}>1/n\geq \theta_{n,m_n'+1}$. Therefore, when $m=2,\ldots,m_n'$, 
\begin{equation*}
	R_n(m)-R_n(m-1)\leq n\mathrm{tr}\{( \mathbf{P}_m- \mathbf{P}_{m-1})\bm{\Omega}\}\biggl(\frac{1}{n}-\theta_{n,m_n'}\biggr)<0;
\end{equation*}
when $m=m_n'+1,\ldots,d_n$,
\begin{equation*}
	R_n(m)-R_n(m-1)\geq n\mathrm{tr}\{( \mathbf{P}_m- \mathbf{P}_{m-1})\bm{\Omega}\}\biggl(\frac{1}{n}-\theta_{n,m_n'+1}\biggr)\geq 0;
\end{equation*}
and $R_n(d_n)-R_n(d_n-1)>0$, which together imply that $R_n(m)$ is decreasing in $m\in\{1,\ldots,m_n'\}$ and increasing in $m\in \{m_n',\ldots,d_n\}$. Assumption 6 also implies that $R_n(m)$ is first decreasing and then increasing when $m$
increases from $1$ to $q_n$ for each sufficiently large $n$. 
In the derivation of (A.2) in \citet{peng2021}, they also used this assumption. 

Let $m_n^{**}=\arg\min_{m\in \{1,\ldots,q_n\}} R_n(m)$ be the global optimal model index. We assume that $m_n^{**}$ is unique. Note that $m_n^{**}$ may not equal to $m_n^*$ since the number of candidate models $M_n$ may be too small to include the $m_n^{**}$th model. In fact, under Assumption 6, 
\begin{equation*}
	m_n^*=\min\{M_n,m_n^{**}\}=\begin{cases}
		M_n, & \mathrm{if}~M_n<m_n^{**} \\
		m_n^{**}, & \mathrm{if}~M_n\geq m_n^{**}
	\end{cases};
\end{equation*}
see Figure \ref{fig:riskRm} which shows 
two typical situations for $m_n^\ast$.  \citet{peng2021} compared MS and MA under the assumption that $M_n$ is large enough to include $m_n^{**}$, i.e., $M_n\geq m_n^{**}$. However, there is a lack of considering on the comparison of MS and MA when $M_n<m_n^{**}$. In this paper, we also relax this assumption and investigate the impact of $M_n$ on the comparison of MS and MA.

In Subsections \ref{subsec:comp}--\ref{subsec:comp2}, we shall show that the number of candidate models $M_n$ and the decaying order of $\{\theta_{n,m}\}_{m=1}^{d_n}$ determine when MA is substantially better than MS.

\subsection{A Comparison of Oracle Optimal MS and MA}\label{subsec:comp}

In this subsection, we answer Questions Q1 and Q2, that is we compare the risks of optimal MS and MA estimators. We begin with the following theorem on the order relationship of $R_n(m_n^*)$ and $R_n(\bm{w}_n^*)$, which provides an answer to Question Q1.

\begin{theorem}[\textbf{Answer to Question Q1}]\label{thm:mw}
	Suppose that Assumptions 1-3 and 6 hold. Then, for any sufficiently large $n$,
	\begin{equation}\label{eq:wmlb}
		R_n(\bm{w}_n^*)\geq \frac{1}{2} R_n(m_n^*).
	\end{equation}
	Moreover, the risks using $m_n^*$ and $\bm{w}_n^*$ have the same order, i.e.,
	\begin{equation}\label{eq:eqiv}
		R_n(m_n^*)\asymp R_n(\bm{w}_n^*).
	\end{equation}
\end{theorem}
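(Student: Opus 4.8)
The plan is to diagonalize the whole problem using the nested structure and then reduce the claim to two elementary facts: a coordinatewise bound on the averaging risk and a ``harmonic-mean versus minimum'' inequality. First I would introduce the orthogonal increments $\mathbf{D}_l=\mathbf{P}_l-\mathbf{P}_{l-1}$ for $l=1,\dots,M_n$ (with $\mathbf{P}_0=\bm{0}$). Because $\mathbf{P}_m\mathbf{P}_l=\mathbf{P}_{\min(m,l)}$, the $\mathbf{D}_l$ are mutually orthogonal symmetric idempotents, and $\mathbf{I}_n-\mathbf{P}_{M_n}$ is orthogonal to all of them. Writing the cumulative weights $W_l=\sum_{m=l}^{M_n}w_m$, one has $\mathbf{P}(\bm{w})=\sum_{l=1}^{M_n}W_l\mathbf{D}_l$, and $\bm{w}\in\mathcal{W}_n$ is equivalent to $1=W_1\ge W_2\ge\cdots\ge W_{M_n}\ge 0$. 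Setting $b_l=\bm{\mu}^{\top}\mathbf{D}_l\bm{\mu}\ge 0$, $v_l=\mathrm{tr}(\mathbf{D}_l\bm{\Omega})>0$ (positivity from Assumption 2), and $B=\bm{\mu}^{\top}(\mathbf{I}_n-\mathbf{P}_{M_n})\bm{\mu}\ge 0$, the orthogonality collapses both the bias and variance parts into a separable sum,
\begin{equation*}
R_n(\bm{w})=\bm{\mu}^{\top}(\mathbf{I}_n-\mathbf{P}(\bm{w}))^2\bm{\mu}+\mathrm{tr}\{\mathbf{P}(\bm{w})^2\bm{\Omega}\}=B+\sum_{l=1}^{M_n}\{(1-W_l)^2 b_l+W_l^2 v_l\},
\end{equation*}
where the $l=1$ term is the constant $v_1$ since $W_1=1$.

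Next I would lower bound $R_n(\bm{w}_n^*)$ term by term. For each $l$ and each admissible $W_l\in[0,1]$, $(1-W_l)^2 b_l+W_l^2 v_l\ge b_lv_l/(b_l+v_l)$, the unconstrained minimum (attained at $b_l/(b_l+v_l)\in[0,1]$). Crucially this bound holds coordinatewise, irrespective of the monotonicity constraint, so I never need to solve the constrained program or exhibit $\bm{w}_n^*$ explicitly: for every $\bm{w}\in\mathcal{W}_n$, hence for $\bm{w}_n^*$,
\begin{equation*}
R_n(\bm{w}_n^*)\ge v_1+B+\sum_{l=2}^{M_n}\frac{b_lv_l}{b_l+v_l}\ge v_1+B+\frac12\sum_{l=2}^{M_n}\min(b_l,v_l),
\end{equation*}
the last step using $b_l+v_l\le 2\max(b_l,v_l)$.

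Finally I would identify the remaining sum with the MS risk. The one-step difference identity $R_n(m)-R_n(m-1)=v_m-b_m$ (equivalently \eqref{eq:diffrisk}) combined with Assumption 6 forces $v_l\le b_l$ for $l\le m_n^{**}$ and $v_l\ge b_l$ for $l>m_n^{**}$, so $\min(b_l,v_l)=v_l$ below the turning point and $=b_l$ above it. Together with $m_n^*=\min(M_n,m_n^{**})$ and $R_n(m_n^*)=\sum_{l\le m_n^*}v_l+\sum_{m_n^*<l\le M_n}b_l+B$, a direct count in both regimes ($M_n<m_n^{**}$ and $M_n\ge m_n^{**}$) gives the identity $\sum_{l=2}^{M_n}\min(b_l,v_l)=R_n(m_n^*)-v_1-B$. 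Substituting yields $R_n(\bm{w}_n^*)\ge \tfrac12 R_n(m_n^*)+\tfrac12(v_1+B)\ge\tfrac12 R_n(m_n^*)$, which is \eqref{eq:wmlb}. Then \eqref{eq:eqiv} is immediate from \eqref{eq:wmlb} and the trivial reverse bound $R_n(\bm{w}_n^*)\le R_n(m_n^*)$ (MS is MA with a vertex weight), since $\tfrac12 R_n(m_n^*)\le R_n(\bm{w}_n^*)\le R_n(m_n^*)$.

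The main obstacle is organizational rather than analytic: getting the diagonalization exactly right, including the tail projection $\mathbf{I}_n-\mathbf{P}_{M_n}$, and verifying the $\min$-to-MS identity uniformly across the two regimes of $M_n$ versus $m_n^{**}$, where Assumption 6 supplies the crucial sign pattern of $v_l-b_l$ that pins down which of $b_l,v_l$ is the minimum. The constant $\tfrac12$ is exactly the price of the elementary bound $b_lv_l/(b_l+v_l)\ge\tfrac12\min(b_l,v_l)$, so no finer optimization over $\bm{w}$ is required.
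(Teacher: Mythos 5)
Your proposal is correct and follows essentially the same route as the paper's proof: the same orthogonal decomposition into increments $\mathbf{P}_m-\mathbf{P}_{m-1}$, the same per-coordinate quantity $b_lv_l/(b_l+v_l)$ (which is exactly the paper's $\gamma_{n,m}^*\mathrm{tr}\{(\mathbf{P}_m-\mathbf{P}_{m-1})\bm{\Omega}\}$), and the same factor $\tfrac12$ arising from the sign pattern of $v_l-b_l$ around $m_n^{**}$ in the two regimes $M_n<m_n^{**}$ and $M_n\ge m_n^{**}$. The only (harmless) difference is presentational: you use the coordinatewise unconstrained lower bound plus the inequality $b_lv_l/(b_l+v_l)\ge\tfrac12\min(b_l,v_l)$ rather than exhibiting the optimal cumulative weights $\gamma_{n,m}^*$ explicitly and bounding them by $\tfrac12$ at the threshold, as the paper does.
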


Note that Theorem \ref{thm:mw} needs not any restriction on the number of candidate models $M_n$, while \citet{peng2021} restricted $M_n\geq m_n^{**}$. Inequality \eqref{eq:wmlb} leads to $\Delta_n\leq \frac{1}{2}R_n(m_n^*)$ for sufficiently large $n$, which implies that the potential risk reduction of MA compared to MS does not exceed half of optimal risk of MS. Equation \eqref{eq:eqiv} indicates that while MA has a smaller optimal risk than MS, MA actually cannot reduce the increasing rate (or improve the decreasing rate) of risk by the optimal MS. 
Thus, even if the oracle model based on MS can be improved by MA, the potential advantage of MA in risk reduction is limited in terms of the increasing/deceasing rate.

We now turn our attention to Question Q2. We first present the following theorem on some elementary properties of the global optimal model index $m_n^{**}$, $R_n(m_n^*)$, and $R_n(\bm{w}_n^*)$, which are important for the subsequent analysis.

\begin{theorem}\label{thm:mn}
	Suppose that Assumptions 1--6 are satisfied. Then, 
	\begin{itemize}
		\item[(i)] $m_n^{**}\to \infty$ as $n\to \infty$;
		\item[(ii)] $R_n(m_n^*)\to \infty$ and $R_n(\bm{w}_n^*)\to \infty$ as $n\to \infty$.
	\end{itemize}
	
\end{theorem}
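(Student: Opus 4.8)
The plan is to handle the two conclusions in order: I would derive (i) from the sign of the first differences of $R_n(m)$ together with Assumption~5, and then obtain (ii) almost immediately from (i) by bounding $R_n(m_n^{**})$ below by its variance component and invoking Theorem~\ref{thm:mw}.

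For part (i) I would start from the first-difference identity displayed in the discussion of Assumption~6,
$$R_n(m)-R_n(m-1)=n\,\mathrm{tr}\{(\mathbf{P}_m-\mathbf{P}_{m-1})\bm{\Omega}\}\Bigl(\tfrac{1}{n}-\theta_{n,m}\Bigr),$$
and use Assumption~2, which gives $\mathrm{tr}\{(\mathbf{P}_m-\mathbf{P}_{m-1})\bm{\Omega}\}>0$, to conclude that the sign of $R_n(m)-R_n(m-1)$ equals the sign of $1/n-\theta_{n,m}$; in particular $R_n(m)<R_n(m-1)$ whenever $\theta_{n,m}>1/n$. Next I fix an arbitrary positive integer $M$. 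By Assumption~5 there are constants $\bar{\theta}_m>0$ and $K_m$ with $\theta_{n,m}\geq\bar{\theta}_m$ for $n\geq K_m$, so for every $m\in\{1,\ldots,M\}$ and every $n$ exceeding the finite threshold $\max_{m\leq M}\max\{K_m,\lceil 1/\bar{\theta}_m\rceil\}$ one has $\theta_{n,m}\geq\bar{\theta}_m>1/n$. This produces the strictly decreasing chain $R_n(1)>R_n(2)>\cdots>R_n(M)$, so no index below $M$ can minimise $R_n$ over $\{1,\ldots,q_n\}$; hence $m_n^{**}\geq M$ for all large $n$, and since $M$ was arbitrary, $m_n^{**}\to\infty$. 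Implicit here is that $M\leq d_n$ eventually, which is exactly what the bound $\theta_{n,m}\geq\bar{\theta}_m>0$ delivers, so that Assumptions~5 and 6 together force $d_n\to\infty$, consistent with the discussion preceding the theorem.

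For part (ii) the key observation is that $m_n^*=\min\{M_n,m_n^{**}\}$ lies in $\{1,\ldots,q_n\}$, over which $m_n^{**}$ is by definition the global minimiser, so $R_n(m_n^*)\geq R_n(m_n^{**})$ whether $M_n<m_n^{**}$ or $M_n\geq m_n^{**}$; this single inequality disposes of the case split emphasised in the text. It then suffices to bound $R_n(m_n^{**})$ from below. Discarding the nonnegative squared-bias term in the decomposition $R_n(m)=\bm{\mu}^{\top}(\mathbf{I}_n-\mathbf{P}_m)\bm{\mu}+\mathrm{tr}(\mathbf{P}_m\bm{\Omega})$ and summing the variance bound $\mathrm{tr}\{(\mathbf{P}_l-\mathbf{P}_{l-1})\bm{\Omega}\}>c_1(\nu_l-\nu_{l-1})$ (Assumption~2) telescopically, I obtain $R_n(m_n^{**})\geq\mathrm{tr}(\mathbf{P}_{m_n^{**}}\bm{\Omega})>c_1\nu_{m_n^{**}}\geq c_1 m_n^{**}$, the last step using $\nu_m\geq m$. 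By part (i) the right-hand side diverges, so $R_n(m_n^*)\to\infty$. Finally, inequality~\eqref{eq:wmlb} of Theorem~\ref{thm:mw} yields $R_n(\bm{w}_n^*)\geq\tfrac12 R_n(m_n^*)\to\infty$, which completes (ii).

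I expect the only genuinely delicate point to be part (i): establishing that the transition index $m_n^{**}$ truly drifts to infinity rather than stabilising at some finite level. The substance lies entirely in the interplay between Assumption~5, which keeps each fixed GVI $\theta_{n,m}$ bounded away from $0$ and hence eventually above the moving threshold $1/n$, and Assumption~6, which guarantees a genuine increasing phase and thereby prevents $d_n$, and with it $m_n^{**}$, from remaining bounded. Once $m_n^{**}\to\infty$ is secured, part (ii) reduces to routine bookkeeping with the variance lower bound and Theorem~\ref{thm:mw}.
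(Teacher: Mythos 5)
Your proof is correct, and while part (i) is essentially the paper's argument in different clothing, part (ii) takes a genuinely shorter route. For (i), the paper fixes $C$, applies Assumption~5 at the single index $\lfloor C\rfloor+1$ to get $\theta_{n,\lfloor C\rfloor+1}>1/n$ for large $n$, and then invokes the characterization $\theta_{n,m_n^{**}}>1/n\geq\theta_{n,m_n^{**}+1}$ from \eqref{eq:optm2} together with the monotonicity in Assumption~3 to force $m_n^{**}>C$; you instead show via the first-difference identity \eqref{eq:diffrisk} that $R_n$ is strictly decreasing on $\{1,\ldots,M\}$ for large $n$, so no index below $M$ can be the global minimizer. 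The two arguments rest on the same mechanism (Assumption~5 keeping each fixed $\theta_{n,m}$ above the moving threshold $1/n$), but yours does not route through \eqref{eq:optm2} and so does not even need Assumptions~3 and~6 for this step. For (ii), the paper splits into the cases $M_n\geq m_n^{**}$ and $M_n<m_n^{**}$, and in the latter carries out the multi-line estimate \eqref{eq:M12} to show $R_n(M_n)\geq\mathrm{tr}(\mathbf{P}_{m_n^{**}}\bm{\Omega})$; your observation that $R_n(m_n^*)=\min_{m\leq M_n}R_n(m)\geq\min_{m\leq q_n}R_n(m)=R_n(m_n^{**})$, because the minimum over a subset dominates the minimum over the superset, makes that case analysis unnecessary and reduces (ii) to the one-line bound $R_n(m_n^{**})\geq\mathrm{tr}(\mathbf{P}_{m_n^{**}}\bm{\Omega})\geq c_1\nu_{m_n^{**}}\geq c_1 m_n^{**}$ followed by \eqref{eq:wmlb}. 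The only cost is that you lose the explicit intermediate bound $R_n(M_n)\geq\mathrm{tr}(\mathbf{P}_{m_n^{**}}\bm{\Omega})$ recorded as \eqref{eq:M12}, which the paper reuses later in the proof of Theorem~\ref{thm:delta2}; as a proof of Theorem~\ref{thm:mn} itself, however, your version is complete and cleaner. The caveat you flag about needing $M\leq d_n$ is a genuine subtlety in the phrasing of Assumption~5, but the paper's own proof makes the identical implicit use of that assumption, so you are no worse off.
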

Theorem \ref{thm:mn}(i) indicates that the index of the global optimal model diverges to infinity as $n\to \infty$ under some mild assumptions. For applications, this result tells us that 
a diverging dimension should be utilized to achieve a promising MS performance.
Theorem \ref{thm:mn}(ii) means that the smallest risks of MS and MA grow to infinity as the sample size $n$ increases. Note that Theorem \ref{thm:mn}(ii) does not require the restriction $M_n\geq m_n^{**}$, but this restriction is used in  \citet{peng2021}.

To investigate the impact of the number of candidate models $M_n$ on the comparison of MS and MA, we consider the following two conditions for $M_n$.

\begin{description}
	\item[Condition M1] (Too Small $M_n$). $\lim_{n\to \infty} M_n/m_n^{**}=0$.
	
	\item[Condition M2] (Large Enough $M_n$). There exists a constant $\underline{c}>0$ such that $M_n/m_n^{**}\geq \underline{c}$ when $n$ is large enough.
\end{description}

By Theorem \ref{thm:mn}(i), under Assumptions 1-6, Condition M1 holds when $M_n$ is fixed or diverges to infinity more slowly than $m_n^{**}$ as $n\to \infty$. Condition M2 holds when $M_n\geq m_n^{**}$ considered by \citet{peng2021} or $M_n<m_n^{**}$ but $M_n$ has the same order with $m_n^{**}$. Next, we  successively  explore the degree of improvement $\Delta_n/R_n(m_n^*)$ by the following theorems under Conditions M1 and M2.

\begin{theorem}[\textbf{Answer to Question Q2 under Condition M1}]\label{thm:delta2}
	Suppose that Assumptions 1-6 hold. Under Condition M1,
	$$\Delta_n=o\{R_n(m_n^*)\}.$$
\end{theorem}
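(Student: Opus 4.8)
The plan is to reduce both oracle risks to exact scalar expressions and then exploit the fact that under Condition M1 the candidate set lies entirely on the strictly decreasing branch of $R_n(\cdot)$, so the oracle model is the largest available one, $m_n^*=M_n$. First I would record the orthogonal-increment decomposition natural to the nested setting. Writing $\mathbf{D}_k=\mathbf{P}_k-\mathbf{P}_{k-1}$ (with $\mathbf{P}_0=\bm{0}$), the $\mathbf{D}_k$ are mutually orthogonal idempotents, so with $b_k=\bm{\mu}^\top\mathbf{D}_k\bm{\mu}$ and $v_k=\mathrm{tr}(\mathbf{D}_k\bm{\Omega})$ one has $R_n(m)=\sum_{k>m}b_k+\sum_{k\le m}v_k$ and $\theta_{n,m}=n^{-1}b_m/v_m$. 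For averaging I reparametrize the simplex by the tail weights $W_k=\sum_{m\ge k}w_m$, which range over $1=W_1\ge W_2\ge\cdots\ge W_{M_n}\ge 0$ and satisfy $\mathbf{P}(\bm{w})=\sum_{k=1}^{M_n}W_k\mathbf{D}_k$. Substituting into $R_n(\bm w)=\|(\mathbf{I}_n-\mathbf{P}(\bm w))\bm{\mu}\|^2+\mathrm{tr}(\mathbf{P}(\bm w)^2\bm{\Omega})$ (the cross term vanishes and $\mathbf{P}(\bm w)$ is symmetric) and using orthogonality of the $\mathbf{D}_k$ gives
\begin{equation*}
R_n(\bm w)=v_1+\sum_{k=2}^{M_n}\bigl\{(1-W_k)^2 b_k+W_k^2 v_k\bigr\}+\sum_{k>M_n}b_k .
\end{equation*}

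Next I would minimize this separable objective. Each summand is minimized at the unconstrained value $W_k^\circ=b_k/(b_k+v_k)\in(0,1)$, with minimum $b_k v_k/(b_k+v_k)$. The only coupling is the ordering constraint $W_2\ge\cdots\ge W_{M_n}$ together with $W_k\le W_1=1$; but $W_k^\circ=(b_k/v_k)/(b_k/v_k+1)$ is monotone in $b_k/v_k=n\theta_{n,k}$, which is non-increasing by Assumption 3, and each $W_k^\circ<1$. Hence the unconstrained optimizers already satisfy every constraint, so they are the oracle tail weights and
\begin{equation*}
R_n(\bm{w}_n^*)=v_1+\sum_{k=2}^{M_n}\frac{b_k v_k}{b_k+v_k}+\sum_{k>M_n}b_k .
\end{equation*}
Since $m_n^*=M_n$ under M1 yields $R_n(m_n^*)=\sum_{k\le M_n}v_k+\sum_{k>M_n}b_k$, subtracting gives the clean identity $\Delta_n=\sum_{k=2}^{M_n} v_k^2/(b_k+v_k)$, and in particular $0\le\Delta_n\le\sum_{k=1}^{M_n}v_k$.

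It remains to show this bound is of smaller order than $R_n(m_n^*)$. For the numerator, Assumption 2 gives $v_k<c_2(\nu_k-\nu_{k-1})$, and since $M_n<m_n^{**}\le d_n$ under M1 (the last inequality because $\theta_{n,m}=0$ for $m>d_n$ forces $R_n$ to increase there), Assumption 4 applies to groups $1,\dots,M_n$, so telescoping gives $\sum_{k=1}^{M_n}v_k<c_2\nu_{M_n}\le c_2 V M_n$. For the denominator I keep only the bias mass beyond $M_n$: $R_n(m_n^*)\ge\sum_{k>M_n}b_k$. On the decreasing branch, for every $M_n<k\le m_n^{**}$ we have $R_n(k)-R_n(k-1)=v_k(1-n\theta_{n,k})\le 0$, i.e.\ $b_k=n\theta_{n,k}v_k\ge v_k\ge c_1$, whence $\sum_{k>M_n}b_k\ge c_1(m_n^{**}-M_n)$. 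Combining,
\begin{equation*}
\frac{\Delta_n}{R_n(m_n^*)}\le\frac{c_2 V M_n}{c_1\,(m_n^{**}-M_n)}=\frac{c_2 V}{c_1}\cdot\frac{M_n/m_n^{**}}{1-M_n/m_n^{**}}.
\end{equation*}
Under Condition M1, $M_n/m_n^{**}\to 0$, so the right-hand side tends to $0$, giving $\Delta_n=o\{R_n(m_n^*)\}$.

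The main obstacle is the exact evaluation of $R_n(\bm{w}_n^*)$: everything hinges on recognizing that in the nested design the tail-weight parametrization simultaneously diagonalizes the bias and variance of $\hat{\bm{\mu}}(\bm w)$, and on checking that the monotone ordering constraint on the $W_k$ is inactive precisely because Assumption 3 makes the signal-to-noise ratios $n\theta_{n,k}$ non-increasing. Once the identity $\Delta_n=\sum_{k=2}^{M_n}v_k^2/(b_k+v_k)$ is in hand, the qualitative content of the theorem is an underfitting mechanism: when $M_n\ll m_n^{**}$ the candidate risks are still dominated by the unresolved bias $\sum_{k>M_n}b_k$, which is at least of order $m_n^{**}-M_n$, whereas the entire achievable averaging gain is at most the accumulated variance, of order $M_n$; the ratio is therefore controlled solely by $M_n/m_n^{**}$.
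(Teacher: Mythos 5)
Your proof is correct and follows essentially the same route as the paper: you derive the same identity $\Delta_n=\sum_{k=2}^{M_n}v_k^2/(b_k+v_k)$ from the orthogonal-increment decomposition (the paper's (A.4)--(A.9)), bound it above by $c_2VM_n$ via Assumptions 2 and 4, and lower-bound $R_n(m_n^*)$ using the fact that $n\theta_{n,k}\ge 1$ on the decreasing branch between $M_n$ and $m_n^{**}$ (the paper keeps the variance term as well to get $c_1 m_n^{**}$ rather than your $c_1(m_n^{**}-M_n)$, an immaterial difference since $M_n/m_n^{**}\to 0$). The argument and its conclusion match the paper's proof of this theorem.
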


Theorem \ref{thm:delta2} implies that when the number of candidate models $M_n$ is fixed or diverges to infinity more slowly than $m_n^{**}$ as $n\to \infty$, MA has no essential advantage over MS, which again indicates that in applications, an enough large number of candidate models should be utilized. 
Note that Theorem \ref{thm:delta2} does not require any assumption of the decaying order of $\{\theta_{n,m}\}_{m=1}^{d_n}$. 

Furthermore, when Condition M2 holds, we explore the degree of improvement $\Delta_n/R_n(m_n^*)$ by the following theorem under sensible conditions on $\theta_{n,m}$, which provides an answer to Question Q2 under Condition M2. The answer depends on the decaying order of $\{\theta_{n,m}\}_{m=1}^{d_n}$. 
\begin{description}
	\item[Condition A1] (Slowly Decaying $\{\theta_{n,m}\}_{m=1}^{d_n}$). There exist constants $k>1$, $0<\delta\leq \eta<1$ with $k\eta<1$, and $K>0$ such that for every integer sequence $\{l_n\}$ satisfying $\lim_{n\to \infty} l_n=\infty$, $$\delta\leq \theta_{n, \lfloor k l_n\rfloor}/\theta_{n, l_n}\leq \eta$$ 
	for any $n\geq K$.

\item[Condition A2] (Fast Decaying $\{\theta_{n,m}\}_{m=1}^{d_n}$). For every constant $k>1$ and every integer sequence $\{l_n\}$ satisfying $\lim_{n\to \infty} l_n=\infty$, $$\lim_{n\to \infty} \theta_{n, \lfloor kl_n\rfloor}/\theta_{n, l_n}=0.$$
\end{description}
In the model setting of \citet{peng2021}, since $\theta_{n,m}=\beta_m^2/\sigma^2$, Conditions A1--A2 are equivalent to the conditions 1--2 of \citet{peng2021}, respectively.

\begin{theorem}[\textbf{Answer to Question Q2 under Condition M2}]\label{thm:delta}
	Suppose that Assumptions 1-6 and Condition M2 holds. Under Condition A1, we have
	$$\Delta_n\asymp R_n(m_n^*).$$
	Under Condition A2, we have
	$$\Delta_n=o\{R_n(m_n^*)\}.$$
\end{theorem}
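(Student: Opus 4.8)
The plan is to turn both oracle risks into explicit, separable sums indexed by the orthogonal increments of the nested projections, and then read off the order of $\Delta_n$ from the decay of $\{\theta_{n,m}\}$. Writing $\mathbf{D}_l=\mathbf{P}_l-\mathbf{P}_{l-1}$, nestedness makes $\{\mathbf{D}_l\}$ mutually orthogonal symmetric idempotents with $\mathbf{P}_m=\sum_{l\le m}\mathbf{D}_l$, and expressing a simplex weight through its cumulative sums $W_l=\sum_{m\ge l}w_m$ gives $\mathbf{P}(\bm{w})=\sum_l W_l\mathbf{D}_l$ with $1=W_1\ge W_2\ge\cdots\ge W_{M_n}\ge 0$; this is a bijection onto $\mathcal{W}_n$. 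The first step is to expand $R_n(\bm{w})=\|(\mathbf{I}_n-\mathbf{P}(\bm{w}))\bm{\mu}\|^2+\mathrm{tr}\{\mathbf{P}(\bm{w})\bm{\Omega}\mathbf{P}(\bm{w})\}$ in this basis. Orthogonality removes all cross terms from the bias, while the cyclic trace identity collapses the variance to $\mathrm{tr}\{\mathbf{P}(\bm{w})^2\bm{\Omega}\}=\sum_l W_l^2\,\mathrm{tr}(\mathbf{D}_l\bm{\Omega})$ even for a general $\bm{\Omega}$ --- this is precisely the step that removes the orthonormal and homoscedastic restrictions. With $\tau_{n,l}=\mathrm{tr}(\mathbf{D}_l\bm{\Omega})$ (so $\tau_{n,l}\asymp 1$ by Assumptions 2 and 4) and $b_l=\bm{\mu}^\top\mathbf{D}_l\bm{\mu}=n\theta_{n,l}\tau_{n,l}$, I obtain $R_n(\bm{w})=\sum_{l\le M_n}\{(1-W_l)^2 b_l+W_l^2\tau_{n,l}\}+\sum_{l>M_n}b_l$ and, likewise, $R_n(m)=\sum_{l\le m}\tau_{n,l}+\sum_{l>m}b_l$, using that $\bm{\mu}$ lies in the span of the full design and $b_l=0$ for $l>d_n$.

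Next I would minimise this separable objective. The $l$th summand is minimised at $W_l^*=b_l/(b_l+\tau_{n,l})=n\theta_{n,l}/(n\theta_{n,l}+1)$, which is non-increasing in $l$ by Assumption 3 and bounded by the forced value $W_1=1$, so the coordinatewise minimiser already satisfies the monotonicity constraints and is exactly $\bm{w}_n^*$. Subtracting $R_n(\bm{w}_n^*)$ from $R_n(m_n^*)$, with $m_n^*=\min\{M_n,m_n^{**}\}$ from Assumption 6, the common tail bias cancels and leaves the closed form
\[
\Delta_n=\sum_{l=2}^{m_n^*}\frac{\tau_{n,l}}{n\theta_{n,l}+1}+\sum_{l>m_n^{**}}\frac{b_l^2}{b_l+\tau_{n,l}},
\]
where the second sum is present only when $M_n\ge m_n^{**}$ (it runs up to $\min\{M_n,d_n\}$) and is nonnegative. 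The whole comparison now rests on these two sums together with the benchmark $R_n(m_n^*)\asymp m_n^{**}$, which I would obtain from the variance lower bound $R_n(m_n^*)\ge\sum_{l\le m_n^*}\tau_{n,l}\ge c_1 m_n^*$, a matching bound on the residual bias, and $m_n^*\asymp m_n^{**}\to\infty$ under Condition M2 and Theorem \ref{thm:mn}(i). Throughout I would use the recursion $R_n(m)-R_n(m-1)=\tau_{n,m}(1-n\theta_{n,m})$ to place $m_n^{**}$ at the crossover $n\theta_{n,m_n^{**}}\ge 1\ge n\theta_{n,m_n^{**}+1}$.

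Under Condition A1 the estimate is a sandwich. The ratio bounds force $n\theta_{n,m_n^*}\asymp 1$, and on the window $l\in(\lfloor m_n^*/k\rfloor,m_n^*]$ condition A1 keeps $n\theta_{n,l}\asymp 1$, so each term of the first sum is bounded below by a constant while the window contains $\asymp m_n^*$ indices; hence the first sum is $\succeq m_n^*$, and the trivial bound $(n\theta_{n,l}+1)^{-1}<1$ gives that it is $O(m_n^*)$. A parallel polynomial-tail computation shows the second sum is likewise $\asymp m_n^*$, so altogether $\Delta_n\asymp m_n^{**}\asymp R_n(m_n^*)$.

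Under Condition A2 I expect the decisive difficulty. For the first sum, fix any $k>1$ and split at $l^*=\lfloor m_n^*/k\rfloor$: for $l\le l^*$ the fast-decay ratio $\theta_{n,m_n^*}/\theta_{n,l^*}\to 0$ together with $n\theta_{n,m_n^*}\ge 1$ forces $n\theta_{n,l^*}\to\infty$, so that block contributes $o(m_n^*)$, while the remaining block has fewer than $m_n^*(1-1/k)$ terms each below $1$; letting $k\downarrow 1$ yields $\sum_{l=2}^{m_n^*}\tau_{n,l}/(n\theta_{n,l}+1)=o(m_n^*)$. The genuine obstacle is the second (deep-tail) sum, because under Condition M2 the number of candidate models, and hence $d_n$, may greatly exceed $m_n^{**}$, so the crude bounds that sufficed above leave an $O(m_n^{**})$ remainder. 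Here I would prove a separate lemma converting the qualitative ratio condition A2 into quantitative control: each deep-tail summand satisfies $b_l^2/(b_l+\tau_{n,l})\le(n\theta_{n,l})^2\tau_{n,l}$, which is square-small because $n\theta_{n,l}\le 1$ there, and the level-set bound $\#\{l>m_n^{**}:\theta_{n,l}\ge t\,\theta_{n,m_n^{**}+1}\}=o(m_n^{**})$ for each fixed $t$ (itself obtained by taking $k\downarrow 1$ in A2) must be combined with this square-summability to show the deep tail is $o(m_n^{**})$. Establishing this tail estimate uniformly in $d_n$ is the crux; once it is in hand, $\Delta_n=o(m_n^{**})=o\{R_n(m_n^*)\}$ follows.
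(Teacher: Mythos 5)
Your reduction of both oracle risks to separable sums over the orthogonal increments $\mathbf{D}_l=\mathbf{P}_l-\mathbf{P}_{l-1}$, the identification of the coordinatewise minimiser $W_l^*=n\theta_{n,l}/(n\theta_{n,l}+1)$, and the resulting closed form for $\Delta_n$ all agree with the paper's derivation, and your lower bound for the first sum under Condition A1 (constant-order terms on the window $(\lfloor m_n^*/k\rfloor,m_n^*]$) is essentially the paper's argument with $t_n=\min\{t:\lfloor kt\rfloor\geq m_n^*+1\}$. There are, however, two genuine gaps. First, under Condition A1 the conclusion $\Delta_n\asymp R_n(m_n^*)$ requires the benchmark $R_n(m_n^*)\asymp m_n^*$, whose only nontrivial half is the upper bound on the residual bias $\bm{\mu}^{\top}(\mathbf{I}_n-\mathbf{P}_{m_n^*})\bm{\mu}=O(m_n^*)$; you assert this as ``a matching bound'' without an argument. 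This is precisely where the requirement $k\eta<1$ in Condition A1 enters: the paper blocks the tail into $[\lfloor k^s(m_n^*+1)\rfloor,\lfloor k^{s+1}(m_n^*+1)\rfloor)$, bounds $\theta_{n,\cdot}$ on block $s$ by $\eta^s\theta_{n,m_n^*+1}\leq \eta^s/n$, and sums the geometric series $\sum_s (k\eta)^s$. Without this step, $\Delta_n\succeq m_n^*$ does not give $\Delta_n\asymp R_n(m_n^*)$, since a priori the bias could dominate $m_n^*$.

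Second, under Condition A2 you correctly flag the deep-tail sum as the obstacle but leave it unresolved, and the route you sketch --- a level-set counting lemma showing the tail is $o(m_n^{**})$ ``uniformly in $d_n$'' --- aims at an unnecessarily strong target. The paper's resolution is much lighter: for any $k>1$, the portion of the second sum with $m>\lfloor k(m_n^*+1)\rfloor$ is bounded by $\bigl\{1+\theta_{n,m_n^*+1}/\theta_{n,\lfloor k(m_n^*+1)\rfloor}\bigr\}^{-1}\bm{\mu}^{\top}(\mathbf{I}_n-\mathbf{P}_{m_n^*})\bm{\mu}$, and Condition A2 sends the prefactor to zero, so the deep tail is $o$ of the residual bias --- which is itself a summand of $R_n(m_n^*)$ --- and hence $o\{R_n(m_n^*)\}$ with no counting over $d_n$ at all. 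The near portion is $O\{(k-1)m_n^*\}$ and one lets $k\downarrow 1$, as you do for the first sum. A smaller omission: Condition M2 also permits $\underline{c}\leq M_n/m_n^{**}<1$, where $m_n^*=M_n$ and the crossover inequality $n\theta_{n,m_n^*+1}\leq 1$ you rely on may fail; the paper patches this case by showing $\theta_{n,M_n+1}\leq \delta^{-\tau_1}/n$ for a finite $\tau_1$ with $k^{-\tau_1}\leq\underline{c}$, a subcase your sketch does not address.
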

From Theorems \ref{thm:mw} and \ref{thm:delta}, under Condition A1,
\begin{equation*}
	\frac{1}{2}\leq \liminf_{n\to \infty} \frac{R_n(\bm{w}_n^*)}{R_n(m_n^*)}\leq \limsup_{n\to \infty} \frac{R_n(\bm{w}_n^*)}{R_n(m_n^*)}\leq 1-c^*
\end{equation*}
for some $c^*\in (0, 1/2]$; and under Condition A2, $R_n(\bm{w}_n^*)\sim R_n(m_n^*)$. Therefore, when the number of candidate models $M_n$ is large enough, there is a phase transition for the advantage of MA over MS. When $\theta_{n,m}$ decays slowly in $m$, the oracle MA can reduce the optimal risk of MS by a substantial fraction; when $\theta_{n,m}$ decays fast in $m$, MA has no real advantage over MS.

To gain a better understanding of Conditions A1--A2, we make the following more simple conditions (i.e., Assumption 7 and Conditions B1--B2), which imply Conditions A1--A2 by Lemma \ref{lem:cond} below.
\begin{description}
	\item[Assumption 7.] There exists a sequence $\theta_m^{*}>0$ for $m=1,\ldots,d_n$ such that
	$$\sup_{1\leq m\leq d_n}\left|\frac{\theta_{n,m}}{\theta_{m}^*}-1\right|\to 0  \mathrm{~as~}n\to \infty.$$
\end{description}
\begin{description}
	\item[Condition B1] (Slowly Decaying $\theta_m^*$). There exist constants $k>1$ and $0<\delta^* \leq \eta^*<1$ with $k\eta^*<1$ such that $\delta^*\leq \theta_{\lfloor km \rfloor}^{*}/\theta_m^{*}\leq \eta^*$ when $m$ is large enough.
	\item[Condition B2] (Fast Decaying $\theta_m^*$). For every constant $k>1$, $\lim_{m\to \infty} \theta_{\lfloor km \rfloor}^{*}/\theta_m^{*}=0$.
\end{description}

\begin{lem}\label{lem:cond}
	Suppose that Assumption 7 holds. Then, Conditions B1--B2 imply Conditions A1--A2, respectively.
\end{lem}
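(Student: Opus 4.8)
The plan is to treat Assumption 7 as a bridge between the data-dependent quantities $\theta_{n,m}$ and the deterministic reference sequence $\theta_m^*$. Assumption 7 delivers a uniform (over $m\in\{1,\ldots,d_n\}$) multiplicative approximation of $\theta_{n,m}$ by $\theta_m^*$, and the whole proof is essentially the observation that such an approximation transfers ratio-decay behavior from $\{\theta_m^*\}$ to $\{\theta_{n,m}\}$ without distorting the structural constant $k$.

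First I would set $\epsilon_n=\sup_{1\leq m\leq d_n}\bigl|\theta_{n,m}/\theta_m^*-1\bigr|$, which tends to $0$ by Assumption 7. For $n$ large enough that $\epsilon_n<1$ this yields the two-sided bound $(1-\epsilon_n)\theta_m^*\leq\theta_{n,m}\leq(1+\epsilon_n)\theta_m^*$ for every $m\in\{1,\ldots,d_n\}$. Applying it to both the numerator index $\lfloor k l_n\rfloor$ and the denominator index $l_n$ produces the sandwich
$$\frac{1-\epsilon_n}{1+\epsilon_n}\cdot\frac{\theta_{\lfloor k l_n\rfloor}^*}{\theta_{l_n}^*}\leq\frac{\theta_{n,\lfloor k l_n\rfloor}}{\theta_{n,l_n}}\leq\frac{1+\epsilon_n}{1-\epsilon_n}\cdot\frac{\theta_{\lfloor k l_n\rfloor}^*}{\theta_{l_n}^*},$$
in which both prefactors converge to $1$ as $n\to\infty$. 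This single inequality drives both implications.

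For B1 $\Rightarrow$ A1 I keep the same $k$ as in Condition B1. Since any sequence $l_n\to\infty$ eventually passes the threshold beyond which B1 holds, I obtain $\delta^*\leq\theta_{\lfloor k l_n\rfloor}^*/\theta_{l_n}^*\leq\eta^*$ for large $n$; feeding this into the sandwich and using that the prefactors tend to $1$, I would select target constants $\delta\in(0,\delta^*)$ and $\eta\in(\eta^*,1/k)$, the latter interval being nonempty precisely because $k\eta^*<1$. Then $0<\delta\leq\eta<1$, $k\eta<1$, and for all sufficiently large $n$ the ratio $\theta_{n,\lfloor k l_n\rfloor}/\theta_{n,l_n}$ lies in $[\delta,\eta]$, which is exactly Condition A1. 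For B2 $\Rightarrow$ A2 I take an arbitrary $k>1$ and apply only the upper half of the sandwich: by B2 the factor $\theta_{\lfloor k l_n\rfloor}^*/\theta_{l_n}^*\to 0$ as $l_n\to\infty$, the prefactor tends to $1$, and nonnegativity of the ratio forces $\theta_{n,\lfloor k l_n\rfloor}/\theta_{n,l_n}\to 0$, which is Condition A2.

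The delicate point is bookkeeping of quantifiers and index ranges rather than any hard estimate. I must choose the slack between $(\delta^*,\eta^*)$ and $(\delta,\eta)$ so that it absorbs the $\tfrac{1\pm\epsilon_n}{1\mp\epsilon_n}$ factors once $n$ is large, and I must keep $\eta$ strictly below $1/k$ to preserve the constraint $k\eta<1$ that Condition A1 demands; this is exactly where the strict inequality $k\eta^*<1$ in B1 is indispensable. I would also verify that both $l_n$ and $\lfloor k l_n\rfloor$ remain inside $\{1,\ldots,d_n\}$ so that Assumption 7's uniform bound applies to them, and I would read the threshold ``$n\geq K$'' in A1 as taken after $l_n$ has grown past B1's threshold, so that it may depend on the sequence exactly as the per-sequence limit in A2 does.
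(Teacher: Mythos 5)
Your proposal is correct and follows essentially the same route as the paper's proof: both use the uniform multiplicative approximation from Assumption 7 to sandwich $\theta_{n,\lfloor k l_n\rfloor}/\theta_{n,l_n}$ between $\frac{1\mp\epsilon}{1\pm\epsilon}$ times $\theta^*_{\lfloor k l_n\rfloor}/\theta^*_{l_n}$, then absorb the vanishing prefactors into slightly enlarged constants $(\delta,\eta)$ for A1 (using $k\eta^*<1$ to keep $k\eta<1$) and pass to the limit for A2. The only cosmetic difference is that you track a vanishing sequence $\epsilon_n$ while the paper fixes a small $\epsilon$ with threshold $K_\epsilon$.
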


Assumption 7 implies that $\lim_{n\to \infty} \theta_{n,m}=\theta_m^*$ for any fixed $m$. Thus, Assumption 7 can lead to Assumption 5 by taking $\bar{\theta}_m=\theta_m^*/2$. Condition B1 is satisfied for $\theta_m^*\sim m^{-2\alpha}$ or slightly more generally for $\theta_m^*\sim m^{-2\alpha}(\log m)^{\beta}$ with constants $\alpha>1/2$ and $\beta\in \mathbb{R}$. Condition B2 is satisfied for the exponential-decay case, that is, $\theta_m^*\sim \exp(-c m)$ for some $c>0$. {
	These two types of decaying rates are commonly used in the literature. For example, in the research of infinite-order autoregressive (AR) models, \citet{ing2005}, \citet{ing2007}, and \citet{liao2021ma} considered the exponential-decay and algebraic-decay cases for the AR coefficients, which are described in our context as follows:
\begin{itemize}
	\item[(i)] Exponential-decay case: $C_1m^{-\tau_1}e^{-c m}\leq \theta_m^*\leq C_2m^{\tau_1}e^{-c m}$, where $C_1$, $C_2$, $\tau_1$, and $c$ are some constant with $C_2\geq C_1>0$, $\tau_1\geq 0$, and $c>0$.
	\item[(ii)] Algebraic-decay case: $(C_3-C_4m^{-\tau_2})m^{-\bar{\alpha}}\leq \theta_m^*\leq (C_3+C_4m^{-\tau_2})m^{-\bar{\alpha}}$, where $C_3$, $C_4$, $\tau_2$, and $\bar{\alpha}>1$ are some positive constants.
\end{itemize}
It can be easily verified that the exponential-decay case (i) and the algebraic-decay case (ii) satisfy Condition B2 and Condition B1, respectively.}


\subsection{A Comparison of Two Specific MS and MA Procedures}\label{subsec:comp2}

Up to now, the theoretical results of Theorems \ref{thm:mw} and \ref{thm:delta2}--\ref{thm:delta} mainly focus on the comparison of oracle optimal MS and MA, not directly on the comparison of two specific MS and MA procedures. 
Fortunately, by using 
\eqref{eq:asyopt} and \eqref{eq:asyoptnew}, we can do the latter comparison by connecting the feasible risks (when using a selected model index or weights from some methods) and infeasible risks (when using the oracle model index or weights).
%
%
In literature, the proof of asymptotic efficiency (or optimality) of MS and MA requires the smallest risks of MS and MA (i.e., $R_n(m_n^*)$ and $R_n(\bm{w}_n^*)$ in our notations) to grow to infinity as the sample size increases, respectively. Both have been verified in Theorem \ref{thm:mn}(ii) under Assumptions 1--6.

Let $\hat{m}_n$ and $\hat{\bm{w}}_n$ be the selected model index and chosen weights based on asymptotically optimal MS and MA methods, respectively. Then, we have the following two consequences.
\begin{corollary}\label{cor:main}
	Suppose that Assumptions 1-6 hold, and $\hat{m}_n$ and $\hat{\bm{w}}_n$ are asymptotically optimal in the sense of \eqref{eq:asyopt}, i.e., $R_n(\hat{m}_n)/R_n(m_n^*)\to_p 1$ and $R_n(\hat{\bm{w}}_n)/R_n(\bm{w}_n^*)\to_p 1$. Then, 
	\begin{itemize}
		\item[(i)] the risks using $\hat{m}_n$ and $\hat{\bm{w}}_n$ have the same order, i.e., $R_n(\hat{m}_n)\asymp_p R_n(\hat{\bm{w}}_n)$; 
	\item[(ii)] under Conditions M2 and A1, the MA using $\hat{\bm{w}}_n$ essentially improves over the MS using $\hat{m}_n$, i.e., $R_n(\hat{m}_n)-R_n(\hat{\bm{w}}_n)\asymp_p R_n(\hat{m}_n)$; 
	\item[(iii)] under Condition M1 or Conditions M2 and A2, $\hat{m}_n$ and $\hat{\bm{w}}_n$ are asymptotically equivalent in risk, i.e., $R_n(\hat{m}_n)\sim_p R_n(\hat{\bm{w}}_n)$.
\end{itemize}
\end{corollary}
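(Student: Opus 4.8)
The plan is to transfer the three deterministic statements about the oracle risks (Theorems \ref{thm:mw}, \ref{thm:delta2}, and \ref{thm:delta}) to the feasible risks by means of the asymptotic optimality assumption \eqref{eq:asyopt}. The starting observation is that $R_n(\hat m_n)/R_n(m_n^*)\to_p 1$ and $R_n(\hat{\bm w}_n)/R_n(\bm w_n^*)\to_p 1$ are equivalent, the limits being the nonzero constant $1$, to the representations $R_n(\hat m_n)=R_n(m_n^*)\{1+o_p(1)\}$ and $R_n(\hat{\bm w}_n)=R_n(\bm w_n^*)\{1+o_p(1)\}$, together with $R_n(m_n^*)=R_n(\hat m_n)\{1+o_p(1)\}$ and $R_n(\bm w_n^*)=R_n(\hat{\bm w}_n)\{1+o_p(1)\}$. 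Note that $\Delta_n$, $R_n(m_n^*)$, and $R_n(\bm w_n^*)$ are deterministic, all randomness entering only through $\hat m_n$ and $\hat{\bm w}_n$. All three parts then reduce to bookkeeping with the stochastic order symbols $\asymp_p$ and $\sim_p$ defined in Section \ref{sec:problem}.

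For part (i), Theorem \ref{thm:mw} supplies the deterministic relation $R_n(m_n^*)\asymp R_n(\bm w_n^*)$, i.e. $R_n(m_n^*)=O\{R_n(\bm w_n^*)\}$ and $R_n(\bm w_n^*)=O\{R_n(m_n^*)\}$. Substituting the representations above, I would write $R_n(\hat m_n)=R_n(m_n^*)\{1+o_p(1)\}=O\{R_n(\bm w_n^*)\}\{1+o_p(1)\}=O\{R_n(\hat{\bm w}_n)\}\{1+o_p(1)\}$, the last step folding the $\{1+o_p(1)\}$ factor relating $R_n(\bm w_n^*)$ and $R_n(\hat{\bm w}_n)$ into the overall $\{1+o_p(1)\}$. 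The symmetric computation yields $R_n(\hat{\bm w}_n)=O\{R_n(\hat m_n)\}\{1+o_p(1)\}$, which is exactly $R_n(\hat m_n)\asymp_p R_n(\hat{\bm w}_n)$.

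Part (ii) is the delicate one, and I expect it to be the main obstacle, because it involves subtracting two feasible risks, and a difference can destroy order information if the random errors are not controlled against the gap. Writing $R_n(\hat m_n)=R_n(m_n^*)(1+a_n)$ and $R_n(\hat{\bm w}_n)=R_n(\bm w_n^*)(1+b_n)$ with $a_n,b_n=o_p(1)$, I would expand $R_n(\hat m_n)-R_n(\hat{\bm w}_n)=\Delta_n+R_n(m_n^*)a_n-R_n(\bm w_n^*)b_n$, where $\Delta_n=R_n(m_n^*)-R_n(\bm w_n^*)$. The two remainder terms are each $o_p\{R_n(m_n^*)\}$: the first directly, the second because $R_n(\bm w_n^*)=O\{R_n(m_n^*)\}$ by Theorem \ref{thm:mw}. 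Invoking Theorem \ref{thm:delta} under Conditions M2 and A1 gives $\Delta_n\asymp R_n(m_n^*)$, so the difference equals $R_n(m_n^*)\{r_n+o_p(1)\}$ with the deterministic factor $r_n=\Delta_n/R_n(m_n^*)$ bounded away from $0$ and above (indeed $r_n\le 1/2$ by \eqref{eq:wmlb}). Dividing by $R_n(\hat m_n)=R_n(m_n^*)\{1+o_p(1)\}$ shows the ratio is bounded away from $0$ and $\infty$ with probability tending to one, which is precisely $R_n(\hat m_n)-R_n(\hat{\bm w}_n)\asymp_p R_n(\hat m_n)$. The crux is verifying that the $o_p\{R_n(m_n^*)\}$ errors are genuinely negligible relative to $\Delta_n$, which hinges on the two-sided bound $\Delta_n\asymp R_n(m_n^*)$ rather than merely $\Delta_n=O\{R_n(m_n^*)\}$.

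For part (iii), under Condition M1 (Theorem \ref{thm:delta2}) or Conditions M2 and A2 (Theorem \ref{thm:delta}) we have $\Delta_n=o\{R_n(m_n^*)\}$, hence the deterministic ratio $R_n(\bm w_n^*)/R_n(m_n^*)=1-\Delta_n/R_n(m_n^*)\to 1$, i.e. $R_n(m_n^*)\sim R_n(\bm w_n^*)$. Then $R_n(\hat m_n)/R_n(\hat{\bm w}_n)$ equals the product of the deterministic ratio $R_n(m_n^*)/R_n(\bm w_n^*)\to 1$ with a factor $\{1+o_p(1)\}/\{1+o_p(1)\}=1+o_p(1)$, so it converges in probability to $1$, giving $R_n(\hat m_n)\sim_p R_n(\hat{\bm w}_n)$. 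This settles all three parts, the only real work residing in the careful subtraction argument of part (ii).
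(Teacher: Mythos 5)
Your proposal is correct and follows essentially the same route as the paper: all three parts are obtained by transferring Theorem \ref{thm:mw} and Theorems \ref{thm:delta2}--\ref{thm:delta} to the feasible risks through the $\{1+o_p(1)\}$ factors supplied by \eqref{eq:asyopt}, with part (ii) resting, exactly as in the paper, on the two-sided bounds $c^*\leq \Delta_n/R_n(m_n^*)\leq 1/2$. The only difference is cosmetic: the paper organizes the computation around the ratio identity \eqref{eq:corol}, whereas you expand the difference $R_n(\hat m_n)-R_n(\hat{\bm w}_n)$ additively before dividing, which is algebraically equivalent.
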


\begin{corollary}\label{cor:mainnew}
	Suppose that Assumptions 1-6 hold, and $\hat{m}_n$ and $\hat{\bm{w}}_n$ are asymptotically optimal in the sense of  \eqref{eq:asyoptnew}, i.e.,  $E\{L_n(\hat{m}_n)\}/R_n(m_n^*)\to 1$ and $E\{L_n(\hat{\bm{w}}_n)\}/R_n(\bm{w}_n^*)\to 1$. Then, the results of Corollary \ref{cor:main} hold when $R_n(\hat{m}_n)$, $R_n(\hat{\bm{w}}_n)$, $\asymp_p$, and $\sim_p$ are replaced by $E\{L_n(\hat{m}_n)\}$, $E\{L_n(\hat{\bm{w}}_n)\}$, $\asymp$, and $\sim$, respectively.
\end{corollary}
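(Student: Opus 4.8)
The plan is to mirror the proof of Corollary \ref{cor:main}, replacing the stochastic asymptotic optimality \eqref{eq:asyopt} by its deterministic counterpart \eqref{eq:asyoptnew}. The crucial observation is that $E\{L_n(\hat{m}_n)\}$ and $E\{L_n(\hat{\bm{w}}_n)\}$ are nonrandom quantities, so every $o_p(1)$ appearing in the argument for Corollary \ref{cor:main} becomes an ordinary deterministic $o(1)$, and every $\asymp_p$ and $\sim_p$ becomes $\asymp$ and $\sim$. Concretely, \eqref{eq:asyoptnew} lets me write $E\{L_n(\hat{m}_n)\}=R_n(m_n^*)\{1+o(1)\}$ and $E\{L_n(\hat{\bm{w}}_n)\}=R_n(\bm{w}_n^*)\{1+o(1)\}$.

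Next I would assemble the three order facts already established earlier in the paper: Theorem \ref{thm:mw} gives $R_n(m_n^*)\asymp R_n(\bm{w}_n^*)$ unconditionally; Theorem \ref{thm:delta} gives $\Delta_n\asymp R_n(m_n^*)$ under Conditions M2 and A1; and Theorems \ref{thm:delta2} and \ref{thm:delta} give $\Delta_n=o\{R_n(m_n^*)\}$ under Condition M1 and under Conditions M2 and A2, respectively. For part~(i), combining $E\{L_n(\hat{m}_n)\}=R_n(m_n^*)\{1+o(1)\}$, $E\{L_n(\hat{\bm{w}}_n)\}=R_n(\bm{w}_n^*)\{1+o(1)\}$, and $R_n(m_n^*)\asymp R_n(\bm{w}_n^*)$ immediately yields $E\{L_n(\hat{m}_n)\}\asymp E\{L_n(\hat{\bm{w}}_n)\}$. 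For part~(iii), under Condition M1 or Conditions M2 and A2, $\Delta_n=o\{R_n(m_n^*)\}$ forces $R_n(\bm{w}_n^*)=R_n(m_n^*)-\Delta_n\sim R_n(m_n^*)$, so both $E\{L_n(\hat{m}_n)\}$ and $E\{L_n(\hat{\bm{w}}_n)\}$ are asymptotically equivalent to $R_n(m_n^*)$, giving $E\{L_n(\hat{m}_n)\}\sim E\{L_n(\hat{\bm{w}}_n)\}$.

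The one step requiring care is part~(ii). Here I would expand the difference as
\begin{equation*}
E\{L_n(\hat{m}_n)\}-E\{L_n(\hat{\bm{w}}_n)\}=\{R_n(m_n^*)-R_n(\bm{w}_n^*)\}+R_n(m_n^*)o(1)-R_n(\bm{w}_n^*)o(1),
\end{equation*}
and then use $R_n(\bm{w}_n^*)\asymp R_n(m_n^*)$ to absorb the last two terms into $o\{R_n(m_n^*)\}$, so that the difference equals $\Delta_n+o\{R_n(m_n^*)\}$. Since Condition A1 supplies $\Delta_n\asymp R_n(m_n^*)$ via Theorem \ref{thm:delta}, the leading term dominates the remainder and the difference is $\asymp R_n(m_n^*)\asymp E\{L_n(\hat{m}_n)\}$, as claimed.

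The main (and essentially only) obstacle is the subtraction in part~(ii): because two quantities of the same order $R_n(m_n^*)$ are differenced, the $o(1)$ errors must be shown not to swamp the genuine gap $\Delta_n$. This is precisely where the two-sided bound $\Delta_n\asymp R_n(m_n^*)$ from Theorem \ref{thm:delta} under Condition A1 is indispensable; without it the difference could collapse to a strictly smaller order. Everything else is a direct deterministic transcription of the stochastic argument for Corollary \ref{cor:main}, so no genuinely new estimate is needed.
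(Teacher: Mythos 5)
Your proposal is correct and matches the paper's intent: the paper omits the proof of this corollary, stating only that it is "similar" to that of Corollary \ref{cor:main}, and your argument is exactly that deterministic transcription (with the inputs $R_n(m_n^*)\asymp R_n(\bm{w}_n^*)$ from Theorem \ref{thm:mw} and the two-sided bound $\Delta_n\asymp R_n(m_n^*)$ from Theorem \ref{thm:delta} doing the same work as in the paper's identity \eqref{eq:corol}). The only cosmetic difference is that you decompose the difference $E\{L_n(\hat{m}_n)\}-E\{L_n(\hat{\bm{w}}_n)\}$ additively rather than normalizing by $E\{L_n(\hat{m}_n)\}$ as in \eqref{eq:corol}, which is equivalent.
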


Since the proofs of Corollaries \ref{cor:main} and \ref{cor:mainnew} are similar, we only give the proof of Corollary \ref{cor:main} in Appendix \ref{subsec:appcor}.  

\section{Comparisons of MAs with Different Weight Sets}\label{sec:relax}

In this section, we shall compare the optimal risks of MAs when the weights belong to three weight sets: $\mathcal{W}_n$, $\mathcal{Q}_n$, and $\mathcal{W}_n(N)$, which provide answers to Questions Q3--Q4.

\subsection{A Comparison of MAs with Weight Sets $\mathcal{W}_n$ and $\mathcal{Q}_n$}

In this subsection, we focus on the comparison of the risks of MA estimators when the weights comes from $\mathcal{W}_n$ and $\mathcal{Q}_n$, respectively. We first present the following theorem, which provides an answer to Question Q3.

\begin{theorem}[\textbf{Answer to Question Q3}]\label{thm:relax}
	Suppose that Assumptions 1-6 hold. Then, 
\begin{equation}\label{eq:relaxasy}
R_n(\bm{w}_n^*)- R_n(\tilde{\bm{w}}_n^*)=o\{R_n(\bm{w}_n^*)\},
\end{equation}
 i.e., $\bm{w}_n^*$ and $\tilde{\bm{w}}_n^*$ are asymptotically equivalent in risk.
\end{theorem}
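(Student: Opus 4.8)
The plan is to collapse both constrained optimizations onto one separable one-dimensional problem via the orthogonal-increment reparameterization of the hat matrices, so that the risk becomes a sum of independent convex terms. Write $\mathbf{D}_j=\mathbf{P}_j-\mathbf{P}_{j-1}$ for $j=1,\ldots,q_n$ (with $\mathbf{P}_0=\bm{0}$); by the nested structure $\mathbf{P}_j\mathbf{P}_l=\mathbf{P}_{\min(j,l)}$ these are mutually orthogonal symmetric idempotents, $\mathbf{D}_j\mathbf{D}_l=\delta_{jl}\mathbf{D}_j$. Set $a_j=\bm{\mu}^{\top}\mathbf{D}_j\bm{\mu}$ and $b_j=\mathrm{tr}(\mathbf{D}_j\bm{\Omega})$, so that the GVI reads $\theta_{n,j}=a_j/(n b_j)$. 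For a weight vector $\bm{w}$ I introduce the cumulative (top-down) weights $W_j=\sum_{m=j}^{M_n}w_m$, which give $\mathbf{P}(\bm{w})=\sum_{m=1}^{M_n}w_m\mathbf{P}_m=\sum_{j=1}^{M_n}W_j\mathbf{D}_j$. Using $\mathbf{P}_{q_n}\bm{\mu}=\bm{\mu}$ and the vanishing of the cross terms, the standard bias-variance decomposition yields
\begin{equation*}
R_n(\bm{w})=\sum_{j=1}^{M_n}\bigl[(1-W_j)^2 a_j+W_j^2 b_j\bigr]+\sum_{j=M_n+1}^{q_n}a_j,
\end{equation*}
where the final sum is independent of $\bm{w}$.

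Next I would solve both problems in the $W$-coordinates. Each summand $f_j(W_j)=(1-W_j)^2a_j+W_j^2 b_j$ is strictly convex in a single variable and (ignoring constraints) is minimized at $W_j^{\circ}=a_j/(a_j+b_j)=n\theta_{n,j}/(n\theta_{n,j}+1)\in[0,1)$. By Assumption 3 the sequence $j\mapsto\theta_{n,j}$ is non-increasing, hence $\{W_j^{\circ}\}$ is non-increasing with increments in $[0,1]$; this is exactly the feasibility condition for $\mathcal{Q}_n$ read through $w_m=W_m-W_{m+1}$ (with $W_{M_n+1}=0$). Thus the per-coordinate optimizer is already feasible for the hypercube, so $\tilde{W}_j=W_j^{\circ}$ for all $j$ and $f_j(\tilde{W}_j)=a_jb_j/(a_j+b_j)$. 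For $\mathcal{W}_n$ the only additional requirement is $W_1=1$; the coordinates $j\ge 2$ remain free and their per-coordinate optima $W_j^{\circ}$ automatically satisfy $1\ge W_2^{\circ}\ge\cdots\ge W_{M_n}^{\circ}\ge 0$, hence remain optimal. Consequently the two oracle weights coincide in every coordinate except $j=1$, where $W_1^{*}=1$ while $\tilde{W}_1=W_1^{\circ}$.

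It then follows that only the first term survives in the risk difference:
\begin{equation*}
R_n(\bm{w}_n^*)-R_n(\tilde{\bm{w}}_n^*)=f_1(1)-f_1(W_1^{\circ})=b_1-\frac{a_1 b_1}{a_1+b_1}=\frac{b_1^2}{a_1+b_1}\le b_1,
\end{equation*}
and $b_1=\mathrm{tr}(\mathbf{P}_1\bm{\Omega})\le c_2\nu_1\le c_2 V$ is bounded by Assumptions 2 and 4, so the left-hand side is $O(1)$. Since Theorem \ref{thm:mn}(ii) gives $R_n(\bm{w}_n^*)\to\infty$, dividing through yields $\{R_n(\bm{w}_n^*)-R_n(\tilde{\bm{w}}_n^*)\}/R_n(\bm{w}_n^*)\to 0$, which is \eqref{eq:relaxasy}. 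The one delicate point is the feasibility check in the second step: one must confirm that the separable unconstrained optimizer respects the monotonicity encoded in both weight sets, and it is precisely Assumption 3 that makes $\{W_j^{\circ}\}$ monotone, so that discarding the sum-to-one constraint perturbs only the leading coordinate. Everything else is routine bookkeeping with the orthogonal decomposition.
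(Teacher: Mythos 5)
Your proposal is correct and follows essentially the same route as the paper: the orthogonal-increment reparameterization $\gamma_m=\sum_{j=m}^{M_n}w_j$ (your $W_j$), the observation that under Assumption 3 the separable per-coordinate minimizers are feasible for both weight sets so the two oracle solutions differ only in the first coordinate, the resulting identity $R_n(\bm{w}_n^*)-R_n(\tilde{\bm{w}}_n^*)=\{\mathrm{tr}(\mathbf{P}_1\bm{\Omega})\}^2/\{\bm{\mu}^{\top}\mathbf{P}_1\bm{\mu}+\mathrm{tr}(\mathbf{P}_1\bm{\Omega})\}\leq \mathrm{tr}(\mathbf{P}_1\bm{\Omega})<c_2\nu_1$, and the appeal to Theorem \ref{thm:mn}(ii). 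Your explicit feasibility check of the monotone cumulative weights is a welcome elaboration of what the paper compresses into ``From \eqref{eq:riskRw} and Assumption 3, it is easy to see.''
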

Equation \eqref{eq:relaxasy} indicates that while the weight relaxation could lead to a smaller optimal risk of MA, asymptotically it does not provide any substantial benefit. Note that Theorem \ref{thm:relax} does not require any assumptions of the number of candidate models $M_n$ and the decaying order of $\{\theta_{n,m}\}_{m=1}^{d_n}$.

Furthermore, we compare two specific asymptotically optimal MS and MA procedures, where MA weights are chosen from the weight set $\mathcal{Q}_n$. Let $\hat{\bm{w}}_n^Q$ be chosen weights based on a specific MA method 
 satisfying the asymptotic optimality \eqref{eq:asyopt} or \eqref{eq:asyoptnew}
 when the total weight is not constrained
 (i.e., the total weight constraint $\sum_{m=1}^{M_n}w_m=1$ is not used).  
 For example, the asymptotic optimality \eqref{eq:asyopt} of JMA without the total weight constraint 
 has been established by \citet{ando2014} and \citet{zhao2016} for independent data and dependent data, respectively. Using Theorem \ref{thm:relax}, it is easy to conclude the following corollary on a comparison of MA without the total weight constraint and MS.
\begin{corollary}\label{cor:main2}
	{
	Suppose that Assumptions 1-6 hold.
	\begin{itemize}
		\item[(i)] Assume that $\hat{m}_n$ and $\hat{\bm{w}}_n^Q$ satisfy $R_n(\hat{m}_n)/R_n(m_n^*)\to_p 1$ and $R_n(\hat{\bm{w}}_n^Q)/R_n(\bm{w}_n^*)\to_p 1$, then the results of Corollary \ref{cor:main} hold when $\hat{\bm{w}}_n$ is replaced by $\hat{\bm{w}}_n^Q$.
		\item[(ii)] Assume that $\hat{m}_n$ and $\hat{\bm{w}}_n^Q$ satisfy $E\{L_n(\hat{m}_n)\}/R_n(m_n^*)\to 1$ and $E\{L_n(\hat{\bm{w}}_n^Q)\}/R_n(\bm{w}_n^*)\to 1$, then the results of Corollary \ref{cor:mainnew} hold when $\hat{\bm{w}}_n$ is replaced by $\hat{\bm{w}}_n^Q$.
	\end{itemize}}
\end{corollary}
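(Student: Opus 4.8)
The plan is to obtain both parts of Corollary \ref{cor:main2} as direct reductions to Corollaries \ref{cor:main} and \ref{cor:mainnew}, with Theorem \ref{thm:relax} supplying the one bridging fact that makes the stated hypotheses natural. The central observation is that the conditions listed in Corollary \ref{cor:main2} are structurally identical to those of Corollaries \ref{cor:main} and \ref{cor:mainnew}, only with $\hat{\bm{w}}_n$ renamed $\hat{\bm{w}}_n^Q$: part (i) posits $R_n(\hat{m}_n)/R_n(m_n^*)\to_p 1$ and $R_n(\hat{\bm{w}}_n^Q)/R_n(\bm{w}_n^*)\to_p 1$, which are exactly the two optimality relations in \eqref{eq:asyopt}. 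Hence, once these hold, I would invoke Corollary \ref{cor:main} verbatim with $\hat{\bm{w}}_n^Q$ in the role of $\hat{\bm{w}}_n$ to recover all three of its conclusions (the same-order relation, the essential improvement under Conditions M2 and A1, and the asymptotic risk equivalence under Condition M1 or under Conditions M2 and A2). Part (ii) is proved identically by quoting Corollary \ref{cor:mainnew}, replacing $R_n(\hat{m}_n)$ and $R_n(\hat{\bm{w}}_n)$ by the loss-based quantities and $\asymp_p$, $\sim_p$ by $\asymp$, $\sim$.

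The only genuine content is to justify why the relation $R_n(\hat{\bm{w}}_n^Q)/R_n(\bm{w}_n^*)\to_p 1$ (and its loss analogue) is the correct hypothesis for a method that optimizes over the unconstrained set $\mathcal{Q}_n$, and this is where Theorem \ref{thm:relax} is used. A $\mathcal{Q}_n$-optimal procedure, such as JMA without the total weight constraint \citep{ando2014,zhao2016}, is asymptotically optimal relative to the $\mathcal{Q}_n$-oracle $\tilde{\bm{w}}_n^*$, i.e.\ $R_n(\hat{\bm{w}}_n^Q)/R_n(\tilde{\bm{w}}_n^*)\to_p 1$. I would then decompose
\begin{equation*}
\frac{R_n(\hat{\bm{w}}_n^Q)}{R_n(\bm{w}_n^*)}=\frac{R_n(\hat{\bm{w}}_n^Q)}{R_n(\tilde{\bm{w}}_n^*)}\cdot\frac{R_n(\tilde{\bm{w}}_n^*)}{R_n(\bm{w}_n^*)},
\end{equation*}
in which the first factor converges to $1$ in probability by the $\mathcal{Q}_n$-optimality, while the second converges to $1$ because Theorem \ref{thm:relax} gives $R_n(\bm{w}_n^*)-R_n(\tilde{\bm{w}}_n^*)=o\{R_n(\bm{w}_n^*)\}$ and hence $R_n(\tilde{\bm{w}}_n^*)/R_n(\bm{w}_n^*)=1-o(1)$. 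Multiplying the two factors yields $R_n(\hat{\bm{w}}_n^Q)/R_n(\bm{w}_n^*)\to_p 1$, which is precisely the hypothesis required above; the loss-based statement for part (ii) follows in the same way with deterministic limits.

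I expect no serious obstacle, since the argument is a reduction and the substantive analysis already resides in Theorem \ref{thm:relax} and in Corollaries \ref{cor:main} and \ref{cor:mainnew}. The points that need care are purely bookkeeping: I would appeal to Theorem \ref{thm:mn}(ii) to ensure $R_n(\bm{w}_n^*)\to\infty$, so that the displayed ratios are well defined and the product of a convergence in probability with a deterministic convergence behaves as claimed; and I would note that each conclusion of Corollary \ref{cor:main} touches $\hat{\bm{w}}_n$ only through the optimality relation $R_n(\hat{\bm{w}}_n)/R_n(\bm{w}_n^*)\to_p 1$, which $\hat{\bm{w}}_n^Q$ now satisfies, so the conclusions transfer without change.
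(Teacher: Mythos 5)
Your proposal is correct and matches the paper's intended argument: the paper itself only remarks that the corollary follows ``using Theorem \ref{thm:relax},'' and your reduction --- writing $R_n(\hat{\bm{w}}_n^Q)/R_n(\bm{w}_n^*)$ as the product of the $\mathcal{Q}_n$-optimality ratio and $R_n(\tilde{\bm{w}}_n^*)/R_n(\bm{w}_n^*)\to 1$ from Theorem \ref{thm:relax}, then quoting Corollaries \ref{cor:main} and \ref{cor:mainnew} verbatim --- is exactly that argument spelled out. No gaps.
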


\subsection{A Comparison of MAs with Weight Sets $\mathcal{W}_n$ and $\mathcal{W}_n(N)$}

In this subsection, we focus on the comparison of the optimal risks of MA estimators with the weights belonging to the weight sets $\mathcal{W}_n$ and $\mathcal{W}_n(N)$, respectively. We present the following theorem on an upper bound of $R_n(\bm{w}_{n,N}^*)- R_n(\bm{w}_n^*)$ and an answer to Question Q4.

\begin{theorem}[\textbf{Answer to Question Q4}]\label{thm:discrete}
	Suppose that Assumptions 1-6 hold. Then, for any sufficiently large $n$,
		\begin{equation}\label{eq:ub}
			R_n(\bm{w}_{n,N}^*)- R_n(\bm{w}_n^*)\leq \frac{1}{2N}R_n(m_n^*).
		\end{equation}
Furthermore, under Conditions M2 and A1, we have
$$
R_n(\bm{w}_{n,N}^*)- R_n(\bm{w}_n^*)\asymp R_n(\bm{w}_n^*);
$$
and under Condition M1 or Conditions M2 and A2, we have $$R_n(\bm{w}_{n,N}^*)- R_n(\bm{w}_n^*)=o\{R_n(\bm{w}_n^*)\},$$
i.e., 
$$
R_n(\bm{w}_{n,N}^*)\sim R_n(\bm{w}_n^*). 
$$
\end{theorem}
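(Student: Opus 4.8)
The plan is to pass to the cumulative (tail-sum) weight coordinates $W_j=\sum_{m=j}^{M_n}w_m$, under which the nested structure becomes transparent: writing $\mathbf{D}_j=\mathbf{P}_j-\mathbf{P}_{j-1}$, the family $\{\mathbf{D}_j\}$ consists of mutually orthogonal idempotents, $\mathbf{P}(\bm{w})=\sum_j W_j\mathbf{D}_j$, and $R_n(\bm{w})$ is the convex quadratic $\sum_j(1-W_j)^2 b_j+\sum_{j,k}W_jW_k\Omega_{jk}+\|(\mathbf{I}_n-\mathbf{P}_{M_n})\bm\mu\|^2$ with $b_j=\bm\mu^\top\mathbf{D}_j\bm\mu$ and $\Omega_{jk}=\mathrm{tr}(\mathbf{D}_j\bm\Omega\mathbf{D}_k)$. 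In these coordinates $\mathcal{W}_n$ is the set of profiles $1=W_1\ge\cdots\ge W_{M_n}\ge0$ and $\mathcal{W}_n(N)$ is the same set intersected with the grid $\{0,1/N,\dots,1\}$. Before anything else I would record the sandwich $R_n(\bm{w}_n^*)\le R_n(\bm{w}_{n,N}^*)\le R_n(\bm{e}_{m_n^*})=R_n(m_n^*)$, the upper inequality holding because the model vertex $\bm{e}_{m_n^*}$ lies in $\mathcal{W}_n(N)$ for every $N\ge1$. Combined with Theorem \ref{thm:mw} this already settles the negligible case: under Condition M1 or Conditions M2 and A2, Theorems \ref{thm:delta2} and \ref{thm:delta} give $\Delta_n=o\{R_n(m_n^*)\}=o\{R_n(\bm{w}_n^*)\}$, so $0\le R_n(\bm{w}_{n,N}^*)-R_n(\bm{w}_n^*)\le\Delta_n=o\{R_n(\bm{w}_n^*)\}$ and hence $R_n(\bm{w}_{n,N}^*)\sim R_n(\bm{w}_n^*)$.

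For the finite-sample bound \eqref{eq:ub} I would round each $W_j^*$ to the nearest grid multiple of $1/N$, obtaining a profile $\tilde{\bm W}$. Because rounding is a nondecreasing map, monotonicity and $\tilde W_1=1$ are preserved, so $\tilde{\bm W}$ corresponds to a legitimate $\tilde{\bm w}\in\mathcal{W}_n(N)$ and $R_n(\bm{w}_{n,N}^*)\le R_n(\tilde{\bm w})$. The crucial point is that equal inputs round to equal outputs, so $W_m^*=W_{m+1}^*$ forces $\tilde W_m=\tilde W_{m+1}$; hence $\mathrm{supp}(\tilde{\bm w})\subseteq\mathrm{supp}(\bm{w}_n^*)$. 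Expanding the exact quadratic $R_n$ about the simplex-minimizer $\bm{w}_n^*$, the first-order term $\nabla R_n(\bm{w}_n^*)^\top(\tilde{\bm w}-\bm{w}_n^*)$ reduces, through the KKT conditions for minimization over $\mathcal{W}_n$, to $\sum_m\xi_m\tilde w_m$ with multipliers $\xi_m>0$ only off $\mathrm{supp}(\bm{w}_n^*)$, and the support containment kills it. What survives is the curvature term $\sum_j\Delta_j^2 b_j+\mathrm{tr}(\mathbf{P}(\bm\delta)\bm\Omega\mathbf{P}(\bm\delta))$ with $\bm\delta=\tilde{\bm w}-\bm{w}_n^*$ and $\Delta_j=\tilde W_j-W_j^*$ satisfying $|\Delta_j|\le1/(2N)$; the remaining work is to compress this into $\tfrac{1}{2N}R_n(m_n^*)$. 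I expect this to be the fussiest step: the cross terms $\Omega_{jk}$, absent from the orthonormal model of \citet{peng2021}, force me to handle the full variance block, and Assumption 2 is what lets me sandwich it between $c_1$ and $c_2$ times its diagonal. A useful sanity check is $N=1$, where \eqref{eq:ub} must recover the constant $\tfrac12$ of Theorem \ref{thm:mw}.

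For the substantial statement under Conditions M2 and A1 the upper direction is immediate from \eqref{eq:ub} together with $R_n(m_n^*)\asymp R_n(\bm{w}_n^*)$; the real content is the matching lower bound. Here I would reuse the quadratic expansion, but now as a lower bound valid for every $\bm{w}\in\mathcal{W}_n(N)$: the nonnegativity of the first-order term plus $\lambda_{\min}(\bm\Omega)>c_1$ give $R_n(\bm{w})-R_n(\bm{w}_n^*)\ge\sum_j\Delta_j^2 b_j+\mathrm{tr}(\mathbf{P}(\bm\delta)\bm\Omega\mathbf{P}(\bm\delta))\ge c_1\sum_j\Delta_j^2\ge c_1\sum_j\mathrm{dist}(W_j^*,N^{-1}\mathbb{Z})^2$, the last inequality because the minimizer's cumulative coordinates are grid-valued. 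It then suffices to show $\sum_j\mathrm{dist}(W_j^*,N^{-1}\mathbb{Z})^2\succeq m_n^{**}$, which is of the order of $R_n(\bm{w}_n^*)$ by the estimates behind Theorems \ref{thm:mn} and \ref{thm:delta}. This is exactly where slow decay operates: Condition A1 forces $\theta_{n,j}\asymp1/n$ — and therefore $W_j^*$ bounded inside a fixed subinterval of $(0,1)$ — throughout a band of indices $j\asymp m_n^{**}$ (which lies in $\{1,\dots,M_n\}$ precisely because Condition M2 keeps $M_n\succeq m_n^{**}$), and along this band consecutive $W_j^*$ differ by a vanishing amount, so for a fixed positive fraction of the band $W_j^*$ sits at distance $\asymp1/N$ from the grid; as $N$ is a fixed constant the sum is of exact order $m_n^{**}$.

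The main obstacle is twofold, and both pieces are artifacts of leaving Peng's orthonormal, homoscedastic world. First, extracting the exact constant $1/(2N)$ in \eqref{eq:ub}: without orthonormality the risk no longer separates across coordinates, so I cannot optimize $W_j$ one at a time and must instead bound the full quadratic form, tracking constants sharply enough to reproduce $\tfrac12$ at $N=1$; Assumptions 2 and 4 (bounded spectrum of $\bm\Omega$ and bounded group sizes) are the levers that reduce this form to its diagonal up to fixed constants. Second, and harder, is justifying the sweep used in the lower bound: the convenient closed form $W_j^*=\theta_{n,j}/(\theta_{n,j}+n^{-1})$ is only exact in the separable case, so I would first need a quantitative description of the genuine minimizer $\bm{w}_n^*$ — a monotone near-minimizer of the perturbed quadratic — showing that its profile still lingers in a fixed subinterval of $(0,1)$ over $\asymp m_n^{**}$ indices, and only then count the grid-far coordinates. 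I anticipate borrowing the structural control of $\bm{w}_n^*$ and the order of $R_n(\bm{w}_n^*)$ developed in the proofs of Theorems \ref{thm:mw} and \ref{thm:delta}.
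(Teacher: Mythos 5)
Your overall architecture (pass to the cumulative weights $\gamma_m=\sum_{j\ge m}w_j$, round coordinatewise, treat the negligible case via the sandwich $R_n(\bm{w}_n^*)\le R_n(\bm{w}_{n,N}^*)\le R_n(m_n^*)$ together with Theorems \ref{thm:delta2}--\ref{thm:delta}) matches the paper, and the negligible case is complete as you state it. But the two hard parts rest on a misconception that then leaves them unproved. You assert that ``without orthonormality the risk no longer separates across coordinates'' and that the cross terms $\Omega_{jk}=\mathrm{tr}(\mathbf{D}_j\bm{\Omega}\mathbf{D}_k)$ (with $\mathbf{D}_j=\mathbf{P}_j-\mathbf{P}_{j-1}$, as in your notation) must be handled. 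In fact $\Omega_{jk}=\mathrm{tr}(\mathbf{D}_k\mathbf{D}_j\bm{\Omega})=0$ for $j\neq k$: nestedness alone makes the $\mathbf{D}_j$ mutually orthogonal idempotents, and orthonormality of the design is irrelevant. Consequently $R_n(\bm{w})$ is \emph{exactly} the separable quadratic $\sum_m\{(1-\gamma_m)^2 b_m+\gamma_m^2 v_m\}+\bm{\mu}^{\top}(\mathbf{I}_n-\mathbf{P}_{M_n})\bm{\mu}$ with $b_m=\bm{\mu}^{\top}\mathbf{D}_m\bm{\mu}$ and $v_m=\mathrm{tr}(\mathbf{D}_m\bm{\Omega})$, the closed form $\gamma_{n,m}^*=b_m/(b_m+v_m)=\theta_{n,m}/(\theta_{n,m}+n^{-1})$ is exact (not ``only exact in the separable case''), and the discrete optimum is obtained by coordinatewise rounding, so that $R_n(\bm{w}_{n,N}^*)-R_n(\bm{w}_n^*)=\sum_{m\ge2}(b_m+v_m)(\tilde\gamma_m-\gamma_{n,m}^*)^2$ with no first-order term to dispose of; this is precisely Lemma \ref{lem:RwnN} of the paper.

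Given that identity, the step you explicitly leave open --- ``compress the curvature term into $\tfrac{1}{2N}R_n(m_n^*)$'' --- is the entire content of \eqref{eq:ub}, and the bound $|\tilde\gamma_m-\gamma_{n,m}^*|\le 1/(2N)$ alone cannot deliver it: it yields $\tfrac{1}{4N^2}\sum_m(b_m+v_m)$, and $\sum_m v_m\asymp\nu_{M_n}$ can dwarf $R_n(m_n^*)$ when $M_n\gg m_n^*$. The missing ingredient is the sharper pair of inequalities $(\tilde\gamma-\gamma)^2\le\tfrac{1}{2N}(1-\gamma)$ and $(\tilde\gamma-\gamma)^2\le\tfrac{1}{2N}\gamma$ for the nearest grid point $\tilde\gamma$, which, via $b_m+v_m=v_m/(1-\gamma_{n,m}^*)=b_m/\gamma_{n,m}^*$ and a split at $m_n^*$, give $\tfrac{1}{2N}[\mathrm{tr}\{(\mathbf{P}_{m_n^*}-\mathbf{P}_1)\bm{\Omega}\}+\bm{\mu}^{\top}(\mathbf{P}_{M_n}-\mathbf{P}_{m_n^*})\bm{\mu}]\le\tfrac{1}{2N}R_n(m_n^*)$. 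Similarly, your lower bound under Conditions M2 and A1 is declared contingent on a structural description of $\bm{w}_n^*$ that you say you do not have (but which the closed form already supplies), and your counting claim --- that consecutive $\gamma_{n,m}^*$ differ by a vanishing amount so a fixed fraction of a band sits at distance $\asymp 1/N$ from the grid --- does not follow from Condition A1, which controls $\theta_{n,\lfloor kj\rfloor}/\theta_{n,j}$ only along geometric scales and permits non-vanishing jumps between consecutive indices. The paper avoids this by restricting to $m\le m_n(\tfrac{2N-1}{2N})$, where $\gamma_{n,m}^*>1-\tfrac{1}{2N}$ forces $\tilde\gamma_m=1$ and the per-coordinate loss is exactly $(1-\gamma_{n,m}^*)v_m$, and then lower-bounds the sum by $\asymp m_n^{**}\asymp R_n(\bm{w}_n^*)$ with the same geometric-blocking argument as in the proof of Theorem \ref{thm:delta}. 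As written, your proposal establishes only the third assertion of the theorem.
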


Observe that $R_n(\bm{w}_{n,1}^*)=R_n(m_n^*)$. Therefore, Theorems \ref{thm:mw} and \ref{thm:delta2}--\ref{thm:delta} are special cases of Theorem \ref{thm:discrete} with $N=1$. The upper bound in \eqref{eq:ub} implies that for a fixed and sufficiently large sample size, $R_n(\bm{w}_{n,N}^*)$ can be made arbitrarily close to $R_n(\bm{w}_n^*)$ as $N$ is large enough, which is expected since $\mathcal{W}_n(N)$ approaches $\mathcal{W}_n$ as close as possible by making $N$ sufficiently large. From Theorem \ref{thm:discrete}, when $M_n$ is large enough and $\theta_{n,m}$ decays slowly in $m$, restricting the weight set $\mathcal{W}_n$ to $\mathcal{W}_n(N)$ can enlarge the optimal risk of MA by a substantial multiple; when $M_n$ is too small or $M_n$ is large enough and $\theta_{n,m}$ decays fast in $m$, MA restricted to the discrete weight set has no real disadvantage over MA with $\mathcal{W}_n$. In the Supplementary Material, we further present a comparison of MAs with nested discrete weight sets.

\section{Two Examples}\label{sec:twoexm}

In this section, we provide two examples to verify the theoretical results of Theorems \ref{thm:delta2}--\ref{thm:delta} and \ref{thm:discrete}, whose detailed derivations are given in Appendix \ref{subsec:examp}. Given $a, b>0$, the incomplete beta function is defined by $B(x;a,b)=\int_0^x t^{a-1}(1-t)^{b-1}\,dt$ for $0\leq x\leq 1$.

 In these two examples, we consider the model setting of \citet{peng2021}, that is model 
 \eqref{eq:model} with orthonormal design assumption \eqref{eq:orthcond}, 
 homoscedastic and uncorrelated error terms 
 and $\nu_m=m$ for $m=1,\ldots,p_n$.

\noindent  \textbf{Example \arabic{section}.1 (Slowly Decaying $\theta_{n,m}$).} The true coefficients are set to $\beta_m=m^{-\alpha}$ with $\alpha>1/2$. Then, $\theta_{n,m}=m^{-2\alpha}/\sigma^2$ and Condition A1 is satisfied. 
By some simple calculations, we can get 
 $m_n^{**}\sim (\frac{n}{\sigma^2})^{\frac{1}{2\alpha}}$. 
 We consider the following two situations on the number of candidate models $M_n$.

(i) When $\lim_{n\to \infty} M_n/m_n^{**}=0$, we have
\begin{equation*}
	\frac{1}{n}R_n(\bm{w}_{n,N}^*)\sim \frac{1}{n}R_n(\bm{w}_n^*)\sim 
	\begin{cases}
		\sum_{m=M+1}^{\infty} m^{-2\alpha}, & \text{if}~M_n\equiv M~\text{is fixed as}~n\to \infty,\\
		\frac{M_n^{-2\alpha+1}}{2\alpha-1}, & \text{if}~\lim_{n\to \infty} M_n= \infty~\text{but}~M_n=o(m_n^{**}).
	\end{cases}
\end{equation*}
This verifies that $R_n(\bm{w}_{n,N}^*)\sim R_n(\bm{w}_n^*)$ under Condition M1, which accords with 
Theorem \ref{thm:delta2} and 
the third conclusion of Theorem \ref{thm:discrete}. 

(ii) When $M_n/m_n^{**}\geq \underline{c}$ for some $\underline{c}>0$, we have
\begin{equation*}
	\frac{1}{n}R_n(\bm{w}_{n,N}^*)\asymp \frac{1}{n}R_n(\bm{w}_n^*)\asymp n^{-\frac{2\alpha-1}{2\alpha}}.
\end{equation*}
By a simple calculation, we know that $R_n(\bm{w}_{n,N}^*)- R_n(\bm{w}_n^*)$ is lower bounded by $\frac{\varpi\sigma^2}{2^{2\alpha+1}\varpi^{-2\alpha}+2} \left(\frac{n}{\sigma^2}\right)^{\frac{1}{2\alpha}}$, where $\varpi=\min\{\underline{c},(2N-1)^{-\frac{1}{2\alpha}}\}$. Moreover, if $\lim_{n\to \infty} M_n/m_n^{**}= \kappa$, $\kappa\in (0,\infty]$ and $M_n=o(p_n)$, we have
\begin{equation*}
	\lim_{n\to \infty} \frac{R_n(\bm{w}_n^*)}{R_n(\bm{w}_{n,N}^*)}=\frac{1}{\psi_N^*+\frac{\kappa^{-2\alpha+1}}{2\alpha}} \left[\frac{2\alpha-1}{4\alpha^2}\left\{ \frac{\pi}{\sin(\frac{\pi}{2\alpha})}-B\left(\frac{1}{1+\kappa^{2\alpha}}; 1-\frac{1}{2\alpha},\frac{1}{2\alpha}\right)\right\}+\frac{\kappa^{-2\alpha+1}}{2\alpha}\right],
\end{equation*}
where $\psi_N^*$ is defined in Appendix \ref{subsec:examp}. Furthermore, we can show that
\begin{equation*}
	\lim_{n\to \infty} \frac{R_n(\bm{w}_n^*)}{R_n(\bm{w}_{n,N}^*)}<1,
\end{equation*}
which verifies that $R_n(\bm{w}_{n,N}^*)- R_n(\bm{w}_n^*)\asymp R_n(\bm{w}_n^*)$, which accords with 
the first conclusion of 
Theorem \ref{thm:delta} 
and the second conclusion of Theorem \ref{thm:discrete}. Figure \ref{fig:ratio}(a) plots $\lim_{n\to \infty} \frac{R_n(\bm{w}_n^*)}{R_n(\bm{w}_{n,N}^*)}$ againsts $N\in \{1,\ldots,10\}$ for $\kappa=0.5, 1, 2$ and Figure \ref{fig:ratio}(b) plots $\lim_{n\to \infty} \frac{R_n(\bm{w}_n^*)}{R_n(\bm{w}_{n,N}^*)}$ againsts $\kappa\in (0,8)$ for $N=1, 2, 4$, where $\alpha=0.8$, which
further verifies that $\lim_{n\to \infty} \frac{R_n(\bm{w}_n^*)}{R_n(\bm{w}_{n,N}^*)}<1$.

\noindent  \textbf{Example \arabic{section}.2 (Fast Decaying $\theta_{n,m}$).} The true coefficients are set to $\beta_m=\exp(-cm)$ with $c>0$. Then, $\theta_{n,m}=\exp(-2cm)/\sigma^2$ and Condition A2 is satisfied. The global optimal model should include the first $m_n^{**}\sim \frac{1}{2c} \log \left(\frac{n}{\sigma^2}\right)$ terms. We consider the following three situations on the number of candidate models $M_n$.

(i) When $\limsup_{n\to \infty} M_n/m_n^{**}<1$, we have
\begin{equation*}
	\frac{1}{n}R_n(\bm{w}_{n,N}^*)\sim \frac{1}{n}R_n(\bm{w}_n^*)\sim \frac{\exp(-2cM_n)}{\exp(2c)-1},
\end{equation*}
which verifies that $R_n(\bm{w}_{n,N}^*)\sim R_n(\bm{w}_n^*)$ under Condition M1 and 
accords with 
Theorem \ref{thm:delta2} and the third conclusion of Theorem \ref{thm:discrete}.

(ii) When $M_n<m_n^{**}$ for any sufficiently large $n$ but $\lim_{n\to \infty} M_n/m_n^{**}=1$, we have
\begin{equation*}
	\frac{1}{n}R_n(\bm{w}_{n,N}^*)\sim \frac{1}{n}R_n(\bm{w}_n^*)\sim \frac{1}{2c} \frac{\sigma^2}{n}\log \left(\frac{n}{\sigma^2}\right)+\frac{\exp(-2cM_n)-\exp(-2cp_n)}{\exp(2c)-1},
\end{equation*}
which
 verifies that $R_n(\bm{w}_{n,N}^*)\sim R_n(\bm{w}_n^*)$ under Conditions M2 and A2, and 
accords with the second conclusion of Theorem \ref{thm:delta} and the third conclusion of Theorem \ref{thm:discrete}.

(iii) When $M_n\geq m_n^{**}$ for any sufficiently large $n$, we have
\begin{equation*}
	\frac{1}{n}R_n(\bm{w}_{n,N}^*)\sim \frac{1}{n}R_n(\bm{w}_n^*)\sim \frac{1}{2c} \frac{\sigma^2}{n}\log \left(\frac{n}{\sigma^2}\right).
\end{equation*}
which also
verifies that $R_n(\bm{w}_{n,N}^*)\sim R_n(\bm{w}_n^*)$ under Conditions M2 and A2, and 
accords with the second conclusion of Theorem \ref{thm:delta} and the third conclusion of Theorem \ref{thm:discrete}.


\begin{figure}[htpb]
	\centering
	\includegraphics[scale=0.8]{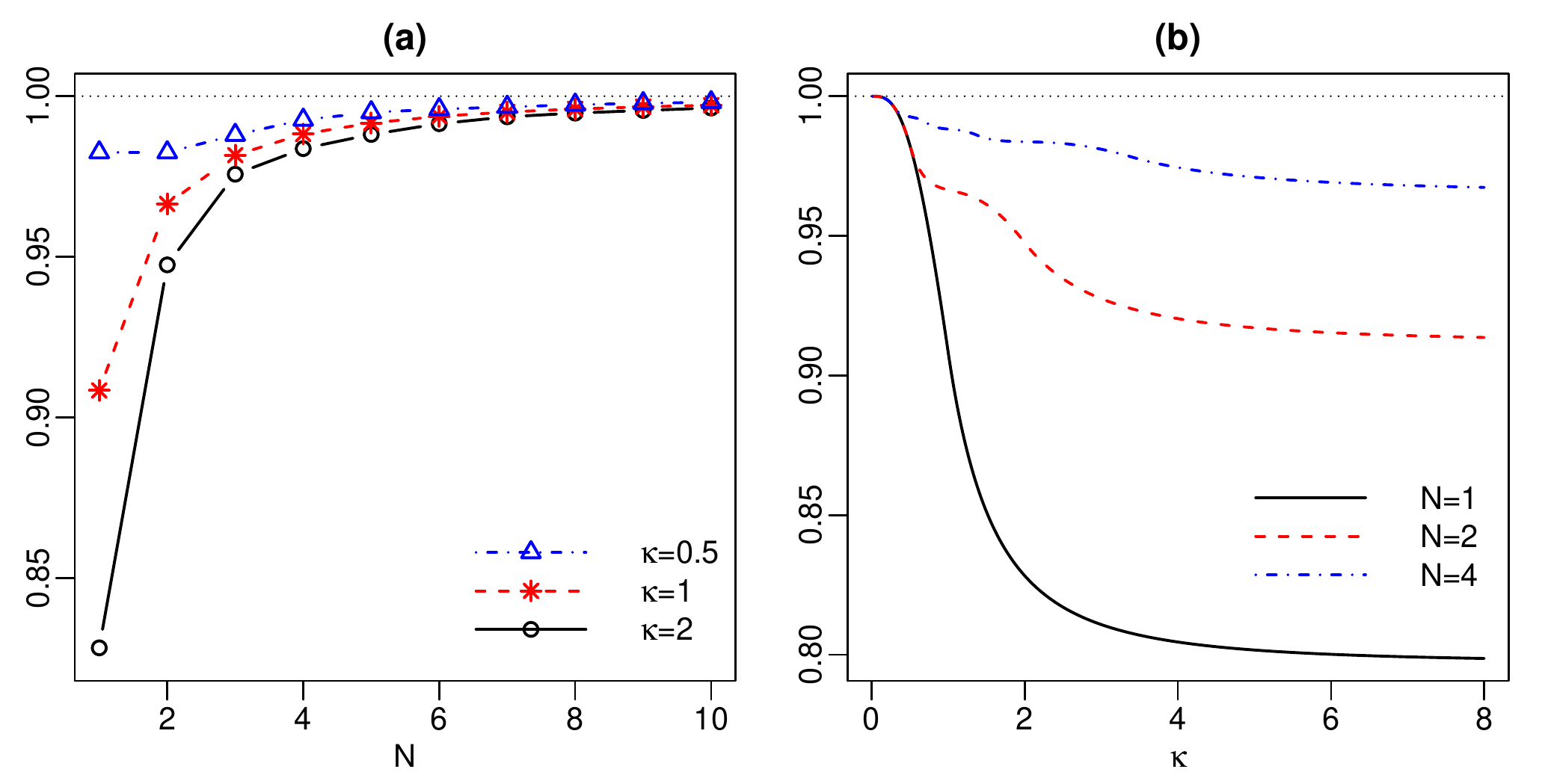}
	\caption{Numerical illustration for Example \arabic{section}.1 with $\alpha=0.8$. Left: plots of $\lim_{n\to \infty} \frac{R_n(\bm{w}_n^*)}{R_n(\bm{w}_{n,N}^*)}$ against $N\in \{1,\ldots,10\}$ for $\kappa=0.5, 1, 2$, respectively. Right: plots of $\lim_{n\to \infty} \frac{R_n(\bm{w}_{n,dN}^*)}{R_n(\bm{w}_{n,d}^*)}$ againsts $\kappa\in (0, 8)$ for $N=1, 2, 4$, respectively.}\label{fig:ratio}
\end{figure}

\section{Simulation Studies}\label{sec:simu}

In this section, we conduct several simulation studies to verify the theoretical results presented in Corollaries \ref{cor:main} and \ref{cor:main2}, where the specific MS and MA methods are compared.
%
Here we choose AIC, BIC, and LOO-CV as MS methods, and MMA and JMA for MA methods.  Specifically, we use the following three examples:
\begin{itemize}
	\item {\bf Example 1}: {\bf General nested framework} (i.e., $\nu_m\neq m$),
	homoscedastic and uncorrelated errors, and  (approximately) orthonormal design; 
		\item {\bf Example 2}: Typical nested framework (i.e., $\nu_m= m$),
		{\bf heteroscedastic and autocorrelated errors},
		and (approximately) orthonormal design;
			\item {\bf Example 3}: Typical nested framework (i.e., $\nu_m= m$),
			homoscedastic and uncorrelated errors,
			and {\bf non-orthonormal design}.  
\end{itemize}
To evaluate the estimators, we compute the risks of the competing methods by computing averages across 1000 replications. Supplementary materials contain more simulation studies. \\

\noindent  \textbf{Example 1 (General nested framework)} We use the same set-up as that of \citet{peng2021} except the coefficients $\beta_m$'s. Specifically, suppose the data come from the model \eqref{eq:model}, where $p_n=\lfloor 5n^{2/3}\rfloor$, $x_{i1}=1$, the remaining $x_{ij}$ are independent and identically distributed (iid) from $N(0,1)$, and the random errors $\varepsilon_i$ are iid from $N(0,\sigma^2)$ and are independent of $x_{ij}$'s. The population $R^2$ is denoted by $R^2=\mathrm{var}(\mu_i)/\mathrm{var}(y_i)$, which is controlled in $0.25$, $0.5$ or $0.75$ via the parameter $\sigma^2$. We consider a more general nested model setting than that of \citet{peng2021} by setting
\begin{equation*}
	\nu_m=\begin{cases}
		5 \lfloor m/2\rfloor+2, & m~\text{is odd} \\
		5 \lfloor m/2\rfloor, & m~\text{is even}
	\end{cases},
\quad m=1,\ldots,q_n-1
\end{equation*}
and $\nu_{q_n}=p_n$. Thus, the size of the $m$th group of predictors is 2 when $m$ is odd and 3 when $m$ is even, $m=1,\ldots,q_n-1$. We consider two cases with different coefficient decaying orders: 
\begin{itemize}
	\item Case 1. $\beta_j= m^{-\alpha_1}$ when $x_{ij}$ is in the $m$th group and $\alpha_1$ is set to be 1, 1.5 or 2.
	\item Case 2. $\beta_j= \exp(-\alpha_2 m)$ when $x_{ij}$ is in the $m$th group and $\alpha_2$ is set to be 1, 1.5 or 2.
\end{itemize}
For Case 1, we know that $\theta_{n,m}$ converges to $\theta_m^*= m^{-2\alpha_1}/\sigma^2$ in probability and then Condition B1 is satisfied (in our theories, we take the predictors as fixed, but in simulation, they are random. When verifying our conditions, we do not consider this randomness), and for Case 2, $\theta_{n,m}$ converges to $\theta_m^*= \exp(-2\alpha_2 m)/\sigma^2$ in probability and then Condition B2 is satisfied. The sample size $n$ varies at 50, 500, 1000, 2000, 3000, and 4000. The number of candidate models is determined by $M_n=\mathrm{INT}(3n^{1/3})$, where the function $\mathrm{INT}(a)$ returns the nearest integer from $a$. In each simulation setting of the combination of $n$, $R^2$, and $\alpha_1$ (or $\alpha_2$), we normalize the risks of the MS methods by dividing by the risk of MMA. 

Simulation results are summarized in Figures \ref{fig:example1_1}--\ref{fig:example1_2}. In each figure, the simulation results with three different coefficient decaying orders are displayed in rows (a), (b), and (c), respectively. Note that Figure \ref{fig:example1_1} are under Case 1 (slowly decaying coefficients) and Figure \ref{fig:example1_2} are under Case 2 (fast decaying coefficients). Since both AIC and LOO-CV are asymptotically optimal for Example 1, as expected, their performances are very close for large sample size. In the slowly decaying $\theta_m^*$ case, the performance gap between AIC (or LOO-CV) and MMA does not vanish when $n$ increases, while in the fast decaying $\theta_m^*$ case, it becomes very small when $n$ is large, which are consistent with the results of Corollary \ref{cor:main}.

Following \citet{peng2021}, we also include BIC in our simulation, although often, BIC is not asymptotically optimal.
In Case 1, the advantage of AIC over BIC becomes increasingly larger as $n$ increases from 50 to 4000, while in Case 2 with fast decaying $\theta_m^*$, BIC is competitive with AIC in some scenarios. This phenomenon was also observed by \citet{peng2021}.\\

\noindent  \textbf{Example 2 (Heteroscedastic and autocorrelated errors)} 
The setting of this example is the same as that of Example 1 except that
the typical nested framework with $\nu_m=m$
and  heteroscedastic and autocorrelated errors are considered. 
%
We utilize
 the same error process of \citet{zhang2013JMA} that is both heteroscedastic and autocorrelated. Specifically, the error process is given by $\varepsilon_i=\varepsilon_{i1}+\varepsilon_{i2}$, $\varepsilon_{i1}$'s are independent observations from the $N(0,x_{i2}^2)$ distribution, and $\varepsilon_{i2}$ follows an AR(1) process with an autocorrelation coefficient $\rho_1=0.5$, where $\varepsilon_{i2}=\rho_1\varepsilon_{i-1,2}+e_i$, $\varepsilon_{12}\sim N(0,1)$, and $e_i$'s are iid from $N(0,1-\rho_1^2)$ and are independent of $\varepsilon_{i2}$'s. Then, the covariance matrix of $\bm{\varepsilon}$ given $x_{ij}$'s is $\bm{\Omega}=\bm{\Omega}_1+\bm{\Omega}_2$, where $\bm{\Omega}_1=\mathrm{diag}\{x_{12}^2,\ldots,x_{n2}^2\}$ and $\bm{\Omega}_2=(\rho_1^{|k-l|})_{k,l=1,\ldots,n}$. By a simple calculation, we have
\begin{equation*}
	\mathrm{tr}( \mathbf{P}_m \bm{\Omega}_1) \xrightarrow{p}
	\begin{cases}
		1, & \nu_m=1 \\
		\nu_m+2, & \nu_m\geq 2
	\end{cases},\quad 
\mathrm{tr}( \mathbf{P}_m \bm{\Omega}_2) \xrightarrow{p} \frac{2\rho_1}{1-\rho_1}+\nu_m.
\end{equation*}
Therefore, for any fixed $m$, $\theta_{n,m}\to \theta_m^*= \beta_m^2/\zeta_m$ in probability, where 
\begin{equation*}
\zeta_m=\begin{cases}
	\frac{2}{1-\rho_1}, & m=1, \\
	4, & m=2, \\
	2, & m\geq 3.
\end{cases}
\end{equation*}
We consider two cases with different decaying orders of $\theta_m^*$: 
\begin{itemize}
	\item Case 1 (With $\theta_m^*$ satisfying Condition B1). Here, $\beta_m=c\sqrt{\zeta_m} m^{-\alpha_1}$ and $\alpha_1$ is set to be 1, 1.5 or 2.
	\item Case 2 (With $\theta_m^*$ satisfying Condition B2). Here, $\beta_m=c\sqrt{\zeta_m} \exp(-\alpha_2 m)$ and $\alpha_2$ is set to be 1, 1.5 or 2.
\end{itemize}
As \cite{hansen2012},
the parameter $c$ was selected to control the
approximate population $\tilde{R}^2=c^2/(1+c^2)$ to vary on $0.25$, $0.5$, and $0.75$. The sample size is varied among $n=500$, 1000, 2000, 3000, 4000, and 5000. To verify the results in Corollary \ref{cor:main2}, we include JMA without the restriction $\sum_{m=1}^{M_n} w_m=1$, denoted by JMA2 
as a competing method. In each simulation setting of combination of $n$, $\tilde{R}^2$, and $\alpha_1$ (or $\alpha_2$), we normalize the risks of the MS methods and JMA2 by dividing by the risk of JMA.

Simulation results are shown in Figures \ref{fig:example2_1}--\ref{fig:example2_2}. In each figure, the simulation results with three different decaying orders of $\theta_m^*$ are displayed in rows (a), (b), and (c), respectively. From these results, we see that in the slowly decaying $\theta_m^*$ case (Figure \ref{fig:example2_1}), the performance gap between LOO-CV and JMA does not vanish when sample size increases, while in the fast decaying $\theta_m^*$ case (Figure \ref{fig:example2_2}), it becomes very small when the sample size is large, which are consistent with the results of Corollary \ref{cor:main}. Note that from Figure \ref{fig:example2_2}, the performances of AIC and JMA are not  consistently close since AIC may not be  asymptotically optimal in Example 2 because of heteroscedastisity. Another observation is that the performances of JMA2 and JMA are very close when $n$ is sufficiently large, which verify the results in Corollary \ref{cor:main2}. Moreover, we can observe the same phenomena as that in Example 1 for a comparison of AIC and BIC.\\

\noindent  \textbf{Example 3 (Non-orthonormal design)} 
The setting of this example is the same as that of Example 1 except that
the typical nested framework with $\nu_m=m$
and predictors are non-orthonormal. 
Specifically, 
 the predictors $(x_{i1},\ldots,x_{ip_n})^{\top}$, $i=1,\ldots,n$, are iid normal random vectors with zero mean and covariance matrix between $k$th and $l$th elements being $\rho_2^{|k-l|}$, and the random errors $\varepsilon_i$ are iid from $N(0,\sigma^2)$ and are independent of $x_{ij}$'s. Here, $\rho_2$ is set to be $0.5$. 
 It is easy to prove that for any fixed $m$, \begin{equation*}
	\frac{1}{n} \bm{\mu}^{\top} \mathbf{P}_m \bm{\mu} \xrightarrow{p} \lim_{n\to \infty} \bm{\beta}_{p_n}^{\top}\bm{\Sigma}_{m\times p_n}^{\top}\bm{\Sigma}_{m\times m}^{-1}\bm{\Sigma}_{m\times p_n} \bm{\beta}_{p_n},
\end{equation*}
where $\bm{\Sigma}_{d_1\times d_2}$ is a $d_1\times d_2$ matrix with $(k,l)$th element being $\rho_2^{|k-l|}$ and $\bm{\beta}_{p_n}=(\beta_1,\ldots,\beta_{p_n})^{\top}$. It follows that $\theta_{n,m}=\bm{\mu}^{\top}( \mathbf{P}_m- \mathbf{P}_{m-1})\bm{\mu}/(n\sigma^2) \to \theta_m^*=\xi_m/\sigma^2$ in probability, where $\xi_1=\lim_{n\to \infty} (\bm{\Sigma}_{1\times p_n}\bm{\beta}_{p_n})^2$ and
\begin{equation*}
	\xi_m=\lim_{n\to \infty} \bm{\beta}_{p_n}^{\top} \left\{\bm{\Sigma}_{m\times p_n}^{\top}\bm{\Sigma}_{m\times m}^{-1}\bm{\Sigma}_{m\times p_n}-\bm{\Sigma}_{(m-1)\times p_n}^{\top}\bm{\Sigma}_{(m-1)\times (m-1)}^{-1}\bm{\Sigma}_{(m-1)\times p_n}\right\} \bm{\beta}_{p_n}
\end{equation*}
for $m=2,\ldots,p_n$. By some calculations, we can obtain simple forms of $\xi_m$ as follows 
\begin{equation}\label{eq:exmp3}
	\xi_1=\lim_{n\to \infty} \left(\sum_{j=1}^{p_n} \beta_j \rho_2^{j-1}\right)^2,\quad \xi_m=\lim_{n\to \infty} (1-\rho_2^2)\left(\sum_{j=m}^{p_n} \beta_j \rho_2^{j-m}\right)^2,~m\geq 2.
\end{equation}
We consider two cases with different decaying orders of $\theta_m^*$: 
\begin{itemize}
	\item Case 1 (With $\theta_m^*$ satisfying Condition B1). Here, $\xi_m=m^{-2\alpha_1}$ and $\alpha_1$ is set to be 1, 1.5 or 2.
	\item Case 2 (With $\theta_m^*$ satisfying Condition B2). Here, $\xi_m=\exp(-2\alpha_2 m)$ and $\alpha_2$ is set to be 1, 1.5 or 2.
\end{itemize}
We can set different coefficient $\beta_j$ via \eqref{eq:exmp3} such that Case 1 and Case 2 hold respectively. Without loss of generality, we assume $\beta_j\geq 0$ for all $j$. Then from \eqref{eq:exmp3}, we have
\begin{equation*}
	\beta_1=\sqrt{\xi_1}-\rho_2\sqrt{\frac{\xi_2}{1-\rho_2^2}}\quad \text{and} \quad \beta_j=\frac{\sqrt{\xi_j}-\rho_2\sqrt{\xi_{j+1}}}{\sqrt{1-\rho_2^2}},\quad j\geq 2.
\end{equation*}
The sample size $n$ varies at 50, 500, 1000, 2000, 3000, and 4000. In each simulation setting of the combination of $n$, $R^2$, and $\alpha_1$ (or $\alpha_2$), we normalize the risks of the MS methods by dividing by the risk of MMA. Figures \ref{fig:example3_1}--\ref{fig:example3_2} display the simulation results for Case 1 and Case 2, respectively. In each figure, the simulation results with three different decaying orders of $\theta_m^*$ are displayed in rows (a), (b), and (c), respectively. From these results, we can see the same observations as that in Example 1, which also verify the previous theoretical findings.

In above examples, we use the decaying orders of GVI to determine the performance of MAs and MSs and
set large enough $M_n$, i.e., (Condition M2 is satisfied). In Section \label{sim:new}
of the Supplementary Material, 
we further design Example 4 to verify Corollary \ref{cor:main} when $M_n$ is too small (Condition M1 is satisfied). 

\section{Conclusion}\label{sec:concl}

This paper extends the work of \citet{peng2021} on comparison of MS and MA to a general model setting, where we allow the predictors are non-orthonormal, the error terms are heteroscedastic and autocorrelated,
and some predictors are totally unimportant. We obtain the results that the number of candidate models $M_n$ and the decaying order of $\{\theta_{n,m}\}_{m=1}^{d_n}$ determine when MA is better than MS. Specifically, when $M_n$ is large enough and $\theta_{n,m}$ decays slowly in $m$, the benefit of MA over MS is real; when $M_n$ is too small or $M_n$ is large enough and $\theta_{n,m}$ decays fast in $m$, the risks of MA and MS are asymptotically equivalent. 
 Furthermore, the obtained results are extended to compare MAs with the weights belonged to three different weight sets.

Along with the paper, there are a few open questions. First, an interesting issue is how to order the predictors and prepare nested candidate models such that the risk gain of MA is 
optimal. Although various procedures are proposed to order the predictors in the implementation of MA such as forward selection approach \citep{claeskens2006}, the marginal correlation \citep{ando2014,ando2017,zhang2016}, and solution path algorithm of penalized regression \citep{zhang2020PMA,feng2020nested}, there is still a lack of theoretical study on an optimal way of ordering the predictors. Second, it is interesting to develop a data-driven way to choose the number of the candidate models. Another appealing direction is to compare MS and MA in the non-nested model setting. However, in this case, it is difficult to characterize the unknown optimal model $m_n^*$ and weights $\bm{w}_n^*$. A deeper and detailed investigation of these issues warrants further studies.

\renewcommand{\theequation}{{A.\arabic{equation}}}
\renewcommand{\thelemma}{{\it A.\arabic{lemma}}}
\renewcommand{\thesubsection}{{\it A.\arabic{subsection}}}
\setcounter{equation}{0}

\section*{Appendix}

\subsection{Proof of Theorem \ref{thm:mw}}\label{sec:thm1}
The risk of the $m$th candidate model is
	\begin{eqnarray*}
	R_n(m)\hspace{-6mm}&&=E\|\hat{\bm{\mu}}_m-\bm{\mu}\|^2\nonumber\\
	&&=\mathrm{tr}\left[ E\left\{(\hat{\bm{\mu}}_m-\bm{\mu})(\hat{\bm{\mu}}_m-\bm{\mu})^\top\right\}\right]\nonumber\\
	&&=\mathrm{tr}\left\{ (E\hat{\bm{\mu}}_m-\bm{\mu})(E\hat{\bm{\mu}}_m-\bm{\mu})^\top\right\} + \mathrm{tr}\left\{\mathrm{var}(\hat{\bm{\mu}}_m)\right\}\nonumber\\
	&&=\bm{\mu}^{\top}(\mathbf{I}_n-\mathbf{P}_m)\bm{\mu}+\mathrm{tr}(\mathbf{P}_m\bm{\Omega}).
\end{eqnarray*}
Observe that
\begin{equation}\label{eq:diffrisk}
	R_n(m)-R_n(m-1)=\mathrm{tr}\{( \mathbf{P}_m- \mathbf{P}_{m-1})\bm{\Omega}\}-\bm{\mu}^{\top} ( \mathbf{P}_m- \mathbf{P}_{m-1}) \bm{\mu}=n\mathrm{tr}\{( \mathbf{P}_m- \mathbf{P}_{m-1})\bm{\Omega}\}\biggl(\frac{1}{n}-\theta_{n,m}\biggr),
\end{equation}
where $\theta_{n,m}$ is defined in \eqref{eq:index}. Note that $\mathrm{tr}\{( \mathbf{P}_m- \mathbf{P}_{m-1})\bm{\Omega}\}>0$ and $\{\theta_{n,m}\}_{m=1}^{q_n}$ is non-increasing from Assumption 3. Then, under Assumption 6, it is easy to see that the global optimal model $m_n^{**}$ that minimizes $R_n(m)$ on $\{1,\ldots,d_n\}$ satisfies
\begin{equation}\label{eq:optm2}
	\theta_{n,m_n^{**}}>\frac{1}{n}\geq \theta_{n,m_n^{**}+1}.
\end{equation}
Hence, the risk of the optimal model $m_n^*$ is
\begin{equation}\label{eq:riskms}
	R_n(m_n^*)=\bm{\mu}^{\top}( \mathbf{I}_n- \mathbf{P}_{m_n^*})\bm{\mu}+\mathrm{tr}( \mathbf{P}_{m_n^*}\bm{\Omega}),
\end{equation}
where when $M_n<m_n^{**}$, $m_n^*=M_n$ and $\theta_{n,m_n^*}>1/n$; when $M_n\geq m_n^{**}$, $m_n^*=m_n^{**}$ and
\begin{equation}\label{eq:optm}
	\theta_{n,m_n^*}>\frac{1}{n}\geq \theta_{n,m_n^*+1}.
\end{equation}

The risk of the MA estimator with weights $\bm{w}$ is 
\begin{equation*}
	\begin{split}
		R_n(\bm{w})&=E\|\hat{\bm{\mu}}(\bm{w})-\bm{\mu}\|^2=E\| \mathbf{P}(\bm{w})\bm{y}-\bm{\mu}\|^2 \\
		&=\bm{\mu}^{\top}\{ \mathbf{P}(\bm{w})- \mathbf{I}_n\}^2\bm{\mu}+\mathrm{tr}\{ \mathbf{P}^2(\bm{w})\bm{\Omega}\}.
	\end{split}
\end{equation*}
Rewrite $ \mathbf{P}(\bm{w})=\sum_{m=1}^{M_n} \gamma_m( \mathbf{P}_m- \mathbf{P}_{m-1})$, where $\gamma_m=\sum_{j=m}^{M_n} w_j$ and $ \mathbf{P}_0=\bm{0}$. Since $ \mathbf{P}_m  \mathbf{P}_l= \mathbf{P}_{\min(m,l)}$ for the nested candidate models, it is easy to verify that $\{\mathbf{P}_m- \mathbf{P}_{m-1}\}_{m=1}^{M_n}$ are mutually orthonormal projection matrices, i.e., 
\begin{equation*}
	(\mathbf{P}_{m_1}- \mathbf{P}_{m_1-1})(\mathbf{P}_{m_2}- \mathbf{P}_{m_2-1})=
	\begin{cases}
		\mathbf{P}_{m_1}- \mathbf{P}_{m_1-1}, & \mathrm{if}~m_1=m_2, \\
		\mathbf{0}, & \mathrm{if}~m_1\neq m_2.
	\end{cases}
\end{equation*}
Using the above fact, $R_m(\bm{w})$ is further expanded as
\begin{eqnarray}\label{eq:riskRw}
	R_n(\bm{w})=\hspace{-6mm}&&\sum_{m=1}^{M_n} \biggl(\gamma_m^2\Bigl[ \bm{\mu}^{\top}( \mathbf{P}_m- \mathbf{P}_{m-1})\bm{\mu}+\mathrm{tr}\{( \mathbf{P}_m- \mathbf{P}_{m-1})\bm{\Omega}\}\Bigr] \notag \\
	&& -2\gamma_m \bm{\mu}^{\top}( \mathbf{P}_m- \mathbf{P}_{m-1})\bm{\mu}+\bm{\mu}^{\top}( \mathbf{P}_m- \mathbf{P}_{m-1})\bm{\mu}\biggr) +\bm{\mu}^{\top}( \mathbf{I}_n- \mathbf{P}_{M_n})\bm{\mu}.
\end{eqnarray}
It is straightforward to show that the infeasible optimal weight $\bm{w}_n^*=(w_{n,1}^*,\ldots,w_{n,M_n}^*)^{\top}$ can be obtained by setting $w_{n,m}^*=\gamma_{n,m}^*-\gamma_{n,m+1}^*$
for $m=1,\ldots,M_n-1$ and $w_{n,M_n}^*=\gamma_{n,M_n}^*$, where $\gamma_{n,1}^*=1$ and
\begin{equation}\label{eq:gamma}
	\gamma_{n,m}^*=\frac{\bm{\mu}^{\top}( \mathbf{P}_m- \mathbf{P}_{m-1})\bm{\mu}}{\bm{\mu}^{\top}( \mathbf{P}_m- \mathbf{P}_{m-1})\bm{\mu}+\mathrm{tr}\{( \mathbf{P}_m- \mathbf{P}_{m-1})\bm{\Omega}\}}=\frac{\theta_{n,m}}{\theta_{n,m}+1/n},\quad m=2,\ldots,M_n.
\end{equation}
Hence, the risk of the optimal MA estimator is
\begin{equation}\label{eq:riskma}
	R_n(\bm{w}_n^*)=\mathrm{tr}( \mathbf{P}_1\bm{\Omega})+\sum_{m=2}^{M_n} \frac{\mathrm{tr}\{( \mathbf{P}_m- \mathbf{P}_{m-1})\bm{\Omega}\} \bm{\mu}^{\top}( \mathbf{P}_m- \mathbf{P}_{m-1})\bm{\mu}}{\bm{\mu}^{\top}( \mathbf{P}_m- \mathbf{P}_{m-1})\bm{\mu}+\mathrm{tr}\{( \mathbf{P}_m- \mathbf{P}_{m-1})\bm{\Omega}\}}+\bm{\mu}^{\top}( \mathbf{I}_n- \mathbf{P}_{M_n})\bm{\mu}.
\end{equation}
Combining \eqref{eq:riskms} and \eqref{eq:riskma}, the potential advantage of MA over MS is
\begin{eqnarray}\label{eq:delta}
	\Delta_n\hspace{-6mm}&&=R_n(m_n^*)- R_n(\bm{w}_n^*) \nonumber\\
	&&= \sum_{m=2}^{m_n^*} \biggl[\mathrm{tr}\{( \mathbf{P}_{m}- \mathbf{P}_{m-1})\bm{\Omega}\}- \frac{\mathrm{tr}\{( \mathbf{P}_m- \mathbf{P}_{m-1})\bm{\Omega}\} \bm{\mu}^{\top}( \mathbf{P}_m- \mathbf{P}_{m-1})\bm{\mu}}{\bm{\mu}^{\top}( \mathbf{P}_m- \mathbf{P}_{m-1})\bm{\mu}+\mathrm{tr}\{( \mathbf{P}_m- \mathbf{P}_{m-1})\bm{\Omega}\}}\biggr] \notag \\
	&&\quad +\sum_{m=m_n^*+1}^{M_n} \frac{ \{\bm{\mu}^{\top}( \mathbf{P}_m- \mathbf{P}_{m-1})\bm{\mu}\}^2}{\bm{\mu}^{\top}( \mathbf{P}_m- \mathbf{P}_{m-1})\bm{\mu}+\mathrm{tr}\{( \mathbf{P}_m- \mathbf{P}_{m-1})\bm{\Omega}\}},
\end{eqnarray}
which implies that, as is expected, the optimal risk of MA is not larger than that of MS, i.e., $R_n(\bm{w}_n^*)\leq R_n(m_n^*)$. We consider two scenarios that $M_n<m_n^{**}$ and $M_n\geq m_n^{**}$. 

When $M_n< m_n^{**}$, $m_n^*=M_n$ and it follows from Assumption 3 and $\theta_{n,m_n^*}>1/n$ that $\{\gamma_{n,m}^*\}_{m=1}^{M_n}$ is non-increasing and $\gamma_{n,m_n^*}^*>1/2$. Then, for a sufficiently large $n$,
\begin{eqnarray*}
	R_n(\bm{w}_n^*)\hspace{-6mm}&&\geq \sum_{m=1}^{m_n^*} \frac{\mathrm{tr}\{( \mathbf{P}_m- \mathbf{P}_{m-1})\bm{\Omega}\} \bm{\mu}^{\top}( \mathbf{P}_m- \mathbf{P}_{m-1})\bm{\mu}}{\bm{\mu}^{\top}( \mathbf{P}_m- \mathbf{P}_{m-1})\bm{\mu}+\mathrm{tr}\{( \mathbf{P}_m- \mathbf{P}_{m-1})\bm{\Omega}\}}+\bm{\mu}^{\top}( \mathbf{I}_n- \mathbf{P}_{m_n^*})\bm{\mu} \\
	&&=\sum_{m=1}^{m_n^*} \gamma_{n,m}^* \mathrm{tr}\{( \mathbf{P}_m- \mathbf{P}_{m-1})\bm{\Omega}\}+\bm{\mu}^{\top}( \mathbf{I}_n- \mathbf{P}_{m_n^*})\bm{\mu} \\
	&&\geq \gamma_{n,m_n^*}^*\sum_{m=1}^{m_n^*} \mathrm{tr}\{( \mathbf{P}_m- \mathbf{P}_{m-1})\bm{\Omega}\}+\bm{\mu}^{\top}( \mathbf{I}_n- \mathbf{P}_{m_n^*})\bm{\mu} \\
	&&\geq \frac{1}{2}\mathrm{tr}( \mathbf{P}_{m_n^*}\bm{\Omega})+\bm{\mu}^{\top}( \mathbf{I}_n- \mathbf{P}_{m_n^*})\bm{\mu}\geq \frac{1}{2}R_n(m_n^*).
\end{eqnarray*}

When $M_n\geq m_n^{**}$, it follows from Assumption 3 and \eqref{eq:optm} that $\gamma_{n,m_n^*}^*>1/2\geq \gamma_{n,m_n^*+1}^*$. Then, for a sufficiently large $n$,
\begin{eqnarray*}
		R_n(\bm{w}_n^*)\hspace{-6mm}&&\geq \sum_{m=1}^{M_n} \frac{\mathrm{tr}\{( \mathbf{P}_m- \mathbf{P}_{m-1})\bm{\Omega}\} \bm{\mu}^{\top}( \mathbf{P}_m- \mathbf{P}_{m-1})\bm{\mu}}{\bm{\mu}^{\top}( \mathbf{P}_m- \mathbf{P}_{m-1})\bm{\mu}+\mathrm{tr}\{( \mathbf{P}_m- \mathbf{P}_{m-1})\bm{\Omega}\}}+\bm{\mu}^{\top}( \mathbf{I}_n- \mathbf{P}_{M_n})\bm{\mu} \\
		&&= \sum_{m=1}^{m_n^*} \gamma_{n,m}^* \mathrm{tr}\{( \mathbf{P}_m- \mathbf{P}_{m-1})\bm{\Omega}\}+\sum_{m=m_n^*+1}^{M_n} (1-\gamma_{n,m}^*)\bm{\mu}^{\top}( \mathbf{P}_m- \mathbf{P}_{m-1})\bm{\mu}+\bm{\mu}^{\top}( \mathbf{I}_n- \mathbf{P}_{M_n})\bm{\mu} \\
		&&\geq \gamma_{n,m_n^*}^* \sum_{m=1}^{m_n^*} \mathrm{tr}\{( \mathbf{P}_m- \mathbf{P}_{m-1})\bm{\Omega}\}+(1-\gamma_{n,m_n^*+1}^*)\sum_{m=m_n^*+1}^{M_n} \bm{\mu}^{\top}( \mathbf{P}_m- \mathbf{P}_{m-1})\bm{\mu}+\bm{\mu}^{\top}( \mathbf{I}_n- \mathbf{P}_{M_n})\bm{\mu} \\
		&&\geq \frac{1}{2}\mathrm{tr}( \mathbf{P}_{m_n^*}\bm{\Omega})+\frac{1}{2}\bm{\mu}^{\top}( \mathbf{P}_{M_n}- \mathbf{P}_{m_n^*})\bm{\mu}+\bm{\mu}^{\top}( \mathbf{I}_n- \mathbf{P}_{M_n})\bm{\mu} \\
		&&\geq \frac{1}{2}R_n(m_n^*).
\end{eqnarray*}
Therefore, we have $R_n(m_n^*)\geq R_n(\bm{w}_n^*)\geq R_n(m_n^*)/2$ for any sufficiently large $n$, which yields that $R_n(m_n^*)$ and $R_n(\bm{w}_n^*)$ have the same order. This completes the proof of Theorem \ref{thm:mw}.

\subsection{Proof of Theorem \ref{thm:mn}}
We first prove (i). Let $C>0$ be a given sufficiently large constant. From Assumption 5, there exists a constant $K_C^*=\max\{K_m, \lfloor 2/\bar{\theta}_{\lfloor C\rfloor+1}\rfloor+1\}>0$ such that $\theta_{n,\lfloor C\rfloor+1}-1/n\geq \bar{\theta}_{\lfloor C\rfloor+1}/2>0$ for any $n\geq K_C^*$. Since $m_n^{**}$ satisfies $1/n\geq \theta_{n,m_n^{**}+1}$ from \eqref{eq:optm2}, we have
\begin{equation*}
	\theta_{n,\lfloor C\rfloor+1}-\theta_{n,m_n^{**}+1}\geq \theta_{n,\lfloor C\rfloor+1}-\frac{1}{n}>0,
\end{equation*}
which, along with Assumption 3, leads to $m_n^{**}+1\geq \lfloor C\rfloor+2$. This implies that for any constant $C>0$, there exists a constant $K_C^*>0$ such that $m_n^{**}\geq \lfloor C\rfloor+1>C$ for any $n\geq K_C^*$, i.e., $\lim_{n\to \infty} m_n^{**}=\infty$. This completes the proof of Theorem \ref{thm:mn}(i).

Next, we prove (ii). When $M_n\geq m_n^{**}$, $m_n^*=m_n^{**}$ and thus
\begin{equation*}
	R_n(m_n^*)\geq \mathrm{tr}( \mathbf{P}_{m_n^*}\bm{\Omega})=\mathrm{tr}( \mathbf{P}_{m_n^{**}}\bm{\Omega})\geq c_1\nu_{m_n^{**}}\geq c_1 m_n^{**}\to \infty.
\end{equation*}
When $M_n<m_n^{**}$, $m_n^*=M_n$ and thus by \eqref{eq:optm2} and Assumptions 2--3,
\begin{eqnarray}\label{eq:M12}
	R_n(m_n^*)\hspace{-6mm}&&=R_n(M_n)=\bm{\mu}^{\top}(\mathbf{I}_n- \mathbf{P}_{M_n})\bm{\mu}+\mathrm{tr}( \mathbf{P}_{M_n}\bm{\Omega}) \notag \\
	&& \geq \bm{\mu}^{\top}(\mathbf{P}_{m_n^{**}}- \mathbf{P}_{M_n})\bm{\mu}+\mathrm{tr}( \mathbf{P}_{M_n}\bm{\Omega}) \notag \\
	&& = \sum_{m=M_n+1}^{m_n^{**}} \mathrm{tr}\{( \mathbf{P}_m- \mathbf{P}_{m-1})\bm{\Omega}\}n\theta_{n,m}+\mathrm{tr}( \mathbf{P}_{M_n}\bm{\Omega}) \notag \\
	&&\geq  n\theta_{n,m_n^{**}}\sum_{m=M_n+1}^{m_n^{**}} \mathrm{tr}\{( \mathbf{P}_m- \mathbf{P}_{m-1})\bm{\Omega}\}+\mathrm{tr}( \mathbf{P}_{M_n}\bm{\Omega}) \notag \\
	&&\geq \mathrm{tr}\{(\mathbf{P}_{m_n^{**}}- \mathbf{P}_{M_n})\bm{\Omega}\}+\mathrm{tr}( \mathbf{P}_{M_n}\bm{\Omega}) \notag \\
	&& =\mathrm{tr}(\mathbf{P}_{m_n^{**}}\bm{\Omega})\geq c_1\nu_{m_n^{**}}\geq c_1 m_n^{**}\to \infty.
\end{eqnarray}
Therefore, $R_n(m_n^*)\to \infty$ as $n\to \infty$ for any $M_n$. Combining it with Theorem \ref{thm:mw}, we have $R_n(\bm{w}_n^*)\geq \frac{1}{2} R_n(m_n^*)\to \infty$ as $n\to \infty$. This completes the proof of Theorem \ref{thm:mn}(ii).

\subsection{Proof of Theorem \ref{thm:delta2}}
Under Condition M1, $\lim_{n\to \infty} M_n/m_n^{**}=0$, which implies that $M_n<m_n^{**}$ when $n$ is large enough and thus $m_n^*=M_n$. By \eqref{eq:delta}, for a sufficiently large $n$,
\begin{eqnarray}\label{eq:M11}
	\Delta_n \hspace{-6mm}&&=\sum_{m=2}^{M_n} \frac{[\mathrm{tr}\{( \mathbf{P}_m- \mathbf{P}_{m-1})\bm{\Omega}\}]^2}{\bm{\mu}^{\top}( \mathbf{P}_m- \mathbf{P}_{m-1})\bm{\mu}+\mathrm{tr}\{( \mathbf{P}_m- \mathbf{P}_{m-1})\bm{\Omega}\}} \notag \\
	&&\leq \sum_{m=2}^{M_n} \mathrm{tr}\{( \mathbf{P}_m- \mathbf{P}_{m-1})\bm{\Omega}\} \notag \\
	&&=\mathrm{tr}\{( \mathbf{P}_{M_n}- \mathbf{P}_{1})\bm{\Omega}\} \notag \\
	&&\leq c_2 (\nu_{M_n}-\nu_1) \leq c_2 V (M_n-1),
\end{eqnarray}
where the last two inequalities are due to Assumptions 2 and 4, respectively. Combining \eqref{eq:M12}, \eqref{eq:M11}, $\lim_{n\to \infty} M_n/m_n^{**}=0$, and Theorem \ref{thm:mn}, we have
\begin{equation*}
	\limsup_{n\to \infty} \frac{\Delta_n}{R_n(m_n^*)}\leq \frac{c_2V}{c_1}\lim_{n\to \infty} \frac{M_n-1}{m_n^{**}}=0,
\end{equation*}
which yields that $\Delta_n=o\{R_n(m_n^*)\}$. This completes the proof of Theorem \ref{thm:delta2}.

\subsection{Proof of Theorem \ref{thm:delta}}\label{sec:thm2}
When Condition M2 holds, we consider two scenarios: $M_n\geq m_n^{**}$ and $\underline{c}\leq M_n/m_n^{**}<1$ for any sufficiently large $n$, to prove this theorem.

\underline{(i) $M_n\geq m_n^{**}$ for any sufficiently large $n$}. In this case, $m_n^*=m_n^{**}$ satisfies \eqref{eq:optm}. When Condition A1 holds, we first examine the order of the optimal risk of MS. Let $s_n^*=\max\{s: \lfloor k^s(m_n^*+1)\rfloor\leq d_n, s=0,1,\ldots\}$. The first term in \eqref{eq:riskms} is upper bounded by
\begin{eqnarray*}
	&&\bm{\mu}^{\top}(\mathbf{I}_n- \mathbf{P}_{m_n^*})\bm{\mu} \\
	&&\quad =\sum_{m=m_n^*+1}^{q_n} \bm{\mu}^{\top}( \mathbf{P}_{m}- \mathbf{P}_{m-1})\bm{\mu}=\sum_{m=m_n^*+1}^{d_n} \bm{\mu}^{\top}( \mathbf{P}_{m}- \mathbf{P}_{m-1})\bm{\mu} \\
	&&\quad =\sum_{s=0}^{s_n^*-1} \sum_{m=\lfloor k^s(m_n^*+1)\rfloor}^{\lfloor k^{s+1}(m_n^*+1)\rfloor-1} \bm{\mu}^{\top}( \mathbf{P}_{m}- \mathbf{P}_{m-1})\bm{\mu} +\sum_{m=\lfloor k^{s_n^*}(m_n^*+1)\rfloor}^{d_n} \bm{\mu}^{\top}( \mathbf{P}_{m}- \mathbf{P}_{m-1})\bm{\mu} \\
	&&\quad =\sum_{s=0}^{s_n^*-1} \sum_{m=\lfloor k^s(m_n^*+1)\rfloor}^{\lfloor k^{s+1}(m_n^*+1)\rfloor-1} n\mathrm{tr}\{( \mathbf{P}_m- \mathbf{P}_{m-1})\bm{\Omega}\} \theta_{n,m} +\sum_{m=\lfloor k^{s_n^*}(m_n^*+1)\rfloor}^{d_n} n\mathrm{tr}\{( \mathbf{P}_m- \mathbf{P}_{m-1})\bm{\Omega}\} \theta_{n,m} \\
	&&\quad \leq \sum_{s=0}^{s_n^*-1} \theta_{n,\lfloor k^s(m_n^*+1)\rfloor} \sum_{m=\lfloor k^s(m_n^*+1)\rfloor}^{\lfloor k^{s+1}(m_n^*+1)\rfloor-1} n\mathrm{tr}\{( \mathbf{P}_m- \mathbf{P}_{m-1})\bm{\Omega}\} \\ &&\qquad +\theta_{n,\lfloor k^{s_n^*}(m_n^*+1)\rfloor} \sum_{m=\lfloor k^{s_n^*}(m_n^*+1)\rfloor}^{d_n} n\mathrm{tr}\{( \mathbf{P}_m- \mathbf{P}_{m-1})\bm{\Omega}\} \\
	&&\quad \leq n\theta_{n,m_n^*+1} \sum_{s=0}^{s_n^*-1} \eta^s \mathrm{tr}\{( \mathbf{P}_{\lfloor k^{s+1}(m_n^*+1)\rfloor-1}- \mathbf{P}_{\lfloor k^s(m_n^*+1)\rfloor-1})\bm{\Omega}\} \\
	&&\qquad +n\theta_{n,m_n^*+1} \eta^{s_n^*} \mathrm{tr}\{( \mathbf{P}_{d_n}- \mathbf{P}_{\lfloor k^{s_n^*}(m_n^*+1)\rfloor-1})\bm{\Omega}\} \\
	&&\quad \leq c_2\sum_{s=0}^{s_n^*-1} \eta^s (\nu_{\lfloor k^{s+1}(m_n^*+1)\rfloor}-\nu_{\lfloor k^s(m_n^*+1)\rfloor})+c_2 \eta^{s_n^*}(\nu_{d_n}-\nu_{\lfloor k^{s_n^*}(m_n^*+1)\rfloor-1}) \\
	&&\quad \leq c_2 V \sum_{s=0}^{s_n^*} \eta^s (\lfloor k^{s+1}(m_n^*+1)\rfloor-\lfloor k^s(m_n^*+1)\rfloor) \\
	&&\quad \sim c_2 V (k-1)(m_n^*+1)\sum_{s=0}^{s_n^*} (k\eta)^s \asymp m_n^*\asymp \mathrm{tr}( \mathbf{P}_{m_n^*}\bm{\Omega}),
\end{eqnarray*}
where the first equality follows from the fact that $\bm{\mu}^{\top} ( \mathbf{I}_n- \mathbf{P}_{q_n})\bm{\mu}=0$, the first inequality follows from Assumption 3, the second inequality follows from $\theta_{n,\lfloor k^s(m_n^*+1)\rfloor}/\theta_{n,m_n^*+1}\leq \eta^s$ for a sufficiently large $n$, which can be obtained by Condition A1 and Theorem \ref{thm:mn}, and the last two inequalities follow from \eqref{eq:optm} and Assumption 4 respectively. Thus, the order of the optimal risk of MS satisfies $R_n(m_n^*)\asymp \mathrm{tr}( \mathbf{P}_{m_n^*}\bm{\Omega})$.

Next, we prove that the potential advantage $\Delta_n$ of MA over MS has the same order as $R_n(m_n^*)$ under Condition A1. Define $t_n=\min \{t\in \mathbb{N}: \lfloor kt\rfloor\geq m_n^*+1\}$. Then it follows from Theorem \ref{thm:mn} and \citet{peng2021} that $\lim_{n\to \infty} t_n=\infty$, $\lfloor kt_n\rfloor \sim m_n^*$, and $t_n\sim m_n^*/k$. The first term in \eqref{eq:delta} can be lower bounded by
\begin{eqnarray}\label{eq:condlb}
	&&\sum_{m=2}^{m_n^*} \biggl[\mathrm{tr}\{( \mathbf{P}_{m}- \mathbf{P}_{m-1})\bm{\Omega}\}- \frac{\mathrm{tr}\{( \mathbf{P}_m- \mathbf{P}_{m-1})\bm{\Omega}\} \bm{\mu}^{\top}( \mathbf{P}_m- \mathbf{P}_{m-1})\bm{\mu}}{\bm{\mu}^{\top}( \mathbf{P}_m- \mathbf{P}_{m-1})\bm{\mu}+\mathrm{tr}\{( \mathbf{P}_m- \mathbf{P}_{m-1})\bm{\Omega}\}}\biggr] \notag\\
	&&\quad \geq \mathrm{tr}\{( \mathbf{P}_{m_n^*}- \mathbf{P}_{1})\bm{\Omega}\}-\sum_{m=2}^{\lfloor kt_n\rfloor} \frac{\mathrm{tr}\{( \mathbf{P}_m- \mathbf{P}_{m-1})\bm{\Omega}\} \bm{\mu}^{\top}( \mathbf{P}_m- \mathbf{P}_{m-1})\bm{\mu}}{\bm{\mu}^{\top}( \mathbf{P}_m- \mathbf{P}_{m-1})\bm{\mu}+\mathrm{tr}\{( \mathbf{P}_m- \mathbf{P}_{m-1})\bm{\Omega}\}} \notag\\
	&&\quad \geq \mathrm{tr}\{( \mathbf{P}_{m_n^*}- \mathbf{P}_{1})\bm{\Omega}\}-\sum_{m=2}^{t_n} \mathrm{tr}\{( \mathbf{P}_{m}- \mathbf{P}_{m-1})\bm{\Omega}\}-\sum_{m=t_n+1}^{\lfloor kt_n\rfloor} \frac{\mathrm{tr}\{( \mathbf{P}_m- \mathbf{P}_{m-1})\bm{\Omega}\}}{1+1/(n\theta_{n,m})} \notag\\
	&&\quad \geq \mathrm{tr}\{( \mathbf{P}_{m_n^*}- \mathbf{P}_{t_n})\bm{\Omega}\}-\frac{1}{1+1/(n\theta_{n,t_n})} \mathrm{tr}\{( \mathbf{P}_{\lfloor kt_n\rfloor}- \mathbf{P}_{t_n})\bm{\Omega}\}, \notag \\
	&& \quad \geq \mathrm{tr}\{( \mathbf{P}_{m_n^*}- \mathbf{P}_{t_n})\bm{\Omega}\}-\frac{1}{1+\delta} \mathrm{tr}\{( \mathbf{P}_{\lfloor kt_n\rfloor}- \mathbf{P}_{t_n})\bm{\Omega}\} \notag \\
	&&\quad = \frac{1}{1+\delta}\mathrm{tr}\big[ \{(1+\delta) \mathbf{P}_{m_n^*}-  \mathbf{P}_{\lfloor kt_n\rfloor}-\delta  \mathbf{P}_{t_n}\}\bm{\Omega}\bigr]
\end{eqnarray}
where the third inequality follows from Assumption 3 and the last inequality follows from the following fact
\begin{equation*}
	\frac{1}{1+1/(n\theta_{n,t_n})}\leq \frac{1}{1+\delta/(n\theta_{n, \lfloor kt_n\rfloor})}\leq \frac{1}{1+\delta/(n\theta_{n, m_n^*+1})}\leq \frac{1}{1+\delta},
\end{equation*}
which can be derived by \eqref{eq:optm} and Condition A1. Since $\nu_{m_n^*}\sim \nu_{\lfloor kt_n\rfloor}$, it is easy to show that $(1+\delta) \mathbf{P}_{m_n^*}-  \mathbf{P}_{\lfloor kt_n\rfloor}-\delta  \mathbf{P}_{t_n}$ is positive semi-definite for sufficiently large $n$. By Assumption 2 and the fact that $\mathrm{tr}(\mathbf{A}\mathbf{B})\geq \lambda_{\min}(\mathbf{A})\mathrm{tr}(\mathbf{B})$ for symmetric matrix $\mathbf{A}$ and positive semi-definite matrix $\mathbf{B}$ \citep[Proposition 8.4.13]{matrixbook2005}, we have
\begin{eqnarray}\label{eq:cond1}
	&&\frac{1}{1+\delta}\mathrm{tr}\big[ \{(1+\delta) \mathbf{P}_{m_n^*}-  \mathbf{P}_{\lfloor kt_n\rfloor}-\delta  \mathbf{P}_{t_n}\}\bm{\Omega}\bigr] \notag \\
	&&\quad \geq \frac{c_1}{1+\delta} \{(1+\delta)\nu_{m_n^*}-\nu_{\lfloor kt_n\rfloor}-\delta \nu_{t_n}\} \notag \\
	&&\quad \geq \frac{c_1}{1+\delta} (\nu_{m_n^*}-\nu_{\lfloor kt_n\rfloor})+\frac{c_1\delta}{1+\delta}(m_n^*-t_n) \notag \\
	&&\quad \sim \frac{(k-1)c_1\delta}{k(1+\delta)}m_n^*\asymp \mathrm{tr}( \mathbf{P}_{m_n^*}\bm{\Omega}),
\end{eqnarray}
where the last line is due to $\nu_{m_n^*}\sim \nu_{\lfloor kt_n\rfloor}$ and $t_n\sim m_n^*/k$. From \eqref{eq:delta}, we see
\begin{equation*}
	R_n(m_n^*)\geq \Delta_n\geq \sum_{m=2}^{m_n^*}\biggl[\mathrm{tr}\{( \mathbf{P}_{m}- \mathbf{P}_{m-1})\bm{\Omega}\}- \frac{\mathrm{tr}\{( \mathbf{P}_m- \mathbf{P}_{m-1})\bm{\Omega}\} \bm{\mu}^{\top}( \mathbf{P}_m- \mathbf{P}_{m-1})\bm{\mu}}{\bm{\mu}^{\top}( \mathbf{P}_m- \mathbf{P}_{m-1})\bm{\mu}+\mathrm{tr}\{( \mathbf{P}_m- \mathbf{P}_{m-1})\bm{\Omega}\}}\biggr],
\end{equation*}
which, along with \eqref{eq:condlb} and \eqref{eq:cond1}, implies $\Delta_n\asymp R_n(m_n^*)$. This completes the proof of the result under Condition A1.

When Condition A2 holds, we next examine $\Delta_n=o\{R_n(m_n^*)\}$. Let $2/m_n^*<k'<1$. The first term in \eqref{eq:delta} is upper bounded by
\begin{eqnarray}
	&&\sum_{m=2}^{m_n^*}\biggl[\mathrm{tr}\{( \mathbf{P}_{m}- \mathbf{P}_{m-1})\bm{\Omega}\}- \frac{\mathrm{tr}\{( \mathbf{P}_m- \mathbf{P}_{m-1})\bm{\Omega}\} \bm{\mu}^{\top}( \mathbf{P}_m- \mathbf{P}_{m-1})\bm{\mu}}{\bm{\mu}^{\top}( \mathbf{P}_m- \mathbf{P}_{m-1})\bm{\mu}+\mathrm{tr}\{( \mathbf{P}_m- \mathbf{P}_{m-1})\bm{\Omega}\}}\biggr] \notag \\
	&&\quad \leq \mathrm{tr}\{( \mathbf{P}_{m_n^*}- \mathbf{P}_{1})\bm{\Omega}\}-\sum_{m=2}^{\lfloor k'm_n^*\rfloor} \frac{\mathrm{tr}\{( \mathbf{P}_m- \mathbf{P}_{m-1})\bm{\Omega}\}}{1+1/(n\theta_{n,m})} \notag \\
	&&\quad \leq \mathrm{tr}\{( \mathbf{P}_{m_n^*}- \mathbf{P}_1)\bm{\Omega}\}-\frac{1}{1+1/(n\theta_{n,\lfloor k'm_n^*\rfloor})} \sum_{m=2}^{\lfloor k'm_n^*\rfloor} \mathrm{tr}\{( \mathbf{P}_m- \mathbf{P}_{m-1})\bm{\Omega}\} \label{eq:cond21} \notag \\
	&&\quad = \mathrm{tr}\{( \mathbf{P}_{m_n^*}- \mathbf{P}_1)\bm{\Omega}\}-\frac{1}{1+1/(n\theta_{n,\lfloor k'm_n^*\rfloor})} \mathrm{tr}\{( \mathbf{P}_{\lfloor k'm_n^*\rfloor}- \mathbf{P}_{1})\bm{\Omega}\} \notag \\
	&&\quad =\mathrm{tr} \biggl\{\biggl( \mathbf{P}_{m_n^*}- \mathbf{P}_1-\frac{ \mathbf{P}_{\lfloor k'm_n^*\rfloor}- \mathbf{P}_{1}}{1+1/(n\theta_{n,\lfloor k'm_n^*\rfloor})}\biggr) \bm{\Omega}\biggr\}.
\end{eqnarray}
Observe that
\begin{equation*}
	 \mathbf{P}_{m_n^*}- \mathbf{P}_1-\frac{ \mathbf{P}_{\lfloor k'm_n^*\rfloor}- \mathbf{P}_{1}}{1+1/(n\theta_{n,\lfloor k'm_n^*\rfloor})}=\frac{ \mathbf{P}_{m_n^*}- \mathbf{P}_{\lfloor k'm_n^*\rfloor}+( \mathbf{P}_{m_n^*}- \mathbf{P}_1)/(n\theta_{n,\lfloor k'm_n^*\rfloor})}{1+1/(n\theta_{n,\lfloor k'm_n^*\rfloor})}
\end{equation*}
is a positive semi-definite matrix. By the fact that $\mathrm{tr}(\mathbf{A}\mathbf{B})\leq \lambda_{\max}(\mathbf{A})\mathrm{tr}(\mathbf{B})$ for symmetric matrix $\mathbf{A}$ and positive semi-definite matrix $\mathbf{B}$ \citep[Proposition 8.4.13]{matrixbook2005}, we have
\begin{eqnarray}
	&& \mathrm{tr} \biggl\{\biggl( \mathbf{P}_{m_n^*}- \mathbf{P}_1-\frac{ \mathbf{P}_{\lfloor k'm_n^*\rfloor}- \mathbf{P}_{1}}{1+1/(n\theta_{n,\lfloor k'm_n^*\rfloor})}\biggr) \bm{\Omega}\biggr\} \notag \\
	&&\quad \leq \frac{c_2}{1+1/(n\theta_{n,\lfloor k'm_n^*\rfloor})} \mathrm{tr} \biggl( \mathbf{P}_{m_n^*}- \mathbf{P}_{\lfloor k'm_n^*\rfloor}+\frac{ \mathbf{P}_{m_n^*}- \mathbf{P}_1}{n\theta_{n,\lfloor k'm_n^*\rfloor}}\biggr) \notag \\
	&&\quad = \frac{c_2}{1+1/(n\theta_{n,\lfloor k'm_n^*\rfloor})} \biggl(\nu_{m_n^*}-\nu_{\lfloor k'm_n^*\rfloor}+\frac{\nu_{m_n^*}-\nu_1}{n\theta_{n,\lfloor k'm_n^*\rfloor}}\biggr) \label{eq:cond22} \notag \\
	&&\quad \leq \frac{c_2 V}{1+1/(n\theta_{n,\lfloor k'm_n^*\rfloor})} \biggl(m_n^*-\lfloor k'm_n^*\rfloor+\frac{m_n^*-1}{n\theta_{n,\lfloor k'm_n^*\rfloor}}\biggr) \notag \\
	&&\quad \leq c_2 V \biggl\{m_n^*-\lfloor k'm_n^*\rfloor+\frac{\theta_{n,m_n^*}}{\theta_{n,\lfloor k'm_n^*\rfloor}}(m_n^*-1)\biggr\},
\end{eqnarray}
where the second inequality follows from Assumption 4. Since $\lim_{n\to \infty} \theta_{n,m_n^*}/\theta_{n,\lfloor k'm_n^*\rfloor}=0$ for any $k'<1$ under Condition A2 and Theorem \ref{thm:mn}, we have 
\begin{equation*}
	\biggl\{m_n^*-\lfloor k'm_n^*\rfloor+\frac{\theta_{n,m_n^*}}{\theta_{n,\lfloor k'm_n^*\rfloor}}(m_n^*-1)\biggr\}\Big/m_n^*=1-\frac{\lfloor k'm_n^*\rfloor}{m_n^*}+\frac{\theta_{n,m_n^*}}{\theta_{n,\lfloor k'm_n^*\rfloor}}\left(1-\frac{1}{m_n^*}\right)\to 1-k',
\end{equation*}
which, along with \eqref{eq:cond21} and \eqref{eq:cond22}, yields that
\begin{equation*}
	\sum_{m=2}^{m_n^*}\biggl[\mathrm{tr}\{( \mathbf{P}_{m}- \mathbf{P}_{m-1})\bm{\Omega}\}- \frac{\mathrm{tr}\{( \mathbf{P}_m- \mathbf{P}_{m-1})\bm{\Omega}\} \bm{\mu}^{\top}( \mathbf{P}_m- \mathbf{P}_{m-1})\bm{\mu}}{\bm{\mu}^{\top}( \mathbf{P}_m- \mathbf{P}_{m-1})\bm{\mu}+\mathrm{tr}\{( \mathbf{P}_m- \mathbf{P}_{m-1})\bm{\Omega}\}}\biggr]=O\{(1-k')m_n^*\}.
\end{equation*}
Due to the arbitrariness of $k'$ and the fact $\mathrm{tr}( \mathbf{P}_{m_n^*}\bm{\Omega})\asymp m_n^*$, letting $k'\to 1$, we can obtain the first term of \eqref{eq:delta}:
\begin{equation}\label{eq:fterm}
	\sum_{m=2}^{m_n^*}\biggl[\mathrm{tr}\{( \mathbf{P}_{m}- \mathbf{P}_{m-1})\bm{\Omega}\}- \frac{\mathrm{tr}\{( \mathbf{P}_m- \mathbf{P}_{m-1})\bm{\Omega}\} \bm{\mu}^{\top}( \mathbf{P}_m- \mathbf{P}_{m-1})\bm{\mu}}{\bm{\mu}^{\top}( \mathbf{P}_m- \mathbf{P}_{m-1})\bm{\mu}+\mathrm{tr}\{( \mathbf{P}_m- \mathbf{P}_{m-1})\bm{\Omega}\}}\biggr]=o\{\mathrm{tr}( \mathbf{P}_{m_n^*}\bm{\Omega})\}.
\end{equation}

Next, we consider the order of the second term of \eqref{eq:delta}. Choose $k>1$. We have
\begin{equation}\label{eq:tail}
	\begin{split}
	& \sum_{m=m_n^*+1}^{M_n} \frac{ \{\bm{\mu}^{\top}( \mathbf{P}_m- \mathbf{P}_{m-1})\bm{\mu}\}^2}{\bm{\mu}^{\top}( \mathbf{P}_m- \mathbf{P}_{m-1})\bm{\mu}+\mathrm{tr}\{( \mathbf{P}_m- \mathbf{P}_{m-1})\bm{\Omega}\}} \\
	&\quad = \sum_{m=m_n^*+1}^{\lfloor k(m_n^*+1)\rfloor} \frac{n\theta_{n,m}}{1+1/(n\theta_{n,m})} \mathrm{tr}\{( \mathbf{P}_m- \mathbf{P}_{m-1})\bm{\Omega}\}+\sum_{m=\lfloor k(m_n^*+1)\rfloor+1}^{\min\{M_n,d_n\}} \frac{\bm{\mu}^{\top}( \mathbf{P}_m- \mathbf{P}_{m-1})\bm{\mu}}{1+1/(n\theta_{n,m})}.
	\end{split}
\end{equation}
The first term of \eqref{eq:tail} is upper bounded by
\begin{eqnarray*}
	&& \sum_{m=m_n^*+1}^{\lfloor k(m_n^*+1)\rfloor} \frac{n\theta_{n,m}}{1+1/(n\theta_{n,m})} \mathrm{tr}\{( \mathbf{P}_m- \mathbf{P}_{m-1})\bm{\Omega}\} \\
	&&\quad \leq \frac{n\theta_{n,m_n*+1}}{1+1/(n\theta_{n,m_n^*+1})}  \sum_{m=m_n^*+1}^{\lfloor k(m_n^*+1)\rfloor} \mathrm{tr}\{( \mathbf{P}_m- \mathbf{P}_{m-1})\bm{\Omega}\} \\
	&&\quad \leq \frac{1}{2} \mathrm{tr}\{( \mathbf{P}_{\lfloor k(m_n^*+1)\rfloor}- \mathbf{P}_{m_n^*})\bm{\Omega}\} \\
	&&\quad \leq \frac{c_2}{2} (\nu_{\lfloor k(m_n^*+1)\rfloor}-\nu_{m_n^*}) \\
	&&\quad \leq \frac{c_2}{2} V (\lfloor k(m_n^*+1)\rfloor-m_n^*),
\end{eqnarray*}
where the first two inequalities follow from Assumption 3 and \eqref{eq:optm}, respectively, and the last inequality follows from Assumption 4. Using Theorem \ref{thm:mn}, as $n\to \infty$, 
\begin{equation*}
	\frac{\lfloor k(m_n^*+1)\rfloor-m_n^*}{m_n^*}=\frac{\lfloor k(m_n^*+1)\rfloor}{m_n^*}-1\to k-1.
\end{equation*}
Therefore, 
\begin{equation}\label{eq:cond23}
	\sum_{m=m_n^*+1}^{\lfloor k(m_n^*+1)\rfloor} \frac{n\theta_{n,m}}{1+1/(n\theta_{n,m})} \mathrm{tr}\{( \mathbf{P}_m- \mathbf{P}_{m-1})\bm{\Omega}\}=O\{(k-1)m_n^*\}=O\{(k-1)\mathrm{tr}( \mathbf{P}_{m_n^*}\bm{\Omega})\}.
\end{equation}
The second term of \eqref{eq:tail} can be upper bounded by
\begin{eqnarray}
	&& \sum_{m=\lfloor k(m_n^*+1)\rfloor+1}^{\min\{M_n,d_n\}} \frac{\bm{\mu}^{\top}( \mathbf{P}_m- \mathbf{P}_{m-1})\bm{\mu}}{1+1/(n\theta_{n,m})} \notag \\
	&&\quad \leq \frac{1}{1+1/(n\theta_{n,\lfloor k(m_n^*+1)\rfloor})} \sum_{m=\lfloor k(m_n^*+1)\rfloor+1}^{\min\{M_n,d_n\}} \bm{\mu}^{\top}( \mathbf{P}_m- \mathbf{P}_{m-1})\bm{\mu} \notag \\
	&&\quad \leq \frac{1}{1+\theta_{n, m_n^*+1}/\theta_{n,\lfloor k(m_n^*+1)\rfloor}} \bm{\mu}^{\top}( \mathbf{P}_{\min\{M_n,d_n\}}- \mathbf{P}_{\lfloor k(m_n^*+1)\rfloor})\bm{\mu} \notag \\
	&&\quad \leq \frac{1}{1+\theta_{n, m_n^*+1}/\theta_{n,\lfloor k(m_n^*+1)\rfloor}} \bm{\mu}^{\top}( \mathbf{I}_n- \mathbf{P}_{m_n^*})\bm{\mu} \notag \\
	&&\quad =o\{\bm{\mu}^{\top}( \mathbf{I}_n- \mathbf{P}_{m_n^*})\bm{\mu}\},\label{eq:cond24}
\end{eqnarray}
where the first two inequalities follow from Assumption 3 and \eqref{eq:optm}, respectively, the last inequality follows from the following fact
\begin{eqnarray*}
	\bm{\mu}^{\top}( \mathbf{I}_n- \mathbf{P}_{m_n^*})\bm{\mu}=\hspace{-6mm}&&\bm{\mu}^{\top}( \mathbf{I}_n- \mathbf{P}_{\min\{M_n,d_n\}})\bm{\mu}+\bm{\mu}^{\top}( \mathbf{P}_{\min\{M_n,d_n\}}- \mathbf{P}_{\lfloor k(m_n^*+1)\rfloor})\bm{\mu} \\
	 &&+\bm{\mu}^{\top}( \mathbf{P}_{\lfloor k(m_n^*+1)\rfloor}- \mathbf{P}_{m_n^*})\bm{\mu} \\
	 \geq\hspace{-6mm}&& \bm{\mu}^{\top}( \mathbf{P}_{\min\{M_n,d_n\}}- \mathbf{P}_{\lfloor k(m_n^*+1)\rfloor})\bm{\mu},
\end{eqnarray*}
and the last equality follows from the fact that $\lim_{n\to \infty} \theta_{n,\lfloor k(m_n^*+1)\rfloor}/\theta_{n, m_n^*+1}=0$ for any $k>1$ under Condition A2. Combining \eqref{eq:tail}, \eqref{eq:cond23}, and \eqref{eq:cond24}, we have
\begin{equation*}
	\sum_{m=m_n^*+1}^{M_n} \frac{ \{\bm{\mu}^{\top}( \mathbf{P}_m- \mathbf{P}_{m-1})\bm{\mu}\}^2}{\bm{\mu}^{\top}( \mathbf{P}_m- \mathbf{P}_{m-1})\bm{\mu}+\mathrm{tr}\{( \mathbf{P}_m- \mathbf{P}_{m-1})\bm{\Omega}\}}=O\{(k-1)\mathrm{tr}( \mathbf{P}_{m_n^*}\bm{\Omega})\}+o\{\bm{\mu}^{\top}( \mathbf{I}_n- \mathbf{P}_{m_n^*})\bm{\mu}\}.
\end{equation*}
Duo to the arbitrariness of $k$, letting $k\to 1$, we have
\begin{equation*}
	\sum_{m=m_n^*+1}^{M_n} \frac{ \{\bm{\mu}^{\top}( \mathbf{P}_m- \mathbf{P}_{m-1})\bm{\mu}\}^2}{\bm{\mu}^{\top}( \mathbf{P}_m- \mathbf{P}_{m-1})\bm{\mu}+\mathrm{tr}\{( \mathbf{P}_m- \mathbf{P}_{m-1})\bm{\Omega}\}}=o\{R_n(m_n^*)\},
\end{equation*}
which, along with \eqref{eq:delta} and \eqref{eq:fterm}, leads to $\Delta_n=o\{R_n(m_n^*)\}$. This completes the proof of the result under Condition A2.

\underline{(ii) $\underline{c}\leq M_n/m_n^{**}<1$ for any sufficiently large $n$}. In this case, $m_n^*=M_n \asymp m_n^{**}$. When Condition A1 holds, there exists a finite positive integer $\tau_1$ such that $k^{-\tau_1}\leq \underline{c}$. Therefore,
\begin{equation}\label{eq:case2}
	\theta_{n,m_n^*+1}=\theta_{n,M_n+1}\leq \theta_{n,\lfloor\underline{c}m_n^{**}\rfloor+1}\leq \theta_{n,\lfloor k^{-\tau_1}(m_n^{**}+1)\rfloor}\leq \delta^{-\tau_1}\theta_{n,m_n^{**}+1}\leq \delta^{-\tau_1}\frac{1}{n},
\end{equation}
where the last inequality is due to \eqref{eq:optm2}. Then, using the same arguments in (i) and \eqref{eq:case2}, it is easy to prove the result under Condition A1. When Condition A2 holds, we can also obtain \eqref{eq:fterm}, which, along with the fact that the second term of \eqref{eq:delta} equals 0, yields the result under Condition A2. This completes the proof of Theorem \ref{thm:delta} under (ii).

\subsection{Proof of Lemma \ref{lem:cond}}

From Assumption 7, we know that for any small $0<\epsilon<1$, there exists a constant $K_{\epsilon}>0$ which does not depend on $m$, such that $0<1-\epsilon\leq \theta_{n,m}/\theta_m^*\leq 1+\epsilon$ holds uniformly in $m=1,\ldots,d_n$ and $n\geq K_{\epsilon}$.

(i) When Condition B1 holds, there exist constants $k>1$ and $0<\delta^* \leq \eta^*<1$ with $k\eta^*<1$ such that for a sufficiently large $n$,
\begin{equation*}
	\frac{1-\epsilon}{1+\epsilon}\delta^* \leq \frac{\theta_{n, \lfloor k l_n\rfloor}}{\theta_{n, l_n}}=\frac{\theta_{n, \lfloor k l_n\rfloor}}{\theta_{\lfloor k l_n\rfloor}^*} \times \frac{\theta_{\lfloor k l_n\rfloor}^*}{\theta_{l_n}^*}\times \frac{\theta_{l_n}^*}{\theta_{n, l_n}}\leq \frac{1+\epsilon}{1-\epsilon}\eta^*.
\end{equation*}
Let $\delta=\frac{1-\epsilon}{1+\epsilon}\delta^*$ and $\eta=\frac{1+\epsilon}{1-\epsilon}\eta^*$. Since $\lim_{\epsilon\to 0} \frac{1+\epsilon}{1-\epsilon}=1$, we can choose a small enough $\epsilon>0$ such that $0<\delta\leq \eta<1$ and $k\eta<1$. Therefore, Condition B1 implies Condition A1.

(ii) When Condition B2 holds, for every constant $k>1$ and every integer sequence $\{l_n\}$ satisfied $\lim_{n\to \infty} l_n=\infty$,
\begin{equation*}
	\lim_{n\to \infty} \frac{\theta_{n, \lfloor k l_n\rfloor}}{\theta_{n, l_n}}=\lim_{n\to \infty} \biggl\{\frac{\theta_{n, \lfloor k l_n\rfloor}}{\theta_{\lfloor k l_n\rfloor}^*} \times \frac{\theta_{\lfloor k l_n\rfloor}^*}{\theta_{l_n}^*}\times \frac{\theta_{l_n}^*}{\theta_{n, l_n}} \biggr\}\leq \frac{1+\epsilon}{1-\epsilon} \lim_{n\to \infty} \frac{\theta_{\lfloor k l_n\rfloor}^*}{\theta_{l_n}^*}=0.
\end{equation*}
Therefore, Condition B2 implies Condition A2.

\subsection{Proof of Corollary \ref{cor:main}}\label{subsec:appcor}

From Theorem \ref{thm:mw}, $\frac{1}{2}\leq R_n(\bm{w}_n^*)/R_n(m_n^*)\leq 1$ for any sufficiently large $n$. Since $R_n(\hat{m})/R_n(m_n^*)=1+o_p(1)$ and $R_n(\hat{\bm{w}})/R_n(\bm{w}_n^*)=1+o_p(1)$, then when $n$ is large enough,
\begin{equation*}
	\frac{1}{2}\{1+o_p(1)\}\leq \frac{R_n(\hat{\bm{w}})}{R_n(\hat{m})}=\frac{R_n(\hat{\bm{w}})}{R_n(\bm{w}_n^*)}\frac{R_n(\bm{w}_n^*)}{R_n(m_n^*)}\frac{R_n(m_n^*)}{R_n(\hat{m})}\leq 1+o_p(1),
\end{equation*}
which yields that $R_n(\hat{\bm{w}})\asymp_p R_n(\hat{m})$. Observe that
\begin{equation}\label{eq:corol}
	\frac{R_n(\hat{m})-R_n(\hat{\bm{w}})}{R_n(\hat{m})}=1-\frac{R_n(\hat{\bm{w}})}{R_n(\bm{w}_n^*)}\frac{R_n(m_n^*)}{R_n(\hat{m})}+\frac{R_n(\hat{\bm{w}})}{R_n(\bm{w}_n^*)}\frac{\Delta_n}{R_n(m_n^*)}\frac{R_n(m_n^*)}{R_n(\hat{m})}.
\end{equation}
Under Conditions M2 and A1, from Theorem \ref{thm:delta}, $\Delta_n/R_n(m_n^*)\geq c^*$ for some $c^*\in (0,1/2]$ and any sufficiently large $n$. Therefore, when $n$ is large enough,
\begin{eqnarray*}
	1 \geq \left|\frac{R_n(\hat{m})-R_n(\hat{\bm{w}})}{R_n(\hat{m})}\right| \hspace{-6mm}&&\geq \frac{R_n(\hat{\bm{w}})}{R_n(\bm{w}_n^*)}\frac{\Delta_n}{R_n(m_n^*)}\frac{R_n(m_n^*)}{R_n(\hat{m})}-\left|1-\frac{R_n(\hat{\bm{w}})}{R_n(\bm{w}_n^*)}\frac{R_n(m_n^*)}{R_n(\hat{m})}\right| \\
	&&\geq c^*\{1+o_p(1)\}-o_p(1)=c^*\{1+o_p(1)\},
\end{eqnarray*}
which leads to $R_n(\hat{m})-R_n(\hat{\bm{w}})\asymp_p R_n(\hat{m})$. Under Condition M1 or Conditions M2 and A2, $\lim_{n\to \infty} \Delta_n/R_n(m_n^*)=0$ from Theorems \ref{thm:delta2} and \ref{thm:delta}. Therefore, by \eqref{eq:corol}, we have
\begin{equation*}
	\frac{R_n(\hat{m})-R_n(\hat{\bm{w}})}{R_n(\hat{m})}\xrightarrow{p} 0,
\end{equation*}
which implies that $R_n(\hat{m})-R_n(\hat{\bm{w}})=o_p\{R_n(\hat{m})\}$ or $R_n(\hat{m})\sim_p R_n(\hat{\bm{w}})$. This completes the proof of Corollary \ref{cor:main}.

\subsection{Proof of Theorem \ref{thm:relax}}

From \eqref{eq:riskRw} and Assumption 3, it is easy to see that the risk of the optimal MA estimator without the total weight constraint is
\begin{equation*}
	R_n(\tilde{\bm{w}}_n^*)=\sum_{m=1}^{M_n} \frac{\mathrm{tr}\{(\mathbf{P}_m-\mathbf{P}_{m-1})\bm{\Omega}\} \bm{\mu}^{\top}(\mathbf{P}_m-\mathbf{P}_{m-1})\bm{\mu}}{\bm{\mu}^{\top}(\mathbf{P}_m-\mathbf{P}_{m-1})\bm{\mu}+\mathrm{tr}\{(\mathbf{P}_m-\mathbf{P}_{m-1})\bm{\Omega}\}}+\bm{\mu}^{\top}(\mathbf{I}_n-\mathbf{P}_{M_n})\bm{\mu},
\end{equation*}
which, along with \eqref{eq:riskma} and Assumption 2, yields that
\begin{equation*}
	R_n(\bm{w}_n^*)-R_n(\tilde{\bm{w}}_n^*)=\frac{\{\mathrm{tr}(\mathbf{P}_1\bm{\Omega})\}^2}{\bm{\mu}^{\top}\mathbf{P}_1\bm{\mu}+\mathrm{tr}(\mathbf{P}_1\bm{\Omega})}\leq \mathrm{tr}(\mathbf{P}_1\bm{\Omega})< c_2\nu_1.
\end{equation*}
Furthermore, if Assumptions 4--6 hold, we have $R_n(\bm{w}_n^*)\to \infty$ from Theorem \ref{thm:mn}(ii). Therefore, $R_n(\bm{w}_n^*)- R_n(\tilde{\bm{w}}_n^*)=o\{R_n(\bm{w}_n^*)\}$, which completes the proof of Theorem \ref{thm:relax}.

\subsection{Two Lemmas and Their Proofs}

Before giving the proof of Theorems \ref{thm:discrete}, we need some lemmas. Let $\lceil a\rceil$ denote the least integer greater than or equal to $a$. We first present the following lemma on an expression of $R_n(\bm{w}_{n,N}^*)$.

\begin{lemma}\label{lem:RwnN}
	Suppose that Assumptions 3 and 6 hold. For any sufficiently large $n$, the optimal risk of MA restricted to $\mathcal{W}_n(N)$ is
	\begin{eqnarray*}
		R_n(\bm{w}_{n,N}^*)=\hspace{-6mm}&&\mathrm{tr}(\mathbf{P}_1\mathbf{\Omega})+\bm{\mu}^{\top}(\mathbf{I}_n-\mathbf{P}_{M_n})\bm{\mu} \\
		&&+\sum_{i=i_{n,N}+1}^{N}\sum_{m=m_n(\frac{2i+1}{2N})+1}^{m_n(\frac{2i-1}{2N})} \Biggl[\left(\frac{i}{N}\right)^2\mathrm{tr}\{(\mathbf{P}_m-\mathbf{P}_{m-1})\bm{\Omega}\} +\left(1-\frac{i}{N}\right)^2\bm{\mu}^{\top}(\mathbf{P}_m-\mathbf{P}_{m-1})\bm{\mu} \Bigg] \\
		&&+\sum_{m=m_n(\frac{2i_{n,N}+1}{2N})+1}^{M_n} \Biggl[\left(\frac{i_{n,N}}{N}\right)^2\mathrm{tr}\{(\mathbf{P}_m-\mathbf{P}_{m-1})\bm{\Omega}\}+\left(1-\frac{i_{n,N}}{N}\right)^2\bm{\mu}^{\top}(\mathbf{P}_m-\mathbf{P}_{m-1})\bm{\mu} \Bigg],
	\end{eqnarray*}
	where $i_{n,N}=\left\lceil N\gamma_{n,M_n}^*-\frac{1}{2}\right\rceil$ and $m_n(z)$
	for $z\in (\gamma_{n,q_n}^*,1)$ is an integer in $\{1,\ldots,q_n\}$ satisfying
	\begin{equation}\label{eq:mndef}
		\theta_{n,m_n(z)}> \frac{z}{(1-z)n}\geq \theta_{n,m_n(z)+1},
	\end{equation}
	and $m_n(z_0)=1$ for any $z_0\geq 1$.
\end{lemma}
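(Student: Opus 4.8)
The plan is to exploit the separable structure of the risk and reduce the constrained discrete optimization to coordinate‑wise rounding. Writing $\gamma_m=\sum_{j=m}^{M_n}w_j$ with $\mathbf{P}_0=\mathbf{0}$, the bijection $w_m=\gamma_m-\gamma_{m+1}$ (setting $\gamma_{M_n+1}=0$) identifies $\mathcal{W}_n(N)$ with the set of sequences $1=\gamma_1\ge\gamma_2\ge\cdots\ge\gamma_{M_n}\ge 0$ whose entries all lie on the grid $\{0,1/N,\ldots,1\}$: the monotonicity encodes $w_m\ge 0$, the value $\gamma_1=1$ encodes $\sum_m w_m=1$, and membership in the grid is inherited from that of the $w_m$. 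From the expansion \eqref{eq:riskRw} the risk is separable,
\begin{equation*}
R_n(\bm{w})=\sum_{m=1}^{M_n} f_m(\gamma_m)+\bm{\mu}^{\top}(\mathbf{I}_n-\mathbf{P}_{M_n})\bm{\mu},\qquad f_m(\gamma)=(a_m+b_m)\gamma^2-2a_m\gamma+a_m,
\end{equation*}
where $a_m=\bm{\mu}^{\top}(\mathbf{P}_m-\mathbf{P}_{m-1})\bm{\mu}$ and $b_m=\mathrm{tr}\{(\mathbf{P}_m-\mathbf{P}_{m-1})\bm{\Omega}\}$. Each $f_m$ is a convex parabola with unconstrained minimizer $\gamma_{n,m}^*=a_m/(a_m+b_m)=\theta_{n,m}/(\theta_{n,m}+1/n)$ as in \eqref{eq:gamma}, and one computes $f_m(i/N)=(i/N)^2 b_m+(1-i/N)^2 a_m$ and $f_1(1)=b_1=\mathrm{tr}(\mathbf{P}_1\bm{\Omega})$.

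The key step is to note that, by Assumption 3, $\theta_{n,m}$ is non‑increasing in $m$, hence so is $\gamma_{n,m}^*$. Consequently, if for each $m\ge 2$ we replace $\gamma_{n,m}^*$ by its nearest grid point (with midpoint ties resolved downward, the convention encoded by the strict/non‑strict inequalities in \eqref{eq:mndef}) and keep $\gamma_1=1$, the resulting grid sequence is still non‑increasing, because rounding to a common uniform grid is weakly monotone and $1$ dominates every other entry; thus the rounded sequence is feasible for $\mathcal{W}_n(N)$. Since the objective is separable and each $f_m$ is minimized over the grid at exactly this rounded value (the coordinate $m=1$ being pinned to $f_1(1)=\mathrm{tr}(\mathbf{P}_1\bm{\Omega})$ by the constraint $\gamma_1=1$), any feasible competitor has risk at least the term‑by‑term grid minimum, which this sequence attains. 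Therefore coordinate‑wise rounding is the global minimizer over $\mathcal{W}_n(N)$, giving $R_n(\bm{w}_{n,N}^*)=\mathrm{tr}(\mathbf{P}_1\bm{\Omega})+\bm{\mu}^{\top}(\mathbf{I}_n-\mathbf{P}_{M_n})\bm{\mu}+\sum_{m\ge 2}f_m(i_m/N)$, where $i_m/N$ is the grid point nearest $\gamma_{n,m}^*$.

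It remains to organize $\sum_{m\ge 2}f_m(i_m/N)$ by the common rounded value $i/N$. Using $\gamma_{n,m}^*>z\iff\theta_{n,m}>z/\{(1-z)n\}$, the definition \eqref{eq:mndef} shows that $m_n(z)$ is the largest index with $\gamma_{n,m}^*>z$; hence $\gamma_{n,m}^*$ rounds to $i/N$ precisely when $\tfrac{2i-1}{2N}<\gamma_{n,m}^*\le\tfrac{2i+1}{2N}$, i.e.\ when $m_n(\tfrac{2i+1}{2N})<m\le m_n(\tfrac{2i-1}{2N})$. Grouping the terms $m\ge 2$ by this index $i$ and substituting $f_m(i/N)=(i/N)^2 b_m+(1-i/N)^2 a_m$ produces the double sum in the statement, where $i_{n,N}=\lceil N\gamma_{n,M_n}^*-\tfrac12\rceil$ is exactly the rounded value of the smallest optimum $\gamma_{n,M_n}^*$, so the lowest group is truncated at $m=M_n$ (rather than at $m_n(\tfrac{2i_{n,N}-1}{2N})$), yielding the final single sum; the convention $m_n(z_0)=1$ for $z_0\ge 1$ makes the top group ($i=N$, covering the $m\ge 2$ rounding up to $1$) start at $m=2$, consistent with treating $m=1$ separately. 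The main obstacle is precisely this boundary bookkeeping: pinning the tie‑breaking so that the ranges $m_n(\tfrac{2i+1}{2N})<m\le m_n(\tfrac{2i-1}{2N})$ partition $\{2,\ldots,M_n\}$ exactly, checking that the truncation at $M_n$ affects only the lowest group, and verifying that for sufficiently large $n$ (under Assumptions 3 and 6) the quantities $m_n(z)$ and $i_{n,N}$ are well defined and $m=1$ is the unique coordinate pinned to the grid point $1$. The optimization itself is immediate once the separable/monotone structure is in place.
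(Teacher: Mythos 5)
Your proposal is correct and follows essentially the same route as the paper's proof: reparametrize by the cumulative weights $\gamma_m=\sum_{j\ge m}w_j$, use the separable quadratic form of \eqref{eq:riskRw} with per-coordinate minimizers $\gamma_{n,m}^*$, round each coordinate to the nearest grid point (feasibility following from the monotonicity of $\{\gamma_{n,m}^*\}$ under Assumption 3), and regroup the terms via $m_n(z)$. Your explicit check that the rounded sequence remains non-increasing, and hence corresponds to a genuine element of $\mathcal{W}_n(N)$, is a point the paper leaves implicit, but the argument is the same.
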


\begin{proof}
	Since $\bm{w}\in \mathcal{W}_n(N)$, we have $\gamma_m=\sum_{j=m}^{M_n} w_j \in \{0,1/N,2/N,\ldots,1\}$. Observe that
	\begin{eqnarray*}
		f_m(\gamma_m)\equiv\hspace{-6mm}&& \gamma_m^2\Bigl[ \bm{\mu}^{\top}( \mathbf{P}_m- \mathbf{P}_{m-1})\bm{\mu}+\mathrm{tr}\{( \mathbf{P}_m- \mathbf{P}_{m-1})\bm{\Omega}\}\Bigr] -2\gamma_m \bm{\mu}^{\top}( \mathbf{P}_m- \mathbf{P}_{m-1})\bm{\mu}+\bm{\mu}^{\top}( \mathbf{P}_m- \mathbf{P}_{m-1})\bm{\mu} \\
		=\hspace{-6mm}&&\Bigl[ \bm{\mu}^{\top}( \mathbf{P}_m- \mathbf{P}_{m-1})\bm{\mu}+\mathrm{tr}\{( \mathbf{P}_m- \mathbf{P}_{m-1})\bm{\Omega}\}\Bigr](\gamma_m-\gamma_{n,m}^*)^2+\gamma_{n,m}^* \mathrm{tr}\{( \mathbf{P}_m- \mathbf{P}_{m-1})\bm{\Omega}\},
	\end{eqnarray*}
	where $\gamma_{n,m}^*$ is defined in \eqref{eq:gamma}. Since $\{\gamma_{n,m}^*\}_{m=1}^{M_n}$ is non-increasing, it is easy to see that
	\begin{equation*}
		\min_{\gamma_m\in \{0,1/N,2/N,\ldots,1\}} f_m(\gamma_m)=f_m\left(\frac{i}{N}\right),\quad \text{when~}\frac{2i-1}{2N}<\gamma_{n,m}^*\leq \frac{2i+1}{2N},\quad i=0,\ldots,N.
	\end{equation*}
	Therefore, from \eqref{eq:riskRw}, we have
	\begin{eqnarray}\label{eq:diff}
		R_n(\bm{w}_{n,N}^*)\hspace{-6mm}&&=\mathrm{tr}( \mathbf{P}_1\bm{\Omega})+\sum_{m=2}^{M_n} \min_{\gamma_m\in \{0,1/N,2/N,\ldots,1\}} f_m(\gamma_m)+\bm{\mu}^{\top}( \mathbf{I}_n- \mathbf{P}_{M_n})\bm{\mu} \nonumber\\
		&&=\mathrm{tr}( \mathbf{P}_1\bm{\Omega})+\sum_{m=2}^{M_n}\sum_{i=0}^N  f_m\left(\frac{i}{N}\right) \mathbf{1}\biggl\{\frac{2i-1}{2N}<\gamma_{n,m}^*\leq \frac{2i+1}{2N}\biggr\}+\bm{\mu}^{\top}( \mathbf{I}_n- \mathbf{P}_{M_n})\bm{\mu} \nonumber\\
		&&=\mathrm{tr}( \mathbf{P}_1\bm{\Omega})+\sum_{m=2}^{M_n}\sum_{i=0}^N \left[\left(\frac{i}{N}\right)^2 \mathrm{tr}\{( \mathbf{P}_m- \mathbf{P}_{m-1})\bm{\Omega}\}+\left(1-\frac{i}{N}\right)^2 \bm{\mu}^{\top}( \mathbf{P}_m- \mathbf{P}_{m-1})\bm{\mu}\right] \nonumber \\
		&&\quad \times \mathbf{1}\biggl\{\frac{2i-1}{2N}<\gamma_{n,m}^*\leq \frac{2i+1}{2N}\biggr\}+\bm{\mu}^{\top}( \mathbf{I}_n- \mathbf{P}_{M_n})\bm{\mu},
	\end{eqnarray}
	where $\mathbf{1}\{\cdot\}$ denotes the usual indicator function. By the definition of $m_n(z)$ in \eqref{eq:mndef}, we have $\frac{2i-1}{2N}<\gamma_{n,m}^*\leq \frac{2i+1}{2N}$ if and only if $m_n(\frac{2i+1}{2N})+1\leq m\leq m_n(\frac{2i-1}{2N})$ for $i=i_{n,N}+1,\ldots,N$ and $\frac{2i_{n,N}-1}{2N}<\gamma_{n,m}^*\leq \frac{2i_{n,N}+1}{2N}$ if and only if $m_n(\frac{2i_{n,N}+1}{2N})+1\leq m\leq M_n$, where
	$$
	i_{n,N}=\min\left\{i=0,1,\ldots,N: \gamma_{n,M_n}^*\leq \frac{2i+1}{2N}\right\}=\left\lceil N\gamma_{n,M_n}^*-\frac{1}{2}\right\rceil
	.$$
	Combining the above fact with \eqref{eq:diff}, it is easy to obtain the expression of $R_n(\bm{w}_{n,N}^*)$ in Lemma \ref{lem:RwnN}. Moreover, we can obtain another expression of $R_n(\bm{w}_{n,N}^*)$ as follows:
	\begin{eqnarray}\label{eq:newRwn}
		R_n(\bm{w}_{n,N}^*)\hspace{-6mm}&&=R_n(\bm{w}_n^*)+\sum_{m=2}^{M_n} \biggl(\Bigl[ \bm{\mu}^{\top}( \mathbf{P}_m- \mathbf{P}_{m-1})\bm{\mu}+\mathrm{tr}\{( \mathbf{P}_m- \mathbf{P}_{m-1})\bm{\Omega}\}\Bigr] \nonumber\\
		&& \quad \times \sum_{i=0}^N \biggl(\frac{i}{N}-\gamma_{n,m}^*\biggr)^2 \mathbf{1}\biggl\{\frac{2i-1}{2N}<\gamma_{n,m}^*\leq \frac{2i+1}{2N}\biggr\}\biggr).
	\end{eqnarray}
This completes the proof of Lemma \ref{lem:RwnN}.
\end{proof}

Note that $m_n(1/2)=m_n^{**}$. Next, we present some elementary properties of $m_n(z)$ in the following lemma.

\begin{lemma}\label{lem:mxi}
	Suppose that Assumptions 3 and 5 hold. Then, $m_n(z)$ for $z\in (\gamma_{n,q_n}^*,1)$ defined in Lemma \ref{lem:RwnN} satisfies the following properties.
	\begin{itemize}
		\item[(i)] $m_n(z)$ is a non-increasing function in $z$; $\lim_{n\to \infty} m_n(z)=\infty$ for any fixed $z\in (\gamma_{n,q_n}^*,1)$.
		\item[(ii)] If there exist constants $k>1$, $\eta<1$, and $K>1$ such that $\theta_{n, \lfloor k l_n\rfloor}/\theta_{n, l_n}\leq \eta$ for any $n\geq K$ and any integer sequence $\{l_n\}$ satisfied $\lim_{n\to \infty} l_n=\infty$, then $m_n(z_1)\asymp m_n(z_2)$ for any $\gamma_{n,q_n}^*<z_1\neq z_2< 1$.
	\end{itemize}
\end{lemma}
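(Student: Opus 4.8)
The plan is to first re-express $m_n(z)$ in terms of the sequence $\{\gamma_{n,m}^*\}$ from \eqref{eq:gamma}, which turns both claims into statements about crossing a monotone threshold. Since the map $\theta\mapsto \theta/(\theta+1/n)$ is strictly increasing and sends the value $z/\{(1-z)n\}$ to $z$, the defining inequalities \eqref{eq:mndef} are equivalent to $\gamma_{n,m_n(z)}^*>z\geq \gamma_{n,m_n(z)+1}^*$; that is, $m_n(z)$ is the largest index $m$ with $\gamma_{n,m}^*>z$. Because Assumption 3 makes $\{\theta_{n,m}\}$ non-increasing and $\theta\mapsto\theta/(\theta+1/n)$ is increasing, the sequence $\{\gamma_{n,m}^*\}$ is non-increasing in $m$; hence enlarging $z$ can only decrease the last index exceeding it, which gives the monotonicity in (i).

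For the divergence in (i), I would mimic the proof of Theorem \ref{thm:mn}(i). For fixed $z$ the threshold $z/\{(1-z)n\}\to 0$ as $n\to\infty$, while Assumption 5 guarantees that for each fixed index $m$ there is $\bar\theta_m>0$ with $\theta_{n,m}\geq\bar\theta_m$ for all large $n$. Thus for an arbitrary constant $C$, for large enough $n$ one has $\theta_{n,\lfloor C\rfloor+1}\geq \bar\theta_{\lfloor C\rfloor+1}>z/\{(1-z)n\}\geq \theta_{n,m_n(z)+1}$, and the non-increasing ordering forces $m_n(z)+1>\lfloor C\rfloor+1$, i.e.\ $m_n(z)>C-1$. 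Letting $C\to\infty$ yields $m_n(z)\to\infty$.

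For (ii), by symmetry of $\asymp$ assume $z_1<z_2$ and set $c_i=z_i/(1-z_i)$, so that $0<c_1<c_2$; part (i) already gives $m_n(z_1)\geq m_n(z_2)$, hence $m_n(z_2)=O(m_n(z_1))$, and it remains to bound $m_1:=m_n(z_1)$ in terms of $m_2:=m_n(z_2)$. From \eqref{eq:mndef} we have $\theta_{n,m_1}>c_1/n$ while $\theta_{n,m_2+1}\leq c_2/n$. I would iterate the decay hypothesis along the integer sequence $l_n^{(0)}=m_2+1$ and $l_n^{(i+1)}=\lfloor k\,l_n^{(i)}\rfloor$: since $m_2\to\infty$ by (i), each $l_n^{(i)}$ is non-decreasing in $i$ and diverges in $n$, so the hypothesis applies at every step and telescopes to $\theta_{n,l_n^{(j)}}\leq \eta^{j}\theta_{n,m_2+1}\leq \eta^{j}c_2/n$, whereas $l_n^{(j)}\leq k^{j}(m_2+1)$ because each floor only lowers the value. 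Now fix $J$ with $\eta^{J}<c_1/c_2$. If $m_1\geq l_n^{(J)}$, monotonicity of $\{\theta_{n,m}\}$ would force $\theta_{n,m_1}\leq \theta_{n,l_n^{(J)}}\leq \eta^{J}c_2/n<c_1/n$, contradicting $\theta_{n,m_1}>c_1/n$; hence $m_1<l_n^{(J)}\leq k^{J}(m_2+1)$, which gives $m_1=O(m_2)$ and therefore $m_n(z_1)\asymp m_n(z_2)$.

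The main obstacle I anticipate is the careful handling of the floor operations in the iteration of the third paragraph: the decay hypothesis is phrased for a single ratio $\theta_{n,\lfloor kl_n\rfloor}/\theta_{n,l_n}$, so composing it $J$ times requires verifying that each iterate $l_n^{(i)}$ is itself a diverging integer sequence (so the hypothesis legitimately applies at each step) and tracking that the floors give the clean upper bound $l_n^{(j)}\leq k^{j}(m_2+1)$. Everything else is bookkeeping with the fixed constants $c_1,c_2,\eta,k$ together with the monotonicity supplied by Assumption 3.
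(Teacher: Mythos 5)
Your proposal is correct and follows essentially the same route as the paper's proof: part (i) by combining the monotonicity of $\{\theta_{n,m}\}$ (equivalently of $\{\gamma_{n,m}^*\}$) with the argument of Theorem \ref{thm:mn}(i) applied to the threshold $z/\{(1-z)n\}$, and part (ii) by choosing an iterate count $J$ (the paper's $s$) with $\eta^{J}\leq \frac{z_1/(1-z_1)}{z_2/(1-z_2)}$ and telescoping the decay hypothesis to force $m_n(z_1)<k^{J}\{m_n(z_2)+1\}$. Your explicit verification that each floored iterate remains a diverging integer sequence is exactly the bookkeeping the paper leaves implicit in the step $\eta^{s}\theta_{n,m_n(z_2)+1}\geq \theta_{n,\lfloor k^{s}(m_n(z_2)+1)\rfloor}$.
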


\begin{proof}
	The results of (i) are easily shown by Assumption 1 and the similar arguments in the proof of Lemma \ref{thm:mn}. Next, we shall prove (ii). Without loss of generality, we assume $z_1<z_2$, which follows that $m_n(z_1)\geq m_n(z_2)$. Observe there exists an integer $s>0$ such that $\frac{z_1}{1-z_1}\geq \frac{z_2}{1-z_2} \eta^s$. Then, by the defination of $m_n(\xi)$, we have
	\begin{equation}\label{eq:lema1}
		\theta_{n,m_n(z_1)}> \frac{z_1}{(1-z_1)n}\geq \frac{z_2}{(1-z_2)n} \eta^s \geq \eta^s \theta_{n,m_n(z_2)+1}\geq \theta_{n,\lfloor k^s(m_n(z_2)+1)\rfloor}.
	\end{equation}
	Thus, $m_n(z_1)< \lfloor k^s(m_n(z_2)+1)\rfloor$, which, along with $m_n(z_1)\geq m_n(z_2)$, yields that $m_n(z_1)\asymp m_n(z_2)$. This completes the proof of Lemma \ref{lem:mxi}.
\end{proof}

\subsection{Proof of Theorem \ref{thm:discrete}}\label{sec:thm3}

Observe that
\begin{equation}\label{eq:thm3id}
	\bm{\mu}^{\top}( \mathbf{P}_m- \mathbf{P}_{m-1})\bm{\mu}+\mathrm{tr}\{( \mathbf{P}_m- \mathbf{P}_{m-1})\bm{\Omega}\}=\frac{\bm{\mu}^{\top}( \mathbf{P}_m- \mathbf{P}_{m-1})\bm{\mu}}{\gamma_{n,m}^*}=\frac{\mathrm{tr}\{( \mathbf{P}_m- \mathbf{P}_{m-1})\bm{\Omega}\}}{1-\gamma_{n,m}^*},
\end{equation}
which, along with \eqref{eq:newRwn}, yields that
\begin{eqnarray*}
	&&R_n(\bm{w}_{n,N}^*)-R_n(\bm{w}_n^*) \\
	&&\quad =\sum_{m=2}^{m_n^*} \mathrm{tr}\{( \mathbf{P}_m- \mathbf{P}_{m-1})\bm{\Omega}\} \sum_{i=0}^N \frac{\left(\frac{i}{N}-\gamma_{n,m}^*\right)^2}{1-\gamma_{n,m}^*} \mathbf{1}\biggl\{\frac{2i-1}{2N}<\gamma_{n,m}^*\leq \frac{2i+1}{2N}\biggr\} \\
	&&\qquad+\sum_{m=m_n^*+1}^{M_n} \bm{\mu}^{\top}( \mathbf{P}_m- \mathbf{P}_{m-1})\bm{\mu} \sum_{i=0}^N \frac{\left(\frac{i}{N}-\gamma_{n,m}^*\right)^2}{\gamma_{n,m}^*} \mathbf{1}\biggl\{\frac{2i-1}{2N}<\gamma_{n,m}^*\leq \frac{2i+1}{2N}\biggr\} \\
	&&\quad\leq \frac{1}{2N} \sum_{m=2}^{m_n^*} \mathrm{tr}\{( \mathbf{P}_m- \mathbf{P}_{m-1})\bm{\Omega}\}+\frac{1}{2N} \sum_{m=m_n^*+1}^{M_n} \bm{\mu}^{\top}( \mathbf{P}_m- \mathbf{P}_{m-1})\bm{\mu} \\
	&&\quad = \frac{1}{2N} \Bigl[\mathrm{tr}\{( \mathbf{P}_{m_n^*}- \mathbf{P}_{1})\bm{\Omega}\}+\bm{\mu}^{\top}( \mathbf{P}_{M_n}- \mathbf{P}_{m_n^*})\bm{\mu}\Bigr] \\
	&&\quad \leq  \frac{1}{2N} R_n(m_n^*),
\end{eqnarray*}
where the first inequality is derived by the fact that when $\frac{2i-1}{2N}<\gamma_{n,m}^*\leq \frac{2i+1}{2N}$
for $i=0,\ldots,N$, 
\begin{equation*}
	\frac{\left(\frac{i}{N}-\gamma_{n,m}^*\right)^2}{1-\gamma_{n,m}^*}\leq \frac{1}{2N}\quad\text{and}\quad  \frac{\left(\frac{i}{N}-\gamma_{n,m}^*\right)^2}{\gamma_{n,m}^*}\leq \frac{1}{2N},
\end{equation*}
which can be easily verified. Therefore, $R_n(\bm{w}_{n,N}^*)- R_n(\bm{w}_n^*)\leq \frac{1}{2N}R_n(m_n^*)$.

When Conditions M2 and A1 hold, our task is to examine $R_n(\bm{w}_{n,N}^*)-R_n(\bm{w}_n^*)$ has the same order as $R_n(\bm{w}_n^*)$. We consider two scenarios: $M_n\geq m_n(\frac{2N-1}{2N})$ and $M_n< m_n(\frac{2N-1}{2N})$ but $M_n/m_n^{**} \geq \underline{c}$ for any sufficiently large $n$.

\underline{First, consider $M_n\geq m_n(\frac{2N-1}{2N})$}. Define $t_n^N=\min \{t\in \mathbb{N}: \lfloor kt\rfloor\geq m_n(\frac{2N-1}{2N})+1\}$. Then it follows from Theorem \ref{thm:mn} and \citet{peng2021} that $\lim_{n\to \infty} t_n^N=\infty$ and $\lfloor kt_n^N\rfloor \sim m_n(\frac{2N-1}{2N})$, respectively. Using the same arguments as that in \eqref{eq:condlb} and \eqref{eq:cond1}, we have
\begin{eqnarray}\label{eq:thm3up}
	&&\sum_{m=2}^{m_n^*} \mathrm{tr}\{( \mathbf{P}_m- \mathbf{P}_{m-1})\bm{\Omega}\}  (1-\gamma_{n,m}^*) \mathbf{1}\bigl\{\gamma_{n,m}^*>1-1/(2N)\bigr\}\notag \\
	&&\quad=\sum_{m=2}^{m_n(\frac{2N-1}{2N})} \mathrm{tr}\{( \mathbf{P}_m- \mathbf{P}_{m-1})\bm{\Omega}\} -\sum_{m=2}^{m_n(\frac{2N-1}{2N})} \frac{\mathrm{tr}\{( \mathbf{P}_m- \mathbf{P}_{m-1})\bm{\Omega}\}}{1+1/(n\theta_{n,m})}\notag \\
	&&\quad \geq \mathrm{tr}\{( \mathbf{P}_{m_n(\frac{2N-1}{2N})}- \mathbf{P}_{t_n^N})\bm{\Omega}\}-\frac{1}{1+1/(n\theta_{n,t_n^N})} \mathrm{tr}\{( \mathbf{P}_{\lfloor kt_n^N \rfloor}- \mathbf{P}_{t_n^N})\bm{\Omega}\}\notag \\
	&&\quad \geq \frac{1}{1+\frac{\delta}{2N-1}}\mathrm{tr}\left[ \left\{\left(1+\frac{\delta}{2N-1}\right) \mathbf{P}_{m_n(\frac{2N-1}{2N})}-  \mathbf{P}_{\lfloor kt_n^N \rfloor}-\frac{\delta}{2N-1}  \mathbf{P}_{t_n}\right\}\bm{\Omega}\right] \notag \\
	&&\quad \geq \frac{c_1}{1+\frac{\delta}{2N-1}} \left\{ \left(1+\frac{\delta}{2N-1}\right)\nu_{m_n(\frac{2N-1}{2N})}- \nu_{\lfloor kt_n^N \rfloor}-\frac{\delta}{2N-1} \nu_{t_n^N}\right\}\notag \\
	&&\quad \geq \frac{c_1}{1+\frac{\delta}{2N-1}} (\nu_{m_n(\frac{2N-1}{2N})}- \nu_{\lfloor kt_n^N \rfloor})+\frac{c_1\delta}{2N-1+\delta} \left\{m_n\left(\frac{2N-1}{2N}\right)-t_n^N\right\}\notag \\
	&&\quad \sim \frac{c_1(k-1)\delta}{k(2N-1+\delta)} m_n\biggl(\frac{2N-1}{2N}\biggr) \asymp m_n^*,
\end{eqnarray}
where the second inequality is derived by the fact
\begin{equation*}
	\frac{1}{1+1/(n\theta_{n,t_n^N})}\leq \frac{1}{1+\delta/(n\theta_{n, \lfloor kt_n^N\rfloor})}\leq \frac{1}{1+\delta/(n\theta_{n, m_n(\frac{2N-1}{2N})+1})}\leq \frac{1}{1+\delta/(2N-1)},
\end{equation*}
and the last line is due to $\nu_{m_n(\frac{2N-1}{2N})}\sim \nu_{\lfloor kt_n^N \rfloor}$, $t_n^N\sim m_n(\frac{2N-1}{2N})/k$, and Lemma \ref{lem:mxi}(ii). Since
\begin{equation*}
	\frac{1}{2N} R_n(m_n^*)\geq R_n(\bm{w}_{n,N}^*)-R_n(\bm{w}_n^*)\geq \sum_{m=2}^{m_n^*} \mathrm{tr}\{( \mathbf{P}_m- \mathbf{P}_{m-1})\bm{\Omega}\}  (1-\gamma_{n,m}^*) \mathbf{1}\bigl\{\gamma_{n,m}^*>1-1/(2N)\bigr\}, 
\end{equation*}
using \eqref{eq:thm3up} and $\mathrm{tr}( \mathbf{P}_{m_n^*}\bm{\Omega}) \asymp R_n(m_n^*) \asymp R_n(\bm{w}_n^*)$, we have $R_n(\bm{w}_{n,N}^*)-R_n(\bm{w}_n^*)\asymp R_n(\bm{w}_n^*)$.

\underline{Next, consider $M_n< m_n(\frac{2N-1}{2N})$ but $M_n/m_n^{**} \geq \underline{c}$}. Using \eqref{eq:case2} and the similar arguments in \eqref{eq:thm3up}, we can also prove $R_n(\bm{w}_{n,N}^*)-R_n(\bm{w}_n^*)\asymp R_n(\bm{w}_n^*)$. This completes the proof of Theorem \ref{thm:discrete} under Conditions M2 and A1.

When Condition M1 or Conditions M2 and A2 hold, $R_n(\bm{w}_{n,N}^*)- R_n(\bm{w}_n^*)=o\{R_n(\bm{w}_n^*)\}$ is directly followed by Theorems \ref{thm:delta2}--\ref{thm:delta} and the fact $R_n(m_n^*)\geq R_n(\bm{w}_{n,N}^*)\geq R_n(\bm{w}_n^*)$.

\subsection{Proof of the Results in Examples 6.1--6.2}\label{subsec:examp}

Using the expression of $R_n(\bm{w}_{n,N}^*)$ in Lemma \ref{lem:RwnN},
we have that for any sufficiently large $n$,
\begin{eqnarray*}
	\frac{1}{n}R_n(\bm{w}_{n,N}^*)=\hspace{-6mm}&&\frac{\sigma^2}{n}+\sum_{i=i_{n,N}+1}^{N}\sum_{m=m_n(\frac{2i+1}{2N})+1}^{m_n(\frac{2i-1}{2N})} \left\{ \frac{\sigma^2}{n}\left(\frac{i}{N}\right)^2+\left(1-\frac{i}{N}\right)^2\beta_m^2\right\} \\
	&&+\sum_{m=m_n(\frac{2i_{n,N}+1}{2N})+1}^{M_n} \left\{ \frac{\sigma^2}{n}\left(\frac{i_{n,N}}{N}\right)^2+\left(1-\frac{i_{n,N}}{N}\right)^2\beta_m^2\right\}+\sum_{m=M_n+1}^{p_n} \beta_m^2.
\end{eqnarray*}

\noindent\underline{Proof of the results in Example 6.1}: When $\beta_m=m^{-\alpha}$ for $\alpha>1/2$, we have $m_n(\frac{2i+1}{2N})\sim (\frac{2N}{2i+1}-1)^{\frac{1}{2\alpha}}(\frac{n}{\sigma^2})^{\frac{1}{2\alpha}}$ for $i=i_{n,N},\ldots,N-1$ and $m_n^{**}\sim (\frac{n}{\sigma^2})^{\frac{1}{2\alpha}}$. When $M_n\equiv M$ is fixed as $n\to \infty$, $i_{n,N}=N$ for any sufficiently large $n$. Therefore,
\begin{equation*}
	\frac{1}{n}R_n(\bm{w}_{n,N}^*)=\frac{M\sigma^2}{n}+\sum_{m=M+1}^{p_n} m^{-2\alpha}\sim \sum_{m=M+1}^{\infty} m^{-2\alpha}.
\end{equation*}
When $M_n\to \infty$ as $n\to \infty$, the optimal risk of MA restricted to $\mathcal{W}_n(N)$ satisfies
\begin{eqnarray}\label{eq:exm51}
	\frac{1}{n}R_n(\bm{w}_{n,N}^*)\hspace{-6mm}&&\sim \frac{\sigma^2}{n}+\sum_{i=i_{n,N}+1}^{N} \int_{m_n(\frac{2i+1}{2N})}^{m_n(\frac{2i-1}{2N})} \left\{ \frac{\sigma^2}{n}\left(\frac{i}{N}\right)^2+\left(1-\frac{i}{N}\right)^2 x^{-2\alpha}\right\}\,dx \notag \\
	&&\quad +\int_{m_n(\frac{2i_{n,N}+1}{2N})}^{M_n} \left\{ \frac{\sigma^2}{n}\left(\frac{i_{n,N}}{N}\right)^2+\left(1-\frac{i_{n,N}}{N}\right)^2 x^{-2\alpha}\right\}\,dx+\int_{M_n}^{p_n} x^{-2\alpha}\,dx \notag \\
	&&\equiv \frac{\sigma^2}{n}+\Pi_{n1}+\Pi_{n2}+\frac{1}{2\alpha-1}(M_n^{-2\alpha+1}-p_n^{-2\alpha+1}).
\end{eqnarray}
Since $m_n^{**}\sim (\frac{n}{\sigma^2})^{\frac{1}{2\alpha}}$, it is easy to see that $i_{n,N}\sim i_{n,N}^*\equiv\Big\lceil \frac{N}{1+(\frac{M_n}{m_n^{**}})^{2\alpha}}-\frac{1}{2}\Big\rceil$. We first simplify $\Pi_{n1}$ as follows:
\begin{eqnarray}\label{eq:exmpi1}
	\Pi_{n1}\hspace{-6mm}&&=\frac{\sigma^2}{n}\sum_{i=i_{n,N}+1}^{N} \left(\frac{i}{N}\right)^2 \left\{m_n\left(\frac{2i-1}{2N}\right)-m_n\left(\frac{2i+1}{2N}\right)\right\} \notag \\
	&& \quad -\frac{1}{2\alpha-1}\sum_{i=i_{n,N}+1}^{N} \left(1-\frac{i}{N}\right)^2 \left\{m_n\left(\frac{2i-1}{2N}\right)^{1-2\alpha}-m_n\left(\frac{2i+1}{2N}\right)^{1-2\alpha}\right\} \notag \\
	&&=\frac{\sigma^2}{n}\frac{2}{N}\sum_{i=i_{n,N}}^{N-1} \left(\frac{2i+1}{2N}\right)m_n\left(\frac{2i+1}{2N}\right)+\frac{1}{2\alpha-1}\frac{2}{N}\sum_{i=i_{n,N}}^{N-1} \left(1-\frac{2i+1}{2N}\right)m_n\left(\frac{2i+1}{2N}\right)^{1-2\alpha} \notag \\
	&&\quad -\frac{\sigma^2}{n}+\frac{\sigma^2}{n} \left(\frac{i_{n,N}}{N}\right)^2 m_n\left(\frac{2i_{n,N}+1}{2N}\right)-\frac{1}{2\alpha-1}\left(1-\frac{i_{n,N}}{2N}\right)^2 m_n\left(\frac{2i_{n,N}+1}{2N}\right)^{1-2\alpha} \notag \\
	&&\sim \frac{2\alpha}{2\alpha-1}\left(\frac{n}{\sigma^2}\right)^{\frac{1}{2\alpha}-1} \frac{2}{N}\sum_{i=i_{n,N}^*}^{N-1} \left(\frac{2i+1}{2N}\right)^{1-\frac{1}{2\alpha}} \left(1-\frac{2i+1}{2N}\right)^{\frac{1}{2\alpha}}-\frac{\sigma^2}{n} \notag \\
	&&\quad +\frac{\sigma^2}{n} \left(\frac{i_{n,N}}{N}\right)^2 m_n\left(\frac{2i_{n,N}+1}{2N}\right)-\frac{1}{2\alpha-1}\left(1-\frac{i_{n,N}}{2N}\right)^2 m_n\left(\frac{2i_{n,N}+1}{2N}\right)^{1-2\alpha}.
\end{eqnarray}
Next, we simplify $\Pi_{n2}$ as follows:
\begin{eqnarray}\label{eq:exmpi2}
	\Pi_{n2}\hspace{-6mm}&&=\frac{\sigma^2}{n}\left(\frac{i_{n,N}}{N}\right)^2 \left\{M_n-m_n\left(\frac{2i_{n,N}+1}{2N}\right)\right\} \notag \\
	&&\quad -\frac{1}{2\alpha-1} \left(1-\frac{i_{n,N}}{N}\right)^2 \left\{M_n^{1-2\alpha}-m_n\left(\frac{2i_{n,N}+1}{2N}\right)^{1-2\alpha}\right\} \notag \\
	&& \sim \left(\frac{n}{\sigma^2}\right)^{\frac{1}{2\alpha}-1} \left(\frac{i_{n,N}^*}{N}\right)^2 \frac{M_n}{m_n^{**}}-\frac{1}{2\alpha-1}\left(\frac{n}{\sigma^2}\right)^{\frac{1}{2\alpha}-1} \left(1-\frac{i_{n,N}^*}{N}\right)^2 \left(\frac{M_n}{m_n^{**}}\right)^{1-2\alpha} \notag \\
	&& \quad -\frac{\sigma^2}{n} \left(\frac{i_{n,N}}{N}\right)^2 m_n\left(\frac{2i_{n,N}+1}{2N}\right)+\frac{1}{2\alpha-1}\left(1-\frac{i_{n,N}}{2N}\right)^2 m_n\left(\frac{2i_{n,N}+1}{2N}\right)^{1-2\alpha}.
\end{eqnarray}
Combining \eqref{eq:exm51}, \eqref{eq:exmpi1}, and \eqref{eq:exmpi2}, we have that when $M_n\to \infty$ as $n\to \infty$,
\begin{equation}\label{eq:ex5.1riskdis}
	\frac{1}{n}R_n(\bm{w}_{n,N}^*)\sim \frac{2\alpha}{2\alpha-1}\left(\frac{n}{\sigma^2}\right)^{\frac{1}{2\alpha}-1} \psi_{n,N}+\frac{1}{2\alpha-1}(M_n^{-2\alpha+1}-p_n^{-2\alpha+1}),
\end{equation}
where
\begin{equation*}
	\psi_{n,N}=\frac{2}{N}\sum_{i=i_{n,N}^*}^{N-1} \left(\frac{2i+1}{2N}\right)^{1-\frac{1}{2\alpha}} \left(1-\frac{2i+1}{2N}\right)^{\frac{1}{2\alpha}}+\frac{2\alpha-1}{2\alpha} \left(\frac{i_{n,N}^*}{N}\right)^2 \frac{M_n}{m_n^{**}}-\frac{1}{2\alpha}\left(1-\frac{i_{n,N}^*}{N}\right)^2 \left(\frac{M_n}{m_n^{**}}\right)^{1-2\alpha}.
\end{equation*}

When $M_n\to \infty$ as $n\to \infty$, it is shown in \citet{peng2021} that the optimal risk of MA with the weight set $\mathcal{W}_n$ satisfies
\begin{equation}\label{eq:ex5.1w}
	\frac{1}{n}R_n(\bm{w}_n^*)\sim \frac{1}{2\alpha}\left(\frac{n}{\sigma^2}\right)^{\frac{1}{2\alpha}-1} \left\{\frac{\pi}{\sin(\frac{\pi}{2\alpha})}-B\left(\frac{1}{1+\left(\frac{M_n}{m_n^{**}}\right)^{2\alpha}}; 1-\frac{1}{2\alpha},\frac{1}{2\alpha}\right)\right\}+\frac{1}{2\alpha-1}(M_n^{-2\alpha+1}-p_n^{-2\alpha+1}).
\end{equation}
When $M_n\equiv M$ is fixed as $n\to \infty$,
\begin{equation*}
	\frac{1}{n}R_n(\bm{w}_n^*)=\frac{\sigma^2}{n}+\sum_{m=2}^M \frac{1}{\frac{n}{\sigma^2}+m^{2\alpha}}+\sum_{m=M+1}^{p_n} m^{-2\alpha}\sim \sum_{m=M+1}^{\infty} m^{-2\alpha}.
\end{equation*}
Therefore, we consider different conditions on $M_n$ as follows.

(i) When $M_n\equiv M$ is fixed as $n\to \infty$,
\begin{equation*}
	\frac{1}{n}R_n(\bm{w}_{n,N}^*)\sim \frac{1}{n}R_n(\bm{w}_n^*)\sim \sum_{m=M+1}^{\infty} m^{-2\alpha}.
\end{equation*}

(ii) When $M_n\to \infty$ but $M_n/m_n^{**}\to 0$ as $n\to \infty$, $i_{n,N}^*=N$ for any sufficiently large $n$ and thus $\psi_{n,N}=o(1)$, which, along with the fact that $B(1;1-\frac{1}{2\alpha},\frac{1}{2\alpha})=\frac{\pi}{\sin(\frac{\pi}{2\alpha})}$, yields that
\begin{equation*}
	\frac{1}{n}R_n(\bm{w}_{n,N}^*)\sim \frac{1}{n}R_n(\bm{w}_n^*)\sim \frac{1}{2\alpha-1}(M_n^{-2\alpha+1}-p_n^{-2\alpha+1})\sim \frac{M_n^{-2\alpha+1}}{2\alpha-1}.
\end{equation*}

(iii) When $M_n/m_n^{**}\geq \underline{c}$ for some $\underline{c}>0$, let us find the lower bound of $R_n(\bm{w}_{n,N}^*)- R_n(\bm{w}_n^*)$. If $M_n\geq m_n(\frac{2N-1}{2N})$, note
\begin{eqnarray*}
	&& \sum_{m=2}^{m_n^*} \mathrm{tr}\{( \mathbf{P}_m- \mathbf{P}_{m-1})\bm{\Omega}\}  (1-\gamma_{n,m}^*) \mathbf{1}\bigl\{\gamma_{n,m}^*>1-1/(2N)\bigr\} \\
	&&\quad = \sum_{m=2}^{m_n(\frac{2N-1}{2N})} \sigma^2\left(1-\frac{\beta_m^2}{\beta_m^2+\sigma^2/n}\right)\geq \sum_{m=\lfloor m_n(\frac{2N-1}{2N})/2\rfloor}^{m_n(\frac{2N-1}{2N})} \frac{\sigma^4/n}{m^{-2\alpha}+\sigma^2/n} \\
	&&\quad \geq \left\lfloor \frac{1}{2} m_n\left(\frac{2N-1}{2N}\right)\right\rfloor \frac{\sigma^4/n}{\lfloor m_n(\frac{2N-1}{2N})/2\rfloor^{-2\alpha}+\sigma^2/n}\sim \frac{(2N-1)^{-\frac{1}{2\alpha}}\sigma^2}{2^{2\alpha+1}(2N-1)+2} \left(\frac{n}{\sigma^2}\right)^{\frac{1}{2\alpha}}.
\end{eqnarray*}
If $m_n(\frac{2N-1}{2N})>M_n\geq \underline{c}m_n^{**}$, we also have
\begin{eqnarray*}
		&& \sum_{m=2}^{m_n^*} \mathrm{tr}\{( \mathbf{P}_m- \mathbf{P}_{m-1})\bm{\Omega}\}  (1-\gamma_{n,m}^*) \mathbf{1}\bigl\{\gamma_{n,m}^*>1-1/(2N)\bigr\} \\
		&&\quad \geq \left\lfloor \underline{c} m_n^{**}/2\right\rfloor \frac{\sigma^4/n}{\lfloor \underline{c} m_n^{**}/2\rfloor^{-2\alpha}+\sigma^2/n}\sim \frac{\underline{c}\sigma^2}{2^{2\alpha+1}\underline{c}^{-2\alpha}+2} \left(\frac{n}{\sigma^2}\right)^{\frac{1}{2\alpha}}.
\end{eqnarray*}
As a result, $R_n(\bm{w}_{n,N}^*)- R_n(\bm{w}_n^*)$ can be lower bounded by $\frac{\varpi\sigma^2}{2^{2\alpha+1}\varpi^{-2\alpha}+2} \left(\frac{n}{\sigma^2}\right)^{\frac{1}{2\alpha}}$, where $\varpi=\min\{\underline{c},(2N-1)^{-\frac{1}{2\alpha}}\}$. Moreover, if $\lim_{n\to \infty} M_n/m_n^{**}= \kappa\in (0,\infty]$ and $M_n=o(p_n)$ are satisfied, it follows from \eqref{eq:ex5.1riskdis} and \eqref{eq:ex5.1w} that
\begin{equation*}
	\lim_{n\to \infty} \frac{R_n(\bm{w}_n^*)}{R_n(\bm{w}_{n,N}^*)}=\frac{1}{\psi_N^*+\frac{\kappa^{-2\alpha+1}}{2\alpha}} \left[\frac{2\alpha-1}{4\alpha^2}\left\{ \frac{\pi}{\sin(\frac{\pi}{2\alpha})}-B\left(\frac{1}{1+\kappa^{2\alpha}}; 1-\frac{1}{2\alpha},\frac{1}{2\alpha}\right)\right\}+\frac{\kappa^{-2\alpha+1}}{2\alpha}\right],
\end{equation*}
where
\begin{equation*}
	\psi_N^*=\frac{2}{N}\sum_{i=i_N^*}^{N-1} \left(\frac{2i+1}{2N}\right)^{1-\frac{1}{2\alpha}} \left(1-\frac{2i+1}{2N}\right)^{\frac{1}{2\alpha}}+\frac{2\alpha-1}{2\alpha} \left(\frac{i_N^*}{N}\right)^2 \kappa-\frac{1}{2\alpha}\left(1-\frac{i_N^*}{N}\right)^2 \kappa^{1-2\alpha}
\end{equation*}
and $i_N^*=\left\lceil \frac{N}{1+\kappa^{2\alpha}}-\frac{1}{2}\right\rceil$. It is easy to see that $\{\psi_N^*\}_{N=1}^{\infty}$ is a strictly decreasing sequence with $\psi_1^*=1-\frac{\kappa^{-2\alpha+1}}{2\alpha}$.
Moreover, we can prove that
\begin{eqnarray*}
	\lim_{N\to \infty} \psi_N^*\hspace{-6mm}&&=2\int_{\frac{1}{1+\kappa^{2\alpha}}}^1 t^{1-\frac{1}{2\alpha}}(1-t)^{\frac{1}{2\alpha}}\,dt+\frac{2\alpha-1}{2\alpha}\frac{\kappa}{(1+\kappa^{2\alpha})^2}-\frac{1}{2\alpha}\frac{\kappa^{1+2\alpha}}{(1+\kappa^{2\alpha})^2} \\
	&&=\frac{2\alpha-1}{4\alpha^2} \int_{\frac{1}{1+\kappa^{2\alpha}}}^1 t^{-\frac{1}{2\alpha}}(1-t)^{\frac{1}{2\alpha}-1}\,dt \\
	&&=\frac{2\alpha-1}{4\alpha^2}\left\{ \frac{\pi}{\sin(\frac{\pi}{2\alpha})}-B\left(\frac{1}{1+\kappa^{2\alpha}}; 1-\frac{1}{2\alpha},\frac{1}{2\alpha}\right)\right\},
\end{eqnarray*}
where the last equality follows from the fact that $B(1;1-\frac{1}{2\alpha},\frac{1}{2\alpha})=\frac{\pi}{\sin(\frac{\pi}{2\alpha})}$. Therefore, for any fixed $N\geq 1$, 
\begin{equation*}
	\lim_{n\to \infty} \frac{R_n(\bm{w}_n^*)}{R_n(\bm{w}_{n,N}^*)}<1.
\end{equation*}

\noindent\underline{Proof of the results in Example 6.2}: When $\beta_m=\exp(-cm)$ for $c>0$, we have $m_n(\frac{2i+1}{2N})\sim \frac{1}{2c} \log \left(\frac{n}{\sigma^2}\right)$ for $i=i_{n,N},\ldots,N-1$ and $m_n^{**}\sim \frac{1}{2c} \log \left(\frac{n}{\sigma^2}\right)$. The optimal risk of MA satisfies
\begin{eqnarray}\label{eq:ex5.2Rw}
	\frac{1}{n}R_n(\bm{w}_n^*)\hspace{-6mm}&&=\frac{\sigma^2}{n}+\sum_{m=2}^{M_n} \frac{1}{\frac{n}{\sigma^2}+\exp(2cm)}+\sum_{m=M_n+1}^{p_n} \exp(-2cm) \notag \\
	&&\sim \sum_{m=1}^{M_n} \frac{1}{\frac{n}{\sigma^2}+\exp(2cm)}+\frac{\exp(-2cM_n)-\exp(-2cp_n)}{\exp(2c)-1}.
\end{eqnarray}
We consider different conditions on $M_n$ as follows.

(i) When $\limsup_{n\to \infty} M_n/m_n^{**}<1$, $M_n<m_n^{**}$ for any sufficiently large $n$. Thus,
\begin{equation}\label{eq:ex5.2Rm}
	\frac{1}{n}R_n(m_n^*)=\frac{M_n\sigma^2}{n}+\sum_{m=M_n+1}^{p_n} \exp(-2cm)=\frac{M_n\sigma^2}{n}+\frac{\exp(-2cM_n)-\exp(-2cp_n)}{\exp(2c)-1}.
\end{equation}
By $2cm_n^{**}\sim \log \left(\frac{n}{\sigma^2}\right)$ and $\lim_{n\to \infty} \log(M_n)/m^{**}=0$, we observe that
\begin{equation*}
	\limsup_{n\to \infty} \log\left\{\frac{M_n\sigma^2/n}{\exp(-2cM_n)}\right\} \Big/(2cm_n^{**})=\limsup_{n\to \infty} \frac{\log M_n-\log(\frac{n}{\sigma^2})+2cM_n}{2cm_n^{**}}\leq -1+\limsup_{n\to \infty} \frac{M_n}{m_n^{**}}<0,
\end{equation*}
which implies that $\frac{M_n\sigma^2/n}{\exp(-2cM_n)}\to 0$ as $n\to \infty$. Moreover, as $n\to \infty$,
\begin{equation*}
\frac{\exp(-2cp_n)}{\exp(-2cM_n)}=\exp\{-2c(p_n-M_n)\}\leq \exp\{-2c(m_n^{**}-M_n)\}\to 0.
\end{equation*}
Therefore, we have $\frac{1}{n}R_n(m_n^*)\sim \frac{\exp(-2cM_n)}{\exp(2c)-1}$. Since $\sum_{m=1}^{M_n} \{\frac{n}{\sigma^2}+\exp(2cm)\}^{-1}\leq \frac{\sigma^2}{n}M_n$, from \eqref{eq:ex5.2Rw},  we have $\frac{1}{n}R_n(\bm{w}_n^*)\sim \frac{\exp(-2cM_n)}{\exp(2c)-1}$. Therefore,
\begin{equation*}
	\frac{1}{n}R_n(m_n^*)\sim \frac{1}{n}R_n(\bm{w}_n^*)\sim \frac{\exp(-2cM_n)}{\exp(2c)-1}.
\end{equation*}

(ii) When $M_n\geq m_n^{**}$ for any sufficiently large $n$, note that as $n\to \infty$,
\begin{equation}\label{eq:ex5.2large}
	\frac{\exp(-2cm_n^{**})}{m_n^{**}\sigma^2/n}\leq \frac{\exp(-2cm_n^{**})}{m_n^{**}\exp\{-2c(m_n^{**}+1)\}}=\frac{\exp(2c)}{m_n^{**}} \to 0,
\end{equation}
where the inequality is due to $\sigma^2/n\geq \exp\{-2c(m_n^{**}+1)\}$ derived from \eqref{eq:optm2}. Therefore, we have
\begin{equation*}
\frac{1}{n}R_n(m_n^*)=\frac{m_n^{**}\sigma^2}{n}+\frac{\exp(-2cm_n^{**})-\exp(-2cp_n)}{\exp(2c)-1}\sim \frac{m_n^{**}\sigma^2}{n}.
\end{equation*}
Next, we investigate $R_n(\bm{w}_n^*)$. From \eqref{eq:ex5.2Rw},
\begin{eqnarray}\label{eq:ex5.2lg1}
	\frac{1}{n}R_n(\bm{w}_n^*)\hspace{-6mm}&& \sim \sum_{m=1}^{M_n} \frac{1}{\frac{n}{\sigma^2}+\exp(2cm)}+\sum_{m=M_n+1}^{p_n} \exp(-2cm) \notag \\
	&&=\sum_{m=1}^{m_n^{**}} \frac{1}{\frac{n}{\sigma^2}+\exp(2cm)}+\sum_{m=m_n^{**}+1}^{M_n} \frac{1}{\frac{n}{\sigma^2}+\exp(2cm)}+\sum_{m=M_n+1}^{p_n} \exp(-2cm).
\end{eqnarray}
For the first term of \eqref{eq:ex5.2lg1}, it is easy to obtain
\begin{eqnarray}\label{eq:ex5.2lg2}
	\sum_{m=1}^{m_n^{**}} \frac{1}{\frac{n}{\sigma^2}+\exp(2cm)}\hspace{-6mm}&& \sim \int_0^{m_n^{**}} \frac{1}{\frac{n}{\sigma^2}+\exp(2cx)}\,dx \notag \\
	&&=\frac{m_n^{**}\sigma^2}{n}-\frac{1}{2c}\frac{\sigma^2}{n} \log\left\{ \frac{1+\frac{\sigma^2}{n}\exp(2cm_n^{**})}{1+\frac{\sigma^2}{n}}\right\} \sim \frac{m_n^{**}\sigma^2}{n},
\end{eqnarray}
where the last ``$\sim$" is due to $\frac{\sigma^2}{n}\exp(2cm_n^{**})<1$ derived from \eqref{eq:optm2}. For the last two terms of \eqref{eq:ex5.2lg1}, using \eqref{eq:ex5.2large}, we have
\begin{eqnarray*}
	&&\sum_{m=m_n^{**}+1}^{M_n} \frac{1}{\frac{n}{\sigma^2}+\exp(2cm)}+\sum_{m=M_n+1}^{p_n} \exp(-2cm) \\
	&&\quad \leq \sum_{m=m_n^{**}+1}^{p_n} \exp(-2cm)=\frac{\exp(-2cm_n^{**})-\exp(-2cp_n)}{\exp(2c)-1}=o\left(\frac{m_n^{**}\sigma^2}{n}\right),
\end{eqnarray*}
which, along with \eqref{eq:ex5.2lg1} and \eqref{eq:ex5.2lg2}, yields that $\frac{1}{n}R_n(\bm{w}_n^*)\sim \frac{m_n^{**}\sigma^2}{n}$. Therefore,
\begin{equation*}
	\frac{1}{n}R_n(m_n^*)\sim \frac{1}{n}R_n(\bm{w}_n^*)\sim \frac{m_n^{**}\sigma^2}{n}\sim \frac{1}{2c} \frac{\sigma^2}{n}\log \left(\frac{n}{\sigma^2}\right).
\end{equation*}

(iii) When $M_n<m_n^{**}$ for any sufficiently large $n$ but $\lim_{n\to \infty} M_n/m_n^{**}=1$, by using the same arguments in \eqref{eq:ex5.2lg2}, we can show that
$\sum_{m=1}^{M_n} \{\frac{n}{\sigma^2}+\exp(2cm)\}^{-1}\sim \frac{M_n\sigma^2}{n}$, which, along with \eqref{eq:ex5.2Rw} and \eqref{eq:ex5.2Rm}, yields that
\begin{equation*}
	\frac{1}{n}R_n(m_n^*)\sim \frac{1}{n}R_n(\bm{w}_n^*)\sim \frac{1}{2c} \frac{\sigma^2}{n}\log \left(\frac{n}{\sigma^2}\right)+\frac{\exp(-2cM_n)-\exp(-2cp_n)}{\exp(2c)-1}.
\end{equation*}

Combining the results of (i)--(iii) and the fact $R_n(m_n^*)\geq R_n(\bm{w}_{n,N}^*)\geq R_n(\bm{w}_n^*)$, we obtain the results of Example 6.2.

\newpage

\baselineskip=14pt
\bibliographystyle{apalike}
\bibliography{refers}

\clearpage\pagebreak\newpage

\pagestyle{empty}

\begin{figure}[htpb]
	\centering
	\begin{subfigure}{\textwidth}\centering
	\includegraphics[scale=0.65]{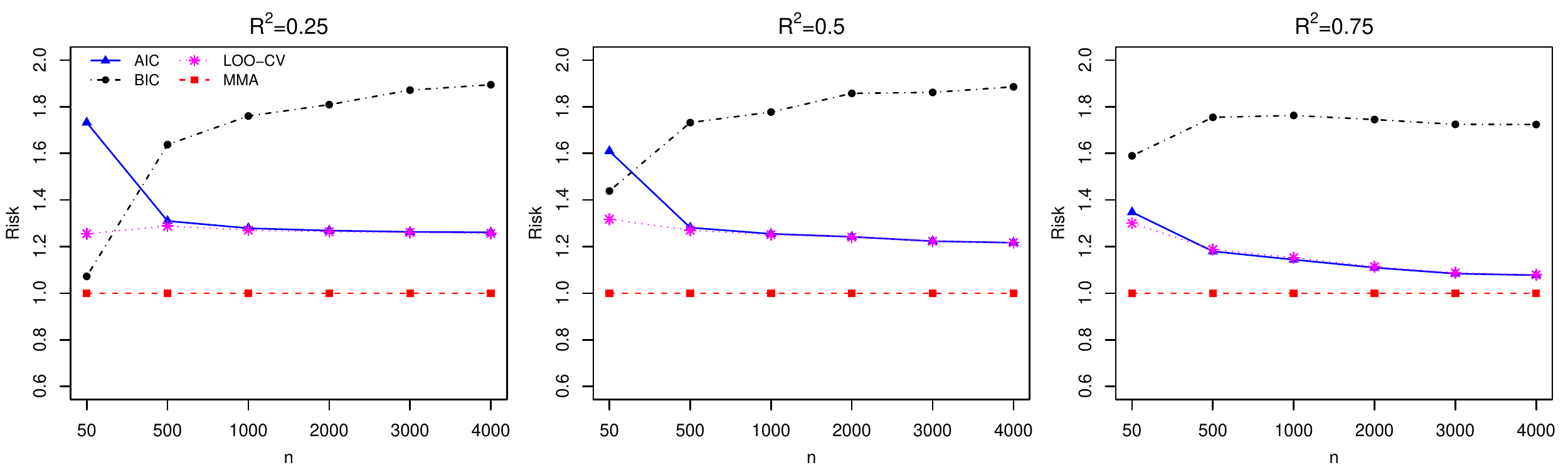}
	\caption{}
	\end{subfigure}
	\begin{subfigure}{\textwidth}\centering
	\includegraphics[scale=0.65]{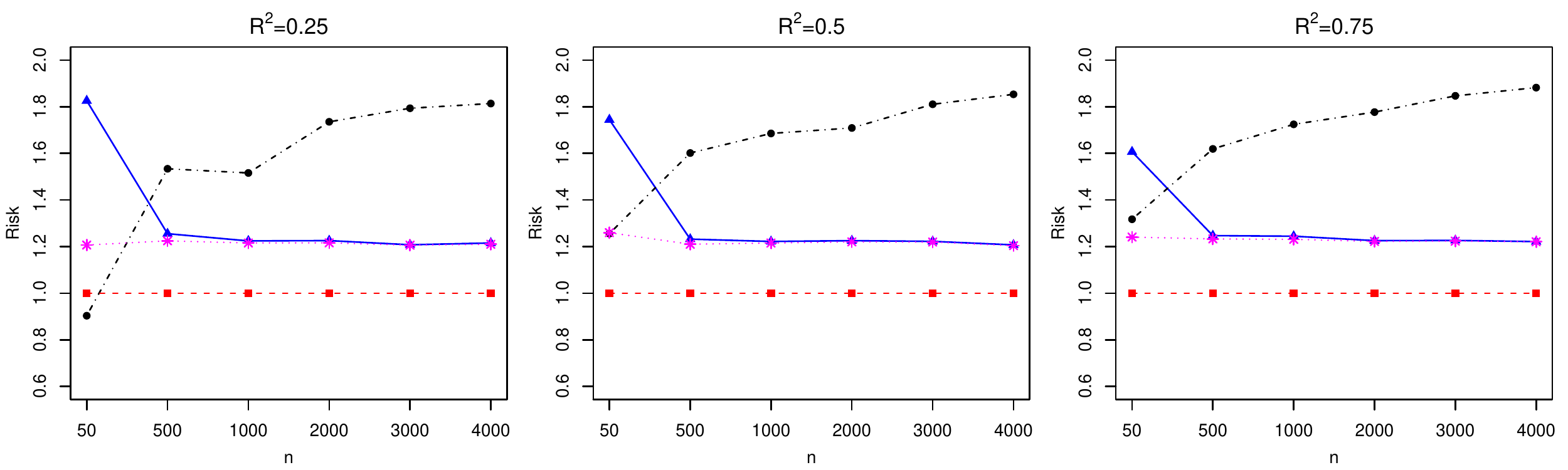}
	\caption{}
\end{subfigure}
	\begin{subfigure}{\textwidth}\centering
	\includegraphics[scale=0.65]{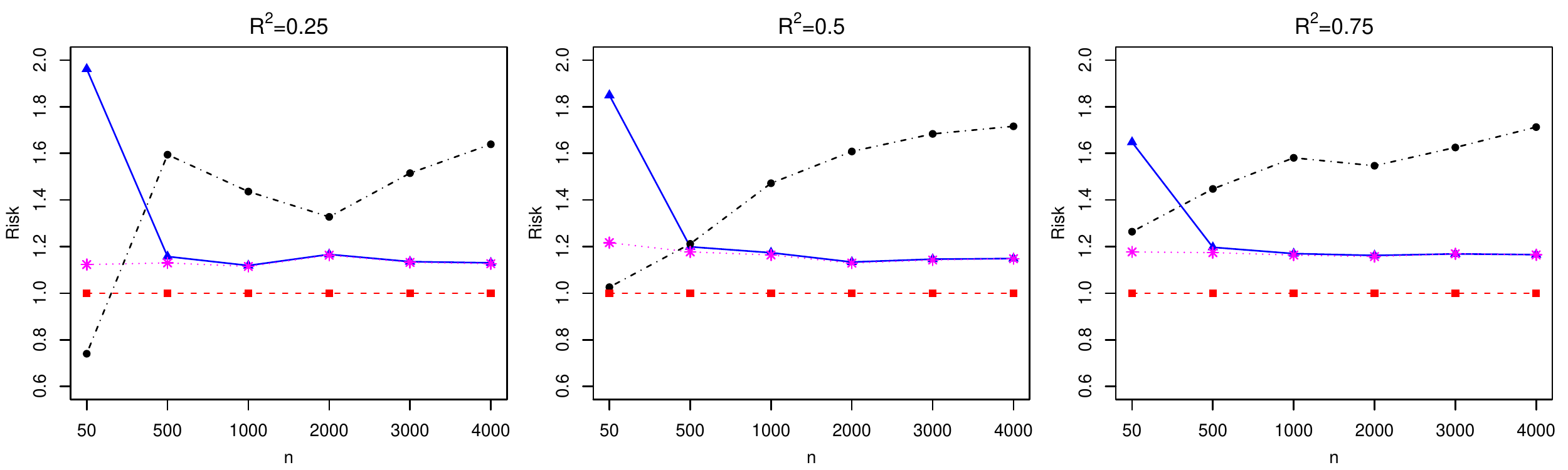}
	\caption{}
\end{subfigure}
	\caption{Simulation results for Example 1 with the case of slowly decaying $\theta_m^*$. Normalized risk functions for AIC, BIC, LOO-CV, and MMA when $\theta_m^*=m^{-2\alpha_1}/\sigma^2$ with $\alpha_1=1$ in row (a), $\alpha_1=1.5$ in row (b), and $\alpha_1=2$ in row (c).}\label{fig:example1_1}
\end{figure}

\clearpage\pagebreak\newpage

\pagestyle{empty}

\begin{figure}[htpb]
	\centering
	\begin{subfigure}{\textwidth}\centering
		\includegraphics[scale=0.65]{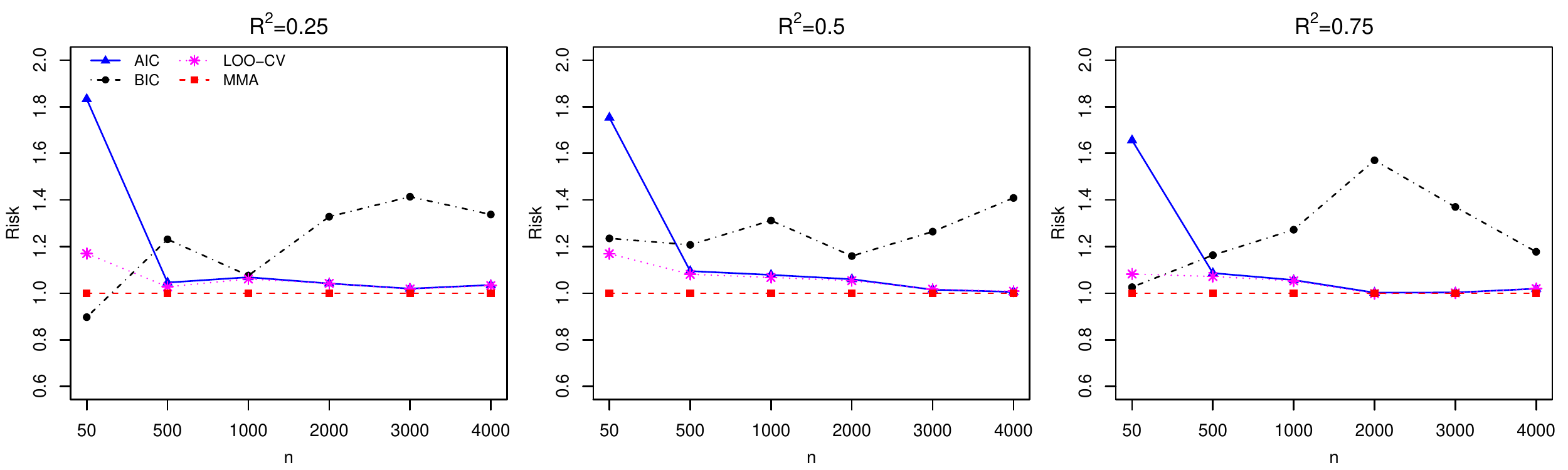}
		\caption{}
	\end{subfigure}
	\begin{subfigure}{\textwidth}\centering
		\includegraphics[scale=0.65]{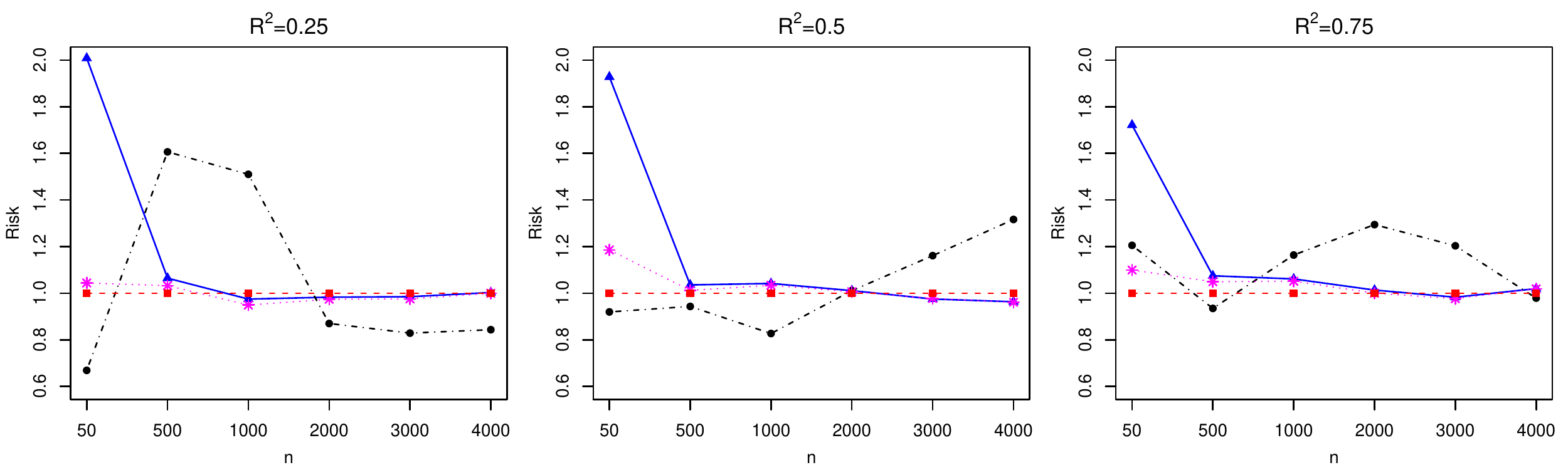}
		\caption{}
	\end{subfigure}
	\begin{subfigure}{\textwidth}\centering
		\includegraphics[scale=0.65]{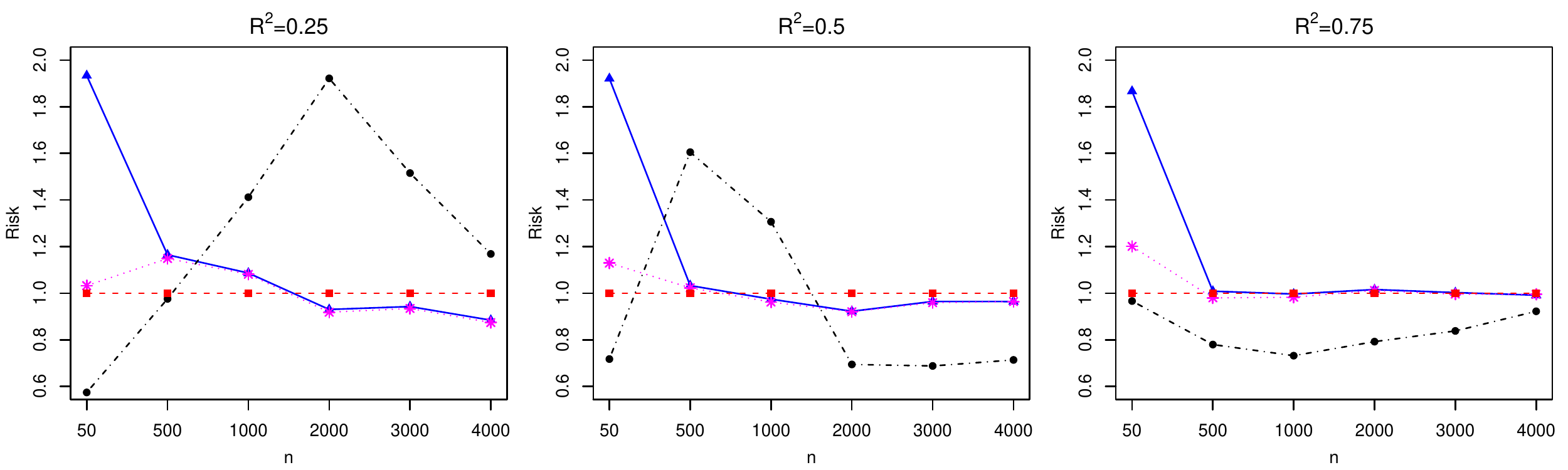}
		\caption{}
	\end{subfigure}
	\caption{Simulation results for Example 1 with the case of fast decaying $\theta_m^*$. Normalized risk functions for AIC, BIC, LOO-CV, and MMA when $\theta_m^*=\exp(-2\alpha_2 m)/\sigma^2$ with $\alpha_2=1$ in row (a), $\alpha_2=1.5$ in row (b), and $\alpha_2=2$ in row (c).}\label{fig:example1_2}
\end{figure}

\clearpage\pagebreak\newpage

\pagestyle{empty}

\begin{figure}[htpb]
	\centering
	\begin{subfigure}{\textwidth}\centering
		\includegraphics[scale=0.65]{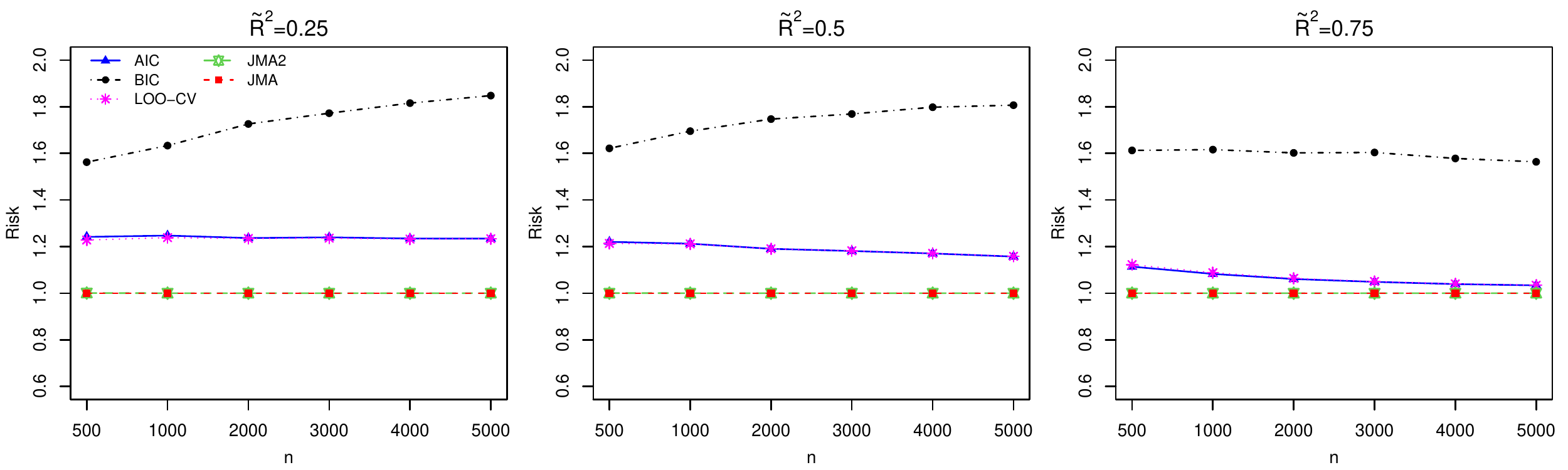}
		\caption{}
	\end{subfigure}
	\begin{subfigure}{\textwidth}\centering
		\includegraphics[scale=0.65]{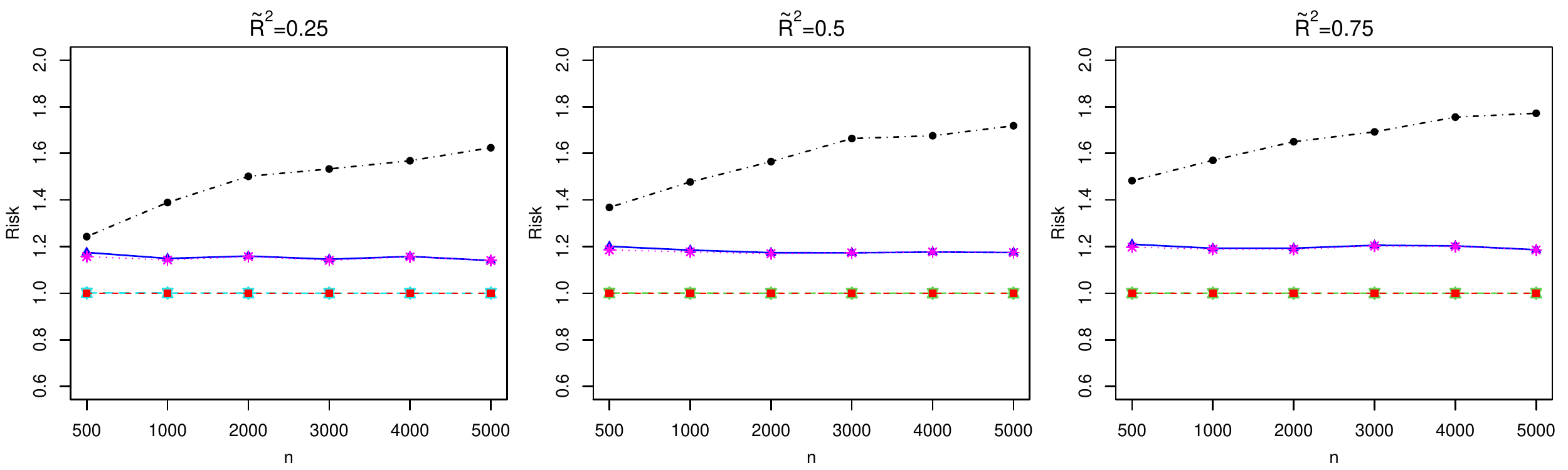}
		\caption{}
	\end{subfigure}
	\begin{subfigure}{\textwidth}\centering
		\includegraphics[scale=0.65]{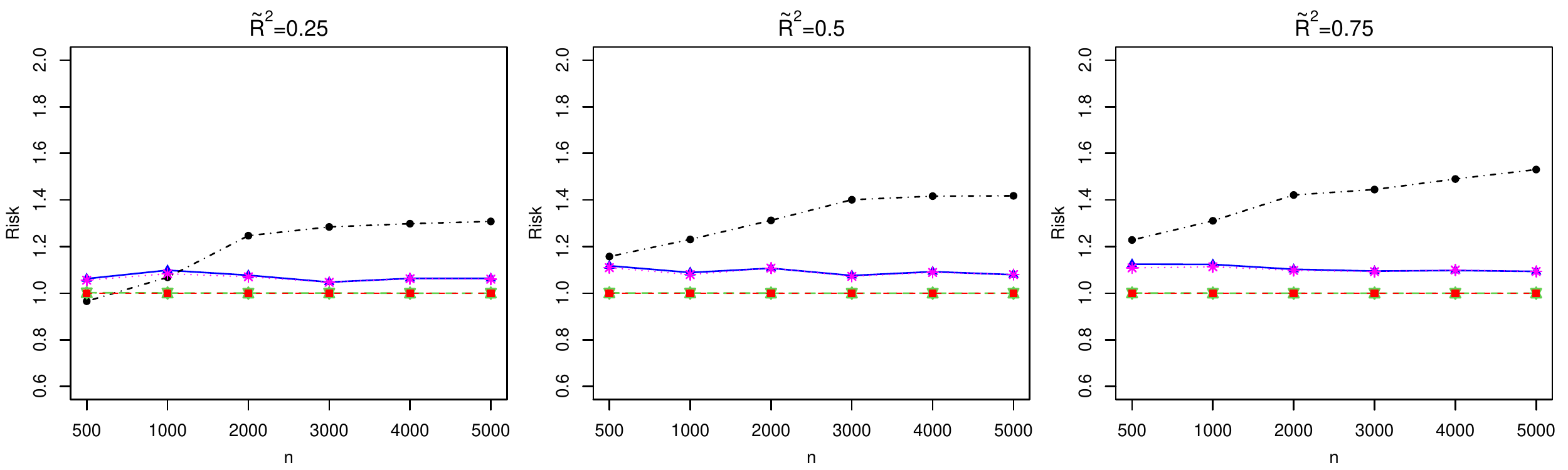}
		\caption{}
	\end{subfigure}
	\caption{Simulation results for Example 2 with the case of slowly decaying $\theta_m^*$. Normalized risk functions for AIC, BIC, LOO-CV, JMA2, and JMA when $\theta_m^*=c^2 m^{-2\alpha_1}$ with $\alpha_1=1$ in row (a), $\alpha_1=1.5$ in row (b), and $\alpha_1=2$ in row (c).}\label{fig:example2_1}
\end{figure}

\clearpage\pagebreak\newpage

\pagestyle{empty}

\begin{figure}[htpb]
	\centering
	\begin{subfigure}{\textwidth}\centering
		\includegraphics[scale=0.65]{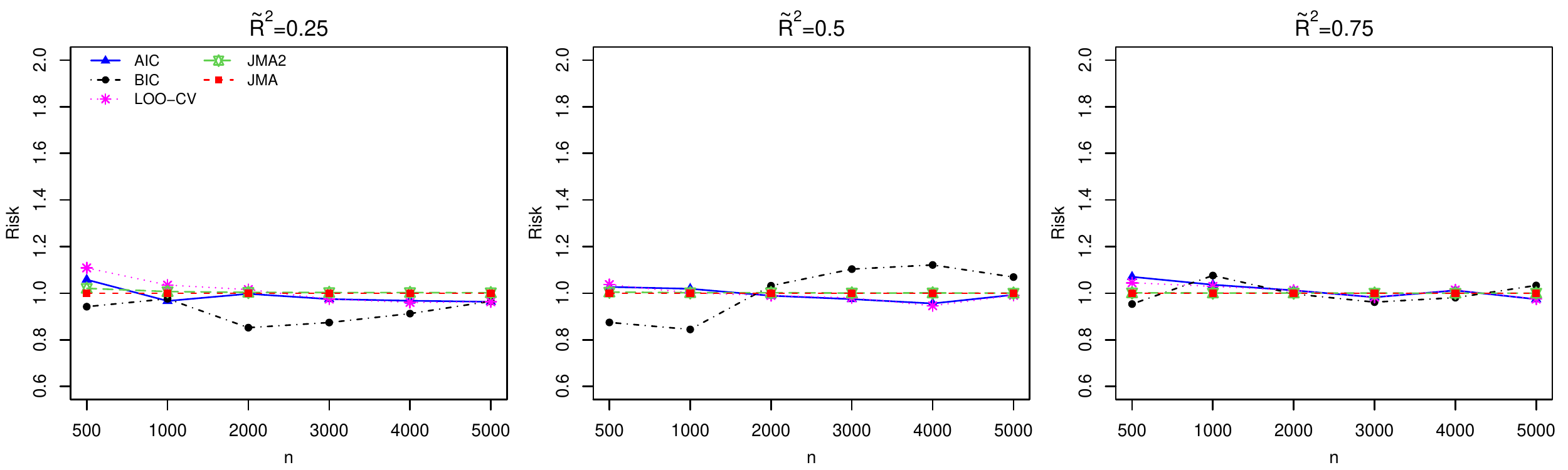}
		\caption{}
	\end{subfigure}
	\begin{subfigure}{\textwidth}\centering
		\includegraphics[scale=0.65]{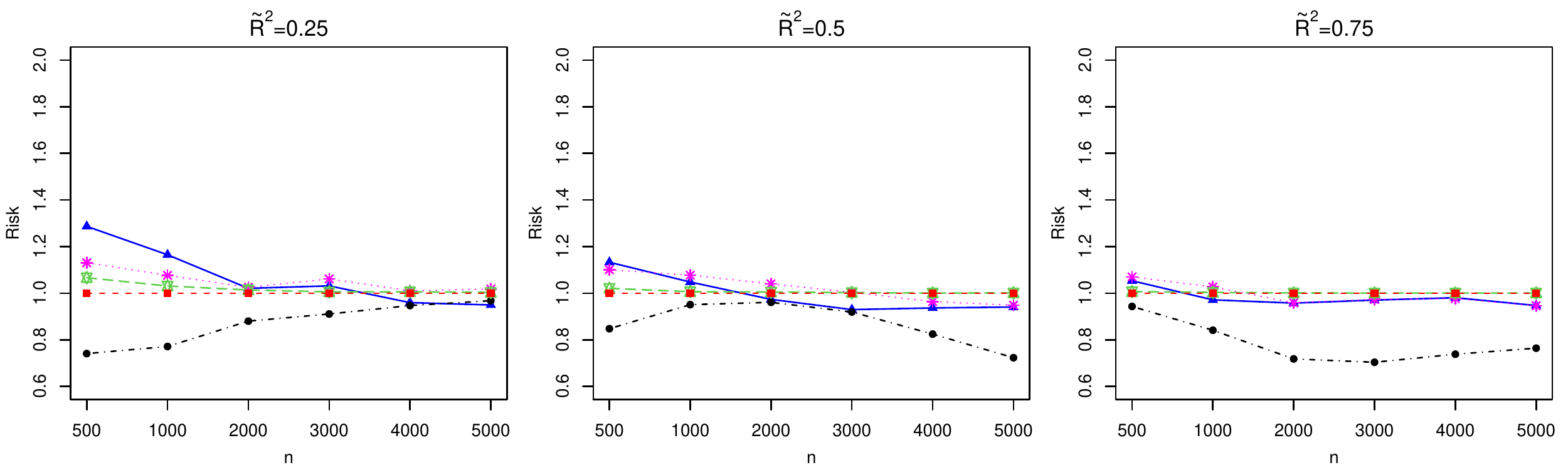}
		\caption{}
	\end{subfigure}
	\begin{subfigure}{\textwidth}\centering
		\includegraphics[scale=0.65]{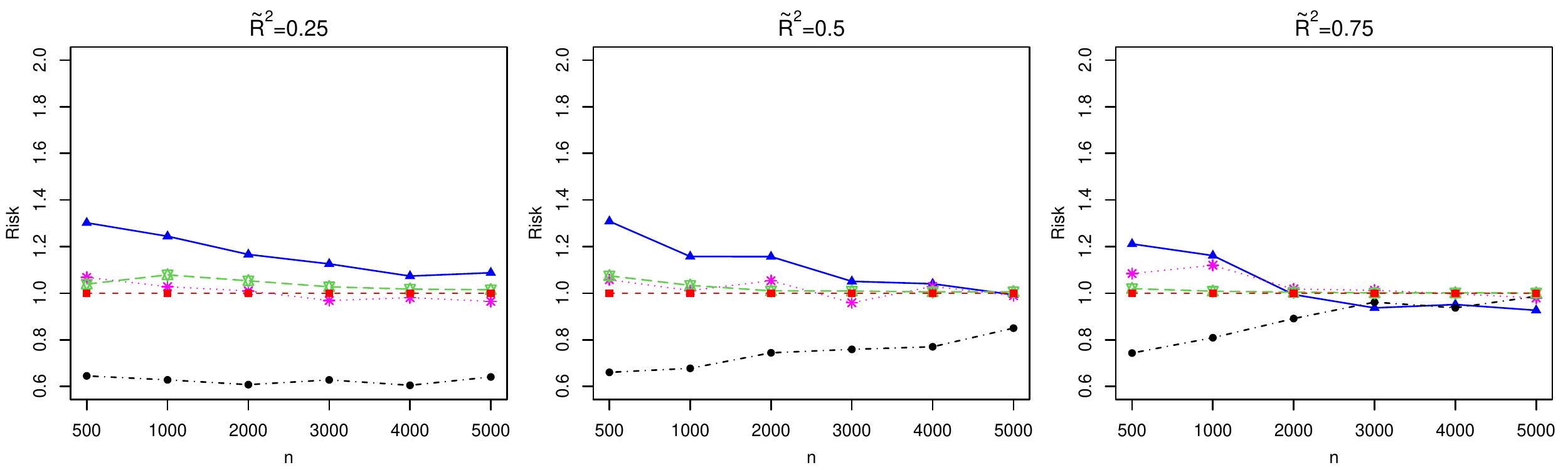}
		\caption{}
	\end{subfigure}
	\caption{Simulation results for Example 2 with the case of fast decaying $\theta_m^*$. Normalized risk functions for AIC, BIC, LOO-CV, JMA2, and JMA when $\theta_m^*=c^2 \exp(-2\alpha_2 m)$ with $\alpha_2=1$ in row (a), $\alpha_2=1.5$ in row (b), and $\alpha_2=2$ in row (c).}\label{fig:example2_2}
\end{figure}

\clearpage\pagebreak\newpage

\pagestyle{empty}

\begin{figure}[htpb]
	\centering
	\begin{subfigure}{\textwidth}\centering
		\includegraphics[scale=0.65]{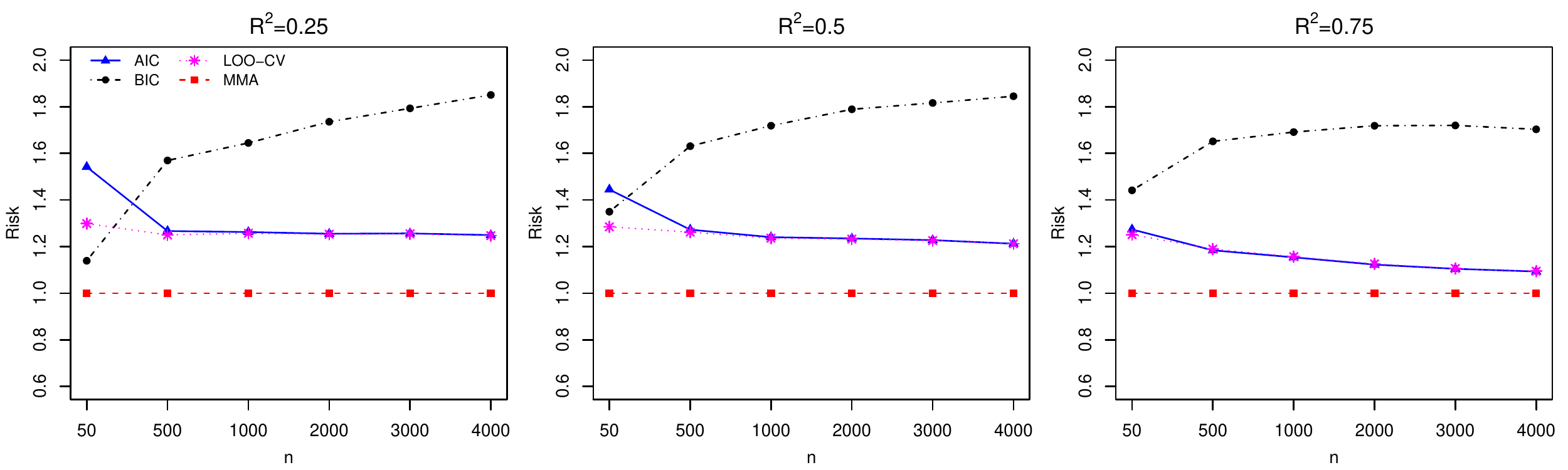}
		\caption{}
	\end{subfigure}
	\begin{subfigure}{\textwidth}\centering
		\includegraphics[scale=0.65]{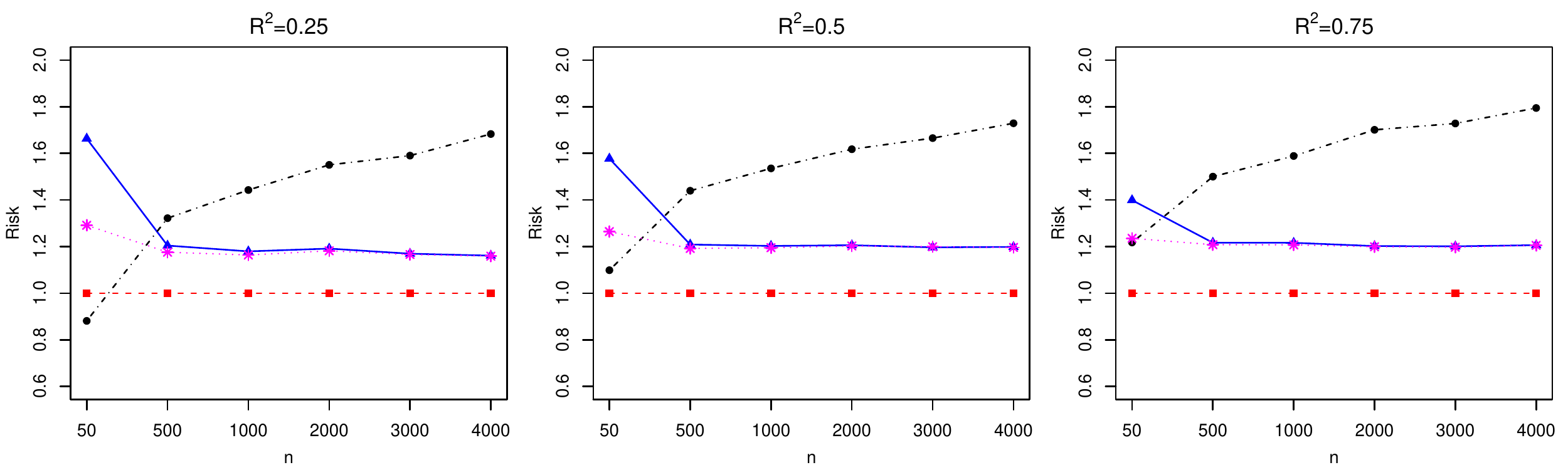}
		\caption{}
	\end{subfigure}
	\begin{subfigure}{\textwidth}\centering
		\includegraphics[scale=0.65]{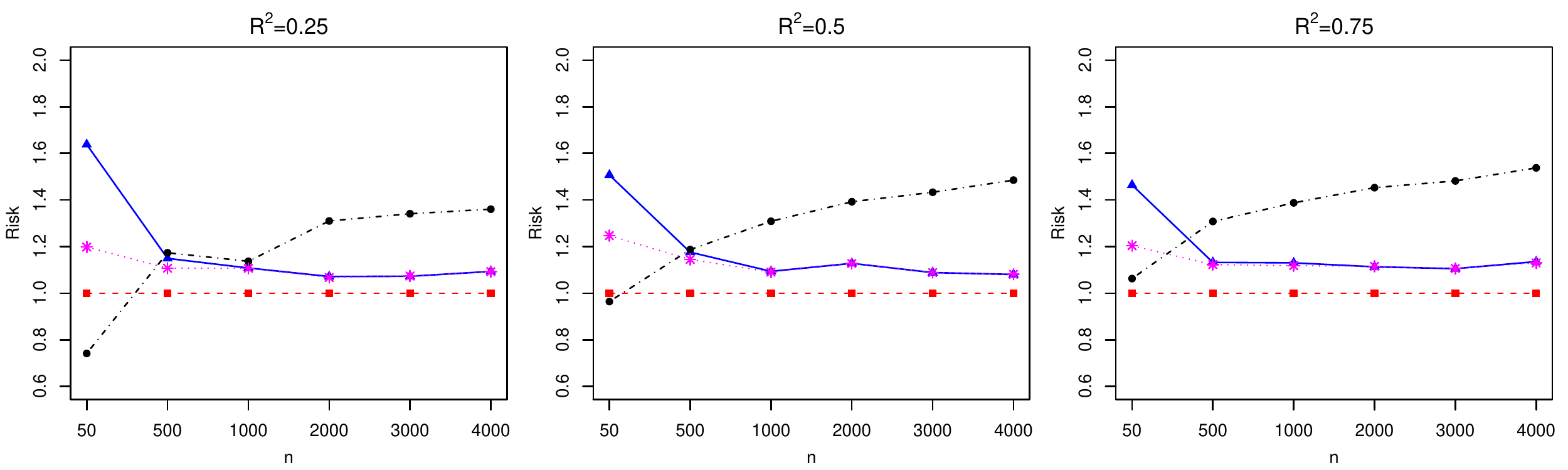}
		\caption{}
	\end{subfigure}
	\caption{Simulation results for Example 3 with the case of slowly decaying $\theta_m^*$. Normalized risk functions for AIC, BIC, LOO-CV, and MMA when $\theta_m^*=m^{-2\alpha_1}/\sigma^2$ with $\alpha_1=1$ in row (a), $\alpha_1=1.5$ in row (b), and $\alpha_1=2$ in row (c).}\label{fig:example3_1}
\end{figure}

\clearpage\pagebreak\newpage

\pagestyle{empty}

\begin{figure}[htpb]
	\centering
	\begin{subfigure}{\textwidth}\centering
		\includegraphics[scale=0.65]{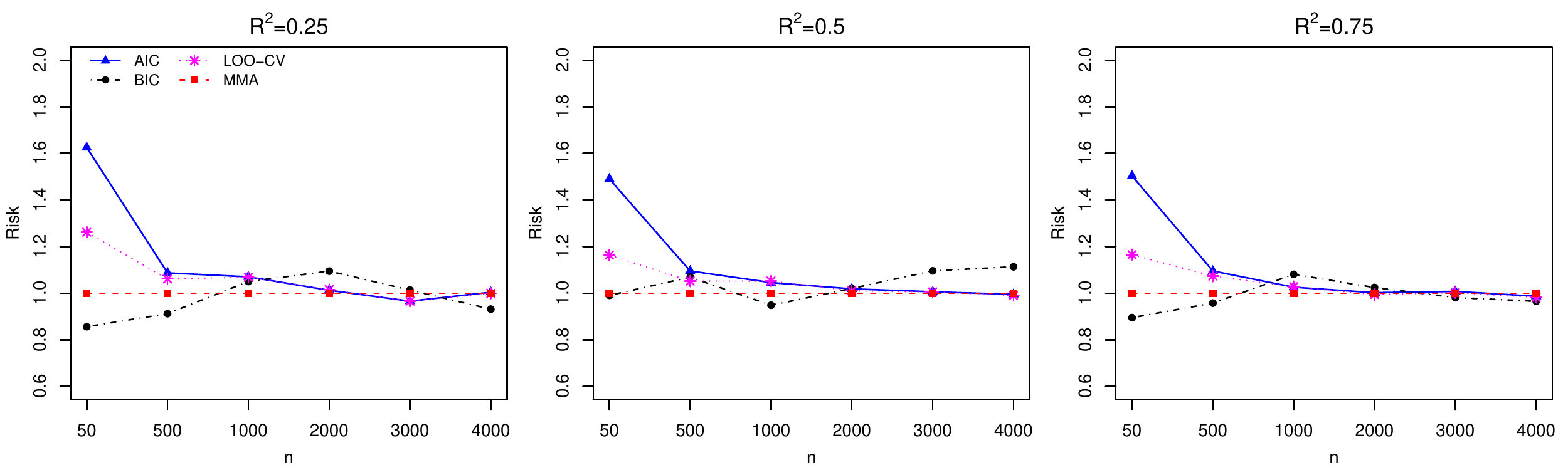}
		\caption{}
	\end{subfigure}
	\begin{subfigure}{\textwidth}\centering
		\includegraphics[scale=0.65]{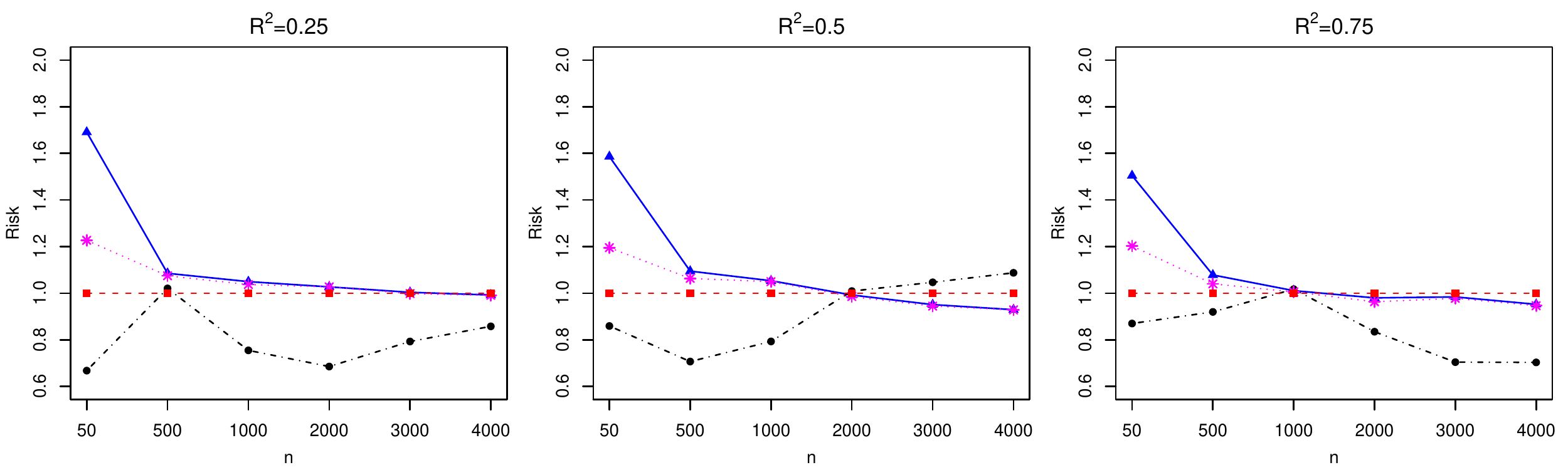}
		\caption{}
	\end{subfigure}
	\begin{subfigure}{\textwidth}\centering
		\includegraphics[scale=0.65]{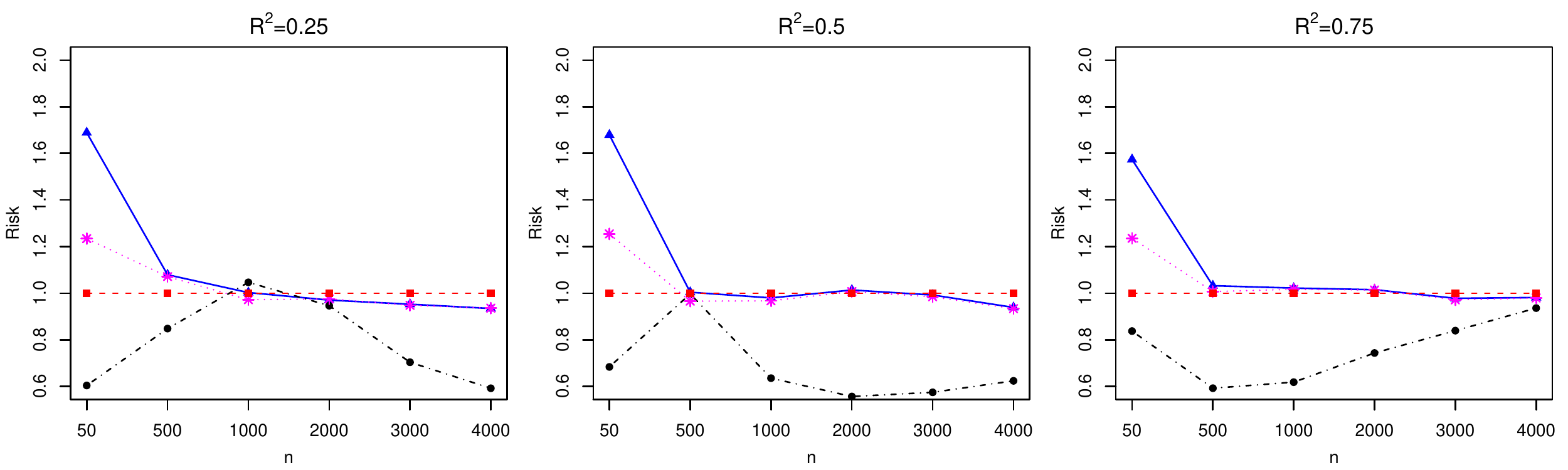}
		\caption{}
	\end{subfigure}
	\caption{Simulation results for Example 3 with the case of fast decaying $\theta_m^*$. Normalized risk functions for AIC, BIC, LOO-CV, and MMA when $\theta_m^*=\exp(-2\alpha_2 m)/\sigma^2$ with $\alpha_2=1$ in row (a), $\alpha_2=1.5$ in row (b), and $\alpha_2=2$ in row (c).}\label{fig:example3_2}
\end{figure}
\end{document}